\newcommand{\xyR}[1]{\xydef@\xymatrixrowsep@{#1}}
\newcommand{\xyC}[1]{\xydef@\xymatrixcolsep@{#1}}
\newtheorem{theorem}{Theorem}[section]
\newtheorem{proposition}[theorem]{Proposition}
\newtheorem{lemma}[theorem]{Lemma}
\newtheorem{corollary}[theorem]{Corollary}
\def\rad{\mathop{\mathrm{rad}}}
\def\im{\mathop{\mathrm{im}}}
\title{$\Sigma$-pure-injectives for gentle algebras} 
\begin{document}
\title[$\Sigma$-pure-injectives in homotopy categories.]{$\Sigma$-pure-injectives in homotopy categories for gentle algebras.}
\author{Raphael Bennett-Tennenhaus.}

\maketitle
\begin{abstract}
We consider the homotopy category of complexes of projective modules over any gentle algebra. We prove that indecomposable $\Sigma$-pure-injective objects in s must be shifts of string or band complexes. We begin with a survey of purity in compactly generated triangulated categories, recalling some characterisations of $\Sigma$-pure-injective objects that mimic classical results from the model theory of modules. We then specify to the aforementioned homotopy category, and describe the compact objects in this case. Our proof uses a recent adaptation of a classification technique known as the functorial filtrations method. One of the key steps in our employment of this method is an interpretation of the appropriate linear relations in terms of pp formulas in a canonical multi-sorted language.
\end{abstract}
\section{Introduction.}\label{intro}
A fundamental idea in model theory is to study structures via the formulas they satisfy. By the famous quantifier elimination result of Baur \cite{Bau1976}, any such formula in the language of modules is equivalent to a Boolean combination of positive-primitive (pp) formulas. Module embeddings which reflect solutions to pp formulas with constants from the domain, so-called pure embeddings, became of particular interest. Attention was also placed on modules which are pure-injective, that is, injective with respect to pure embeddings. 

To study the model theory of modules, Ziegler \cite{Zie1984} defined a topological space whose points are indecomposable pure-injective modules. The introduction of what is now known as the Ziegler spectrum motivated the provision of examples. By using ideas from representation theory, Ringel \cite{Rin1995} gave a combinatorial way of constructing pure-injective modules for string algebras. In particular, just as indecomposable finite-dimensional modules for string algebras may be described using words in the sense of Butler and Ringel \cite{ButRin1987}, infinite versions of these words describe various points of the Ziegler spectrum. 

Prest and Puninski \cite{PrePun2016} then verified a conjecture from \cite{Rin1995}, which says that, for domestic string algebras, the constructions from \cite{Rin1995} give an exhaustive list of indecomposable pure-injective modules. As part of the proof, these authors studied coherent functors from the category of pp-pairs, defined by evaluation. Infinite words have also been used for module classifications by Crawley-Boevey \cite{Cra2018}, who considered string algebras which need not be domestic, and need not be finite-dimensional. More recently, in joint work \cite{BenCra2018} with Crawley-Boevey, we adpted the focus of \cite{Cra2018} to classify $\Sigma$-pure-injective modules: those for which any small direct sum copies of itself remains pure-injective.

Two key ideas were used in \cite{BenCra2018}. The first of these, employed heavilly in \cite{PrePun2016}, was to translate the combinatorial properties defining string algebras into properties of pp-formulas, and then use results from model-theoretic algebra. The second is a classification technique coming from representation theory known as the functorial filtrations method, and goes back to work of Gelfand and Ponomarev \cite{GelPon1968}. This method involves verifying a certain splitting result about linear relations, which was verified for $\Sigma$-pure-injective relations in \cite{BenCra2018}.

In this article we mimic some of the ideas and results above, but in a different categorical context. In particular, we replace module-theoretic notions with their counterparts in the setting of compactly generated triangulated categories. The notion of compactness we refer to originates from algebraic topology, and the axioms were generalised by Neeman \cite{Nee1992} to all triangulated categories. Since then such categories have become of consideredable interest in algebra. From the perspective of model-theoretic algebra, Krause \cite{Kra2002} provided the definitions of pure monomorphisms and pure-injective objects, and Garkusha and Prest \cite{GarPre2005} subsequently introduced a multi-sorted language. 

The definitions and results from  \cite{Kra2002} and \cite{GarPre2005} are analogous to ideas from the model theory of modules. In particular, just as any module is a structure for the language of modules as in  \cite{Bau1976}, objects in compactly generated triangulated categories give rise to structures in the aforementioned multi-sorted language. In more recent work \cite{Ben2020}, the author provided some characterisations of $\Sigma$-pure-injective objects.

By a result of Neeman \cite{Nee2008}, the homotopy category of unbounded complexes of projective modules over any coherent ring is compactly generated. Assem and Skowro{\'n}ski \cite{AssSko1987} studied iterated-tilted algebras of type $\mathbb{A}$, and showed these algebras may be presented using a bounded quiver with relations satisfying various combinatorial properties. The resulting class of finite-dimensional algebras they define are known as gentle algebras. Since finite-dimensional algebras are coherent, the aforementioned homotopy category for these algebras is compactly generated.

By a result of Vossieck \cite{Voss2001}, so-called dervied-discrete algebras are derived equivalent to path algebras of $\mathbb{A}\text{-}\mathbb{D}\text{-}\mathbb{E}$ Dynkin quivers, or gentle algebras of a particular form. Arnesen, Laking, Pauksztello and Prest \cite{ArnLakPauPre2017} studied the latter, and classified pure-injective indecomposable objects in the homotopy category discussed above.

In the author's thesis \cite{Ben2018}, the definition of a gentle algebra from \cite{AssSko1987} was generalised to include examples of infinite-dimensional algebras, and even rings of mixed representation type. Here we called the rings complete gentle algebras, and objects were classified in the full subcategory of complexes with finitely generated homogeneous components. Our main result in this article, Theorem \ref{maincor}, can been seen as an adaptation of the classifications achieved: in \cite{BenCra2018}, where one restricts string algebras to finite-dimensional gentle algebras (as in \cite{AssSko1987}), and extends the module category to the homotopy category; in \cite{ArnLakPauPre2017}, where one extends (non-Dynkin) derived-discrete algebras to gentle algebras, and restricts the focus from pure-injective objects to those which are $\Sigma$-pure-injective; and in \cite{Ben2018}, where one restricts complete gentle algebras to finite-dimensional ones, and replaces complexes with finitely generated homogeneous components with those defining $\Sigma$-pure-injective objects in the homotopy category. 
\begin{theorem}\label{maincor} Let $k$ be a field, $k[T,T^{-1}]$ the ring of Laurent polynomials, and $\Lambda$ be a gentle algebra over $k$. Every $\Sigma$-pure-injective object in the homotopy category $\mathcal{K}(\Lambda\text{-}\bf{Proj})$ of complexes of projective modules is isomorphic to a coproduct of (shifts of) string complexes and band complexes paramenterised by $\Sigma$-pure-injective $k[T,T^{-1}]$-modules. 
\end{theorem}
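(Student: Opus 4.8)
The plan is to run the functorial filtrations method of Gelfand and Ponomarev \cite{GelPon1968}, in the adapted form used by Crawley-Boevey and the author for string-algebra modules \cite{BenCra2018}, directly inside the homotopy category, with the ordinary language of modules replaced by the multi-sorted language of Garkusha and Prest \cite{GarPre2005}. I would fix a $\Sigma$-pure-injective object $X$ of $\mathcal{K}(\Lambda\text{-}\mathbf{Proj})$ and aim to produce the asserted coproduct decomposition of $X$ itself.

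First I would assemble the ambient framework. By Neeman's theorem \cite{Nee2008} the category $\mathcal{K}(\Lambda\text{-}\mathbf{Proj})$ is compactly generated, so the purity of \cite{Kra2002} and the multi-sorted structure of \cite{GarPre2005} apply, and $X$ becomes a structure in that language. The decisive input is the characterisation from \cite{Ben2020}: being $\Sigma$-pure-injective, $X$ satisfies the descending chain condition on pp-definable subgroups of its sorts, which puts $X$ on the same model-theoretic footing as a $\Sigma$-pure-injective module. Since the sorts and pp-formulas are indexed by the compact objects, I would first pin down the explicit description of the compact objects of $\mathcal{K}(\Lambda\text{-}\mathbf{Proj})$ for gentle $\Lambda$, and identify the distinguished compact objects and morphisms that encode the strings and bands of the Butler--Ringel combinatorics \cite{ButRin1987}.

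The core of the argument is then the construction and splitting of the filtrations. For each finite or infinite string, and each band, I would define a pair of subfunctors of the relevant Hom-functors built from the successor and predecessor moves along the letters of a word, as in \cite{GelPon1968, BenCra2018}. The crucial new step, highlighted in the abstract, is to interpret the linear relations governing these moves as pp-formulas in the multi-sorted language: each elementary step along a letter must be realised as a pp-definable relation between sorts of $X$, and must be shown to have the special pp-pair form to which the $\Sigma$-pure-injective splitting result of \cite{BenCra2018} applies. Feeding the descending chain condition from \cite{Ben2020} into this relation-theoretic machinery then forces the string- and band-filtrations of $X$ to split compatibly, and the resulting graded pieces are identified with shifts of string complexes and with band complexes.

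Finally I would organise the band contributions. Each band carries a distinguished automorphism, so the corresponding graded piece of $X$ is naturally a $k[T,T^{-1}]$-module; $\Sigma$-pure-injectivity of $X$ forces this module to be $\Sigma$-pure-injective, while conversely every $\Sigma$-pure-injective $k[T,T^{-1}]$-module arises by decorating a band. Collecting the string pieces with these parameterised band pieces yields the claimed coproduct. I expect the principal obstacle to be exactly the pp-interpretation together with the splitting: one must check that the relations produced by the string combinatorics are genuinely pp-definable in the multi-sorted language, are of pp-pair type, and survive the passage from modules to complexes with enough coherence to drive the Gelfand--Ponomarev induction; the verification of the splitting property for these $\Sigma$-pure-injective relations is where the technical weight of the proof will lie.
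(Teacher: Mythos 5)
Your proposal follows essentially the same route as the paper: pass to homotopically minimal complexes, realise the subspaces cut out by homotopy words as pp-definable subgroups via compact morphisms so that the descending chain condition from \cite{Ben2020} applies, deduce that the linear relations attached to periodic words are $\Sigma$-pure-injective and hence split with $\Sigma$-pure-injective $k[T,T^{-1}]$-module quotient by \cite{BenCra2018}, and then run the functorial filtrations comparison to exhibit the coproduct of shifts of string complexes and parameterised band complexes. The only ingredient you leave implicit is the closing covering step (the paper's Lemmas \ref{lemma.7.5} and \ref{lemma.7.1-1}) showing the resulting comparison morphism is degreewise bijective, but that is the standard final move of the method.
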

The article is organised as follows. In \S\ref{1} we recall the notion of a compactly generated triangulated category and recall some charaterisations of $\Sigma$-pure-injective objects in this setting.  In \S\ref{2} we recall the definition of a gentle algebra and the definitions of string and band complexes. In \S\ref{cmpctgentle} we describe compact indecomposable objects in the homotopy categoy. In \S\ref{linrels} we recall the category of linear relations, survey results about pure-injective relations and show how so-called homotopy words define such relations. In \S\ref{seccompact} we introduce the notion of a compact morphism associated to a finite homotopy word. In \S\ref{splitsection} we study the relationship between $\Sigma$-pure-injective complexes and $\Sigma$-pure-injective relations. In \S\ref{6} we set up the prerequisites needed to employ the functorial filtrations method. In \S\ref{seccover} we verify some compatbility conditions required to use this method. In \S\ref{sec:Proofs-of-the} we give a proof of Theorem \ref{maincor}.
\section{Compactly generated triangulated categories.}\label{1}
\begin{definition}\label{compactob}\cite{Nee1992,Nee1996} Let $\mathcal{T}$ be a skeletally small triangulated category, with suspension functor $\Sigma$, and in which all small coproducts exist. An object $X$ of $\mathcal{T}$ is said to be \textit{compact} if, for any set $I$ and collection $\mathrm{M}=\{M_{i}\mid i\in I\}$ of objects in $\mathcal{T}$, the morphism $\bigoplus_{i}\mathcal{T}(X,M_{i})\to\mathcal{T}(X,\bigoplus_{i}M_{i})$, given by the universal property of the coproduct, is an isomorphism. Let $\mathcal{T}^{c}$ be the full triangulated subcategory of $\mathcal{T}$ consisting of compact objects.

Given a set $\mathcal{G}$ of compact objects in $\mathcal{T}$, we say that $\mathcal{T}$ is \textit{compactly generated by} $\mathcal{G}$ if there  are no non-zero objects $M$ in $\mathcal{T}$ satisfying $\mathcal{T}(X,M)=0$ for all $X\in\mathcal{G}$ (or, said another way, any non-zero object $M$ gives rise to a non-zero morphism $X\to M$ for some $X\in\mathcal{G}$).

Let $\Sigma$ denote the suspension functor associated to the triangulated structure of $\mathcal{T}$. If $\mathcal{T}$ is compactly generated by $\mathcal{G}$ we call $\mathcal{G}$ a \textit{generating set} provided $\Sigma X\in \mathcal{G}$ for all $X\in\mathcal{G}$. 
\end{definition}
As discussed in \S\ref{intro}, the examples of compactly generated triangulated categories we focus on will be homotopy categories for gentle algebras. For this we fix notation for the sequel. 
\begin{notation}Let $\Lambda$ be a unital ring. In the remainder of the article write $\mathcal{C}(\Lambda\text{-}\mathbf{Proj})$ for the category of complexes $X$ of projective $\Lambda$-modules, denoted
\[
X=\xymatrix{
\cdots\ar[r]^{d_{X}^{i-2}} & X^{i-1}\ar[r]^{d_{X}^{i-1}} & X^{i}\ar[r]^{d_{X}^{i}} & X^{i+1}\ar[r]^{d_{X}^{i+1}} & \cdots}
\]
As usual, we consider various full subcategories of $\mathcal{C}(\Lambda\text{-}\mathbf{Proj})$, defined and denoted as follows. A complex $X$ in $\mathcal{C}(\Lambda\text{-}\mathbf{Proj})$ lies in:
\begin{enumerate}
\item $\mathcal{C}(\Lambda\text{-}\mathbf{proj})$ if and only if each $X^{i}$ is a finitely generated (projective) $\Lambda$-module; 
\item the category $\mathcal{C}^{+}(\Lambda\text{-}\mathbf{proj})$ of \textit{left}-\textit{bounded} complexes if and only if $X$ lies in $\mathcal{C}(\Lambda\text{-}\mathbf{proj})$ and there exists $n_{X}\in\mathbb{Z}$ with $X^{i}=0$ whenever $i< n_{X}$;
\item the category $\mathcal{C}^{-}(\Lambda\text{-}\mathbf{proj})$ of \textit{right}-\textit{bounded} complexes if and only if  $X$ lies in $\mathcal{C}(\Lambda\text{-}\mathbf{proj})$ and there exists $m_{X}\in\mathbb{Z}$ with $X^{i}=0$ whenever $i>m_{X}$;
\item the subcategory $\mathcal{C}^{\pm,b}(\Lambda\text{-}\mathbf{proj})$ of $\mathcal{C}^{\pm}(\Lambda\text{-}\mathbf{proj})$ consisting of complexes with \textit{bounded cohomology} if and only if ($X$ lies in $\mathcal{C}^{\pm}(\Lambda\text{-}\mathbf{proj})$ and) there exists $h_{X}\in\mathbb{Z}$ with $\mathrm{im}(d_{X}^{i})=\mathrm{ker}(d_{X}^{i+1})$ whenever $i\notin[-h_{X},h_{X}]$.
\end{enumerate}
We denote the homotopy category associated to $\mathcal{C}(\Lambda\text{-}\mathbf{Proj})$ (respectively $\mathcal{C}(\Lambda\text{-}\mathbf{proj})$, respectively $\mathcal{C}^{\pm}(\Lambda\text{-}\mathbf{proj})$, respectively $\mathcal{C}^{\pm,b}(\Lambda\text{-}\mathbf{proj})$) by $\mathcal{K}(\Lambda\text{-}\mathbf{Proj})$ (respectively $\mathcal{K}(\Lambda\text{-}\mathbf{proj})$, respectively $\mathcal{K}^{\pm}(\Lambda\text{-}\mathbf{proj})$, respectively $\mathcal{K}^{\pm,b}(\Lambda\text{-}\mathbf{proj})$).
\end{notation}

Theorem \ref{neemantheorem} summarises some details of a generalisation, due to Neeman \cite[Theorem]{Nee2008}, of a result due to J\o rgenson \cite[Theorem 3.2]{Jor2005}.

\begin{theorem}\label{neemantheorem}\emph{\cite[Proposition 7.12]{Nee2008}} Let $\Lambda$ be a unital ring. Then the full subcategory $\mathcal{K}(\Lambda\text{-}\mathbf{Proj})^{c}$ of $\mathcal{K}(\Lambda\text{-}\mathbf{Proj})$ consisting of compact objects is precisely $\mathcal{K}^{+,b}(\Lambda\text{-}\mathbf{proj})$.

\emph{\cite[Theorem 1.1(i)]{Nee2008}} If, also, $\Lambda$ is right coherent, then $\mathcal{K}(\Lambda\text{-}\mathbf{Proj})$ is compactly generated.
\end{theorem}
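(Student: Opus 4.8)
The plan is to prove the two assertions separately: the identification of the compact objects with $\mathcal{K}^{+,b}(\Lambda\text{-}\mathbf{proj})$, which I would obtain by establishing both inclusions, and then the compact generation under right coherence, for which it suffices to exhibit a set of compact objects whose right orthogonal is zero. The computation underlying everything is that, for a finitely generated projective $P$ regarded as a complex concentrated in a single degree, there is a natural isomorphism $\mathcal{K}(\Lambda\text{-}\mathbf{Proj})(P,M)\cong\hom_{\Lambda}(P,H^{0}(M))$: a chain map out of $P$ is the same as a map from $P$ into the cocycles $\ker(d_{M}^{0})$, and, since $P$ is projective, the null-homotopic ones are exactly those landing in the coboundaries.

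For the inclusion $\mathcal{K}^{+,b}(\Lambda\text{-}\mathbf{proj})\subseteq\mathcal{K}(\Lambda\text{-}\mathbf{Proj})^{c}$ I would argue in three steps. Because $\hom_{\Lambda}(P,-)$ preserves coproducts for $P$ finitely generated, and cohomology preserves coproducts, the displayed isomorphism shows every stalk $P$, and hence every shift $\Sigma^{n}P$, is compact. As $\mathcal{K}(\Lambda\text{-}\mathbf{Proj})^{c}$ is thick and every bounded complex of finitely generated projectives is an iterated mapping cone of finitely many stalks, every such complex is compact. It remains to treat a general $X$ in $\mathcal{K}^{+,b}(\Lambda\text{-}\mathbf{proj})$, which is left-bounded with an exact tail of finitely generated projectives in high degrees. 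Given a chain map $X\to\bigoplus_{i}M_{i}$, I would use exactness of $X$ in high degrees together with projectivity of its terms to modify the map by a homotopy so that its components vanish above some fixed degree; the modified map is then supported in finitely many degrees, where $X$ has finitely generated terms, and therefore factors through a finite subcoproduct. This is precisely the compactness of $X$.

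For the reverse inclusion $\mathcal{K}(\Lambda\text{-}\mathbf{Proj})^{c}\subseteq\mathcal{K}^{+,b}(\Lambda\text{-}\mathbf{proj})$ I would use compactness of a given $X$ to control its homotopy type, the point being that compactness of $\mathcal{K}(X,-)$ obstructs the various sources of infiniteness in $X$: a complex which is not left-bounded, or not finitely generated, or of unbounded cohomology should admit a map to a coproduct that factors through no finite subcoproduct. Making this precise is the more delicate half, since one must pin down the homotopy type of $X$ rather than an additive invariant; I would follow Neeman's analysis, expressing the relevant hom-groups as filtered colimits over finite approximations to $X$ and using that $\mathrm{id}_{X}$ factors through a finite stage to replace $X$, up to homotopy equivalence, by a left-bounded complex of finitely generated projectives with bounded cohomology.

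Finally, for compact generation under right coherence I would take $\mathcal{G}$ to be a set of representatives of the compact objects above, including all shifts of the stalk $\Lambda$. If $M$ satisfies $\mathcal{K}(\Lambda\text{-}\mathbf{Proj})(X,M)=0$ for every $X\in\mathcal{G}$, then orthogonality to these shifts already gives $\mathcal{K}(\Lambda\text{-}\mathbf{Proj})(\Sigma^{-n}\Lambda,M)\cong H^{n}(M)=0$ for all $n$, so $M$ is acyclic. The crux, and the step I expect to be the main obstacle, is to deduce that an acyclic complex of projectives which is right orthogonal to every compact object must be contractible, and hence zero in $\mathcal{K}(\Lambda\text{-}\mathbf{Proj})$. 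This is exactly where right coherence enters: it ensures that finitely presented modules have resolutions by finitely generated projectives and that the attendant flat--cotorsion machinery is well enough behaved to manufacture, from a non-split cocycle of $M$, a nonzero map out of a suitable compact object. This detection of non-contractibility is the technical heart of the result, generalising J\o rgensen's theorem, and is where I would expect to concentrate the work.
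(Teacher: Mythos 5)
This theorem is imported verbatim from Neeman \cite{Nee2008} (building on J\o rgensen \cite{Jor2005}); the paper supplies no proof of its own, so your sketch can only be measured against the cited arguments. The easy parts of your outline are correct: the isomorphism $\mathcal{K}(\Lambda\text{-}\mathbf{Proj})(P,M)\cong\hom_{\Lambda}(P,H^{0}(M))$ for a stalk of a finitely generated projective, the resulting compactness of stalks and (by thickness) of bounded complexes of finitely generated projectives, and the observation that orthogonality to the shifts of $\Lambda$ forces acyclicity.

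The genuine gap is the step carrying the inclusion $\mathcal{K}^{+,b}(\Lambda\text{-}\mathbf{proj})\subseteq\mathcal{K}(\Lambda\text{-}\mathbf{Proj})^{c}$: it is in general impossible to homotope a chain map out of such an $X$ so that its components vanish above some fixed degree. Take $\Lambda=k[x]/(x^{2})$ and $X=(0\to\Lambda\xrightarrow{\cdot x}\Lambda\xrightarrow{\cdot x}\cdots)$, which lies in $\mathcal{K}^{+,b}(\Lambda\text{-}\mathbf{proj})$. If $\mathrm{id}_{X}$ were homotopic to a map vanishing in degrees $>m$, then the inclusion into $X$ of the subcomplex of terms in degrees $>m$ would be null-homotopic; writing the required homotopy components as right multiplications by elements $c_{j}$, this demands $c_{j}x+xc_{j+1}=1$, which is impossible since the left side lies in the ideal $(x)$. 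So the exact tail cannot be stripped off by a homotopy, and the mechanism you propose fails already for this single target (hence cannot establish compactness against genuinely infinite coproducts). The actual proofs proceed differently: J\o rgensen dualises via $\mathrm{Hom}_{\Lambda}(-,\Lambda)$ to identify the compacts with $\mathcal{K}^{-,b}(\mathbf{proj}\text{-}\Lambda)$, where compactness is classical, while Neeman's general argument runs through well-generatedness and orthogonal decompositions inside the homotopy category of flat modules. Your reverse inclusion and the contractibility of an acyclic complex of projectives orthogonal to all compacts are likewise left as placeholders (``follow Neeman's analysis'', ``flat--cotorsion machinery''); these are the technical heart of \cite{Nee2008} and are not supplied. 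Note finally that the remark following Corollary \ref{compactcor} records the shortcut actually available in this paper: $\Lambda$ is finite dimensional, hence perfect, so flat modules are projective (Bass, Lazard) and J\o rgensen's hypotheses already apply, making the full strength of Neeman's theorem unnecessary here.
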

\begin{assumption}\label{ass22}In the remainder of \S\ref{1} fix a triangulated category $\mathcal{T}$ with suspension functor $\Sigma$, and we assume that $\mathcal{T}$ is skeletally small,
that $\mathcal{T}$ has all small coproducts, and that $\mathcal{T}$ is compactly generated. Additionally, we fix a set $\mathcal{S}$ of objects in $\mathcal{T}^{c}$ consisting of exactly one representative for each isoclass.
\end{assumption}
\begin{definition}\label{ppdefsub}\cite[\S 2]{GarPre2005} Let $M$ be an object of $\mathcal{T}$. For any morphism $a:X\to Y$ in $\mathcal{T}^{c}$  let $\mathcal{T}(a,M):\mathcal{T}(Y, M)\to \mathcal{T}(X, M)$ be the morphism given by $f\mapsto fa$, and let
\[
Ma=\mathrm{im}(\mathcal{T}(a,M))=\{fa\in \mathcal{T}(X, M)\mid f\in\mathcal{T}(Y, M)\}
\]
For any compact object $X$, by a \textit{pp}-\textit{definable subgroup of} $M$ \textit{of sort} $X$ we mean a subgroup of $\mathcal{T}(X,M)$ of the form $Ma$ for some (object $Y$ of $\mathcal{T}^{c}$ and some morphism) $a$ as above. 
\end{definition}
In Corollary \ref{compactcor} we directly combine  Theorem \ref{neemantheorem} and Definition \ref{ppdefsub}.
\begin{definition}\cite[Definition 1.1]{Kra2002} A morphism $h:L\to M$ in $\mathcal{T}$ is called a \textit{pure monomorphism} if $\mathbf{Y}(h)_{X}:\mathcal{T}(X,L)\to\mathcal{T}(X,M)$ is injective for each object $X$ of $\mathcal{T}^{c}$. An object $M$ of $\mathcal{T}$ is called \textit{pure}-\textit{injective} if each pure monomorphism $M\to N$ is a section, and $M$ is called $\Sigma$-\textit{pure}-\textit{injective} if, for any set $I$, the coproduct $\bigoplus_{i}M$ is pure-injective.
\end{definition}
\begin{remark}\label{brownproducts}
As a result of Assumption \ref{ass22}, by the \textit{Brown representability theorem} we have that $\mathcal{T}$ has all small products. See \cite[Lemma 1.5]{Kra2000} for details.
\end{remark}
The full statement of Theorem \ref{characterisation}, namely \cite[Theorem 1.1]{Ben2020}, is analogous to the summary in a book of Jensen and Lenzing  \cite[Theorem 8.1]{JenLen1989} of various well-known characterisations of $\Sigma$-pure-injective modules. 
\begin{theorem}\label{characterisation}\emph{\cite[Theorem 1.1(i,iv,v,vi)]{Ben2020}}
Let $M$ be an object of $\mathcal{T}$. Then the following statements are equivalent.
\begin{enumerate}
\item $M$ is $\Sigma$-pure injective.
\item For any set $I$ the canonical morphism from $M^{(I)}$ to the product $M^{I}$ is a section.
\item For any object $X$ of $\mathcal{T}^{c}$ each descending chain $\varphi_{1}(M)\supseteq \varphi_{2}(M)\supseteq\dots$  of pp-definable subgroups of $M$ of sort $X$ must eventually stabilise.
\item $M$ is pure injective, and for any set $I$ the object $M^{I}$ is isomorphic to a coproduct of indecomposable pure-injective objects with local endomorphism rings.
\end{enumerate}
\end{theorem}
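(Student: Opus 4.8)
The plan is to prove the four conditions equivalent as a cycle $(1)\Rightarrow(2)\Rightarrow(3)\Rightarrow(1)$, and then to fold in condition $(4)$ through the implications $(3)\Rightarrow(4)\Rightarrow(1)$, following the classical template for $\Sigma$-pure-injective modules summarised in \cite[Theorem 8.1]{JenLen1989}. The observation that makes the translation to the present setting work is that, for a fixed compact object $X$, the functor $\mathcal{T}(X,-)$ both preserves products (being representable, and products exist by Remark \ref{brownproducts}) and, by compactness (Definition \ref{compactob}), preserves coproducts. Consequently, for any morphism $a\colon X\to Y$ in $\mathcal{T}^{c}$ and any set $I$, the pp-definable subgroup of sort $X$ cut out by $a$ satisfies $(M^{I})a=(Ma)^{I}$ inside $\mathcal{T}(X,M)^{I}$ and $(M^{(I)})a=(Ma)^{(I)}$ inside $\mathcal{T}(X,M)^{(I)}$. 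This identifies pp-definable subgroups in the multi-sorted language of \cite{GarPre2005} with ordinary pp-definable subgroups of the abelian groups $\mathcal{T}(X,M)$, so that every step of the module-theoretic argument acquires a direct analogue.

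For $(1)\Rightarrow(2)$ I would note that, on each sort $X$, the canonical morphism $M^{(I)}\to M^{I}$ induces the inclusion $\mathcal{T}(X,M)^{(I)}\hookrightarrow\mathcal{T}(X,M)^{I}$, which is injective and respects all pp-definable subgroups by the displayed identities; hence it is a pure monomorphism in the sense of \cite[Definition 1.1]{Kra2002}. Since $M$ is $\Sigma$-pure-injective, $M^{(I)}$ is pure-injective, so this pure monomorphism is a section. For $(2)\Rightarrow(3)$ I would argue by contraposition. Given a strictly descending chain $\varphi_{1}(M)\supsetneq\varphi_{2}(M)\supsetneq\cdots$ of pp-definable subgroups of sort $X$, choose $x_{n}\in\varphi_{n}(M)\setminus\varphi_{n+1}(M)$ and assemble $x=(x_{n})_{n}\in\mathcal{T}(X,M)^{\mathbb{N}}=\mathcal{T}(X,M^{\mathbb{N}})$. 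Writing $x^{(m)}$ for the tail $(0,\dots,0,x_{m},x_{m+1},\dots)$, one has $x^{(m)}\in\varphi_{m}(M^{\mathbb{N}})$ while $x-x^{(m)}$ lies in the coproduct. A retraction of the canonical map fixes the coproduct and preserves pp-definable subgroups, so its value on $x$ has finite support yet must agree with $x$ in cofinally many coordinates modulo the strict filtration; the standard Garavaglia diagonal argument then forces a contradiction.

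The crux of the theorem is $(3)\Rightarrow(1)$. First I would show that the descending chain condition on pp-definable subgroups of each sort is inherited by every coproduct $M^{(I)}$: using $\varphi(M^{(I)})=\varphi(M)^{(I)}$, a strictly descending chain of pp-definable subgroups of $M^{(I)}$ would descend to one of $M$, contradicting $(3)$. This reduces the implication to showing that the chain condition forces pure-injectivity. For this I would transport the model-theoretic proof that a structure with the descending chain condition on pp-definable subgroups is algebraically compact: the chain condition makes every pp-type over $M$ equivalent to one of its finite subtypes, and such a finitely generated, finitely satisfiable pp-type is then realised in $M$, yielding algebraic compactness and hence pure-injectivity. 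This is the most delicate step, since it is exactly where one must check that the representable-functor formalism of \cite{GarPre2005} supports the same type-theoretic manipulations as the language of modules; I expect this to be the main obstacle, and the part that genuinely relies on the machinery of \cite{Ben2020}.

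Finally, to incorporate $(4)$, for $(3)\Rightarrow(4)$ I would observe that $M$ is pure-injective by $(3)\Rightarrow(1)$, and that the chain condition is inherited by the product $M^{I}$ exactly as for coproducts; a pure-injective object whose pp-definable subgroups satisfy the descending chain condition decomposes as a coproduct of indecomposable pure-injectives with local endomorphism rings, by the structure theory of pure-injective objects in compactly generated triangulated categories (the analogue of the Azumaya--Krull--Remak--Schmidt theorem, with idempotents lifting modulo the radical of the endomorphism ring). For $(4)\Rightarrow(1)$ I would use that, for every $I$, such a decomposition of $M^{I}$ forces the canonical morphism $M^{(I)}\to M^{I}$ to be a section, since a pure monomorphism into a coproduct of indecomposable pure-injectives splits; this returns condition $(2)$ and closes the cycle. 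The routine work I would defer consists of the commutation identities for pp-subgroups and the bookkeeping in the Garavaglia argument; the conceptual weight sits entirely in $(3)\Rightarrow(1)$.
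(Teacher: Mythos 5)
You should know at the outset that the paper contains no proof of Theorem \ref{characterisation} to compare against: it is quoted wholesale from \cite[Theorem 1.1]{Ben2020}, so your attempt has to stand on its own. The parts of it that do stand are the preliminary identities and the first two implications. Since $\mathcal{T}(X,-)$ preserves products (always) and coproducts (by compactness of $X$), the identities $(M^{I})a=(Ma)^{I}$ and $(M^{(I)})a=(Ma)^{(I)}$ are correct, and with them your $(1)\Rightarrow(2)$ (the canonical map is a pure monomorphism out of a pure-injective object) and your $(2)\Rightarrow(3)$ (the Garavaglia argument: any retraction fixes finite-support elements and preserves pp-definable subgroups, so for a coordinate $n$ beyond the support of the retraction's value one gets $x_{n}\in\varphi_{n+1}(M)$, a contradiction) both go through in this setting.

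The rest has genuine gaps, and they sit exactly where the content of the theorem lies. For $(3)\Rightarrow(1)$ you reduce to ``DCC implies pure-injectivity'' and then appeal to ``the machinery of \cite{Ben2020}'' --- but \cite[Theorem 1.1]{Ben2020} \emph{is} the statement being proved, so as written this is circular; what is missing is an independent bridge between Krause's categorical definition of pure-injectivity \cite{Kra2002} and realisation of finitely satisfiable pp-types in the multi-sorted language of \cite{GarPre2005}. The standard non-circular route is the restricted Yoneda functor $\mathcal{T}\to\mathrm{Mod}\text{-}\mathcal{T}^{c}$, which preserves products and coproducts, identifies $Ma$ with the image of $(\mathbf{y}M)(a)$, and identifies pure-injective objects of $\mathcal{T}$ with injective objects of the functor category; this reduces the whole theorem to the many-sorted version of \cite[Theorem 8.1]{JenLen1989}. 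Worse, your $(4)\Rightarrow(1)$ rests on the principle that a pure monomorphism into a coproduct of indecomposable pure-injectives splits, and this is false already for modules, the very template you invoke: each $\mathbb{Z}/p^{n}$ is an indecomposable pure-injective with local endomorphism ring, yet $B=\bigoplus_{n\geq 1}\mathbb{Z}/p^{n}$ contains the standard pure subgroup $H$ generated by the elements $a_{n}-pa_{n+1}$ (where $a_{n}$ generates the summand of order $p^{n}$), which is not a summand because $B/H\cong\mathbb{Z}(p^{\infty})$ is divisible while $B$ is reduced. Classically $(4)\Rightarrow(1)$ is one of the hard implications (Gruson--Jensen, Zimmermann--Huisgen--Zimmermann) and proceeds by a pp-type/pigeonhole argument on a large power of $M$, not by a splitting principle; and your $(3)\Rightarrow(4)$ likewise defers an Azumaya-type decomposition theorem that is true here but only via the same functor-category machinery you have not set up. So what you have actually established is $(1)\Rightarrow(2)\Rightarrow(3)$ only.
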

The characterisations \cite[Theorem 1.1(i,ii,iii)]{Ben2020}, which were omitted in Theorem \ref{characterisation}, are not used in the sequel. To state Corollary \ref{sigmacardy} we need to recall a notion of cardinality. As discussed in \S\ref{intro}, Garkusha and Prest \cite{GarPre2005} introduced a multi-sorted \textit{canonical language} associated to the category $\mathcal{T}$ using the restricted of Yoneda functor. Objects in $\mathcal{T}$ give rise to structures in this language, and Definition \ref{defcardy} is the specification of the notion of the cardinality of a multi-sorted structure to the setting of compactly generated triangulated categories.
\begin{definition}\label{defcardy}
Let $M$ be an object in $\mathcal{T}$. The \textit{cardinality of the structure} $\mathsf{M}$ \textit{underlying} $M$ is defined and denoted $\vert\mathsf{M}\vert=\vert\bigsqcup_{G\in\mathcal{S}}\mathcal{T}(G,M)\vert$.
\end{definition}
We note two more results from \cite{Ben2020} to be employed later; see Lemma \ref{cmpctgen} and Remark \ref{lastremark}.
\begin{corollary}\label{sigmacardy}\emph{\cite[Corollary 7.1]{Ben2020}} There exists a cardinal $\kappa$ such that the cardinality of the structure underlying any indecomposable pure-injective object of $\mathcal{T}$ is at most $\kappa$.
\end{corollary}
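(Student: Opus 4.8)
The plan is to transport to $\mathcal{T}$ the classical bound of Ziegler \cite{Zie1984} on the cardinality of an indecomposable pure-injective module, exploiting that every ingredient is parametrised by the skeletally small category $\mathcal{T}^{c}$. By Definition \ref{defcardy} the quantity to bound is $\vert\bigsqcup_{G\in\mathcal{S}}\mathcal{T}(G,M)\vert$, uniformly over all indecomposable pure-injective $M$. Since $\mathcal{S}$ is a fixed set (Assumption \ref{ass22}), it is enough to find a cardinal $\kappa_{0}$, independent of $M$ and of $G$, with $\vert\mathcal{T}(G,M)\vert\leq\kappa_{0}$ for all $G\in\mathcal{S}$; one may then take $\kappa=\vert\mathcal{S}\vert\cdot\kappa_{0}$.

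First I would fix the size of the canonical language. As $\mathcal{T}^{c}$ is skeletally small, the morphisms $a:X\to Y$ between compact objects form, up to isomorphism, a set; let $\nu$ be an infinite cardinal bounding its size. By Definition \ref{ppdefsub} every pp-definable subgroup of $M$ is the image of some $\mathcal{T}(a,M)$, and since finite coproducts of compact objects are compact, the pp-definable subgroups governing a pair of free variables of sorts $G,G'$ are those of sort $G\oplus G'$, of which there are at most $\nu$. Hence there are at most $2^{\nu}$ pp-types in two such variables, where a pp-type is the set of pp-definable subgroups that a given tuple lies in. The cardinal $2^{\nu}$ depends only on $\mathcal{T}$.

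Next I would realise $M$ as a pure-injective hull of one element. Fix any nonzero $x\in\mathcal{T}(G_{0},M)$. The pure-injective hull $H(x)$ exists in $\mathcal{T}$; as $M$ is pure-injective and $H(x)$ is a pure-essential extension of the pure subobject generated by $x$, the inclusion of that subobject into $M$ extends to a pure monomorphism $H(x)\to M$, which is then a section because $H(x)$ is pure-injective. Thus $H(x)$ is a direct summand of $M$, and since $M$ is indecomposable (equivalently has local endomorphism ring, as in \cite{Kra2002}) with $H(x)\neq 0$, we get $M\cong H(x)$. Mimicking the module case, pure-essentiality of $H(x)$ over $x$ forces every morphism $y\in\mathcal{T}(G,M)$ to be determined by the pp-type of the pair $(y,x)$ in sorts $G$ and $G_{0}$. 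The assignment $y\mapsto\mathrm{pp}(y,x)$ is therefore injective, so $\vert\mathcal{T}(G,M)\vert\leq 2^{\nu}$; taking $\kappa_{0}=2^{\nu}$ and $\kappa=\vert\mathcal{S}\vert\cdot 2^{\nu}$ finishes the argument.

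The main obstacle is the penultimate claim: that in $M\cong H(x)$ each element is pinned down by its pp-type over the single seed $x$. This is precisely where the theory of pure-essential extensions must be carried from modules to the multi-sorted canonical language of \cite{GarPre2005}. Concretely I expect to need (a) the existence and pure-essentiality of the pure-injective hull of an element of $\mathcal{T}$, for which the products furnished by Remark \ref{brownproducts} together with the pure-injectivity machinery of \cite{Kra2002} are the natural inputs; and (b) that two morphisms $G\to M$ sharing a pp-type over $x$ must coincide, which rests on pure-essentiality, the point being that any parameter from the pure closure of $x$ can be absorbed into a pp-formula in the two variables $(y,x)$. Once this input is secured, the cardinal arithmetic and the reduction to a single sort are routine.
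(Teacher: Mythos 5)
There is a genuine gap, and it is exactly at the step you yourself flag as the crux, namely claim (b): that a morphism $y\in\mathcal{T}(G,M)$ is determined by the pp-type of the pair $(y,x)$ over the seed $x$. This is false already in the module prototype you are mimicking. Take the ring $\mathbb{Z}$, let $M=\mathbb{Z}_{p^{\infty}}$ (an indecomposable pure-injective, indeed injective, module), let $x=a$ be an element of order $p$, and let $b$ satisfy $pb=a$. Multiplication by the $p$-adic unit $1+p$ is an automorphism of $M$; it fixes $a$ (since $(1+p)a=a+pa=a$) and sends $b$ to $b+a\neq b$. Hence $(b,a)$ and $(b+a,a)$ satisfy exactly the same pp formulas, yet $b\neq b+a$, so the assignment $y\mapsto\mathrm{pp}(y,x)$ is not injective. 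What pure-essentiality of the hull over $x$ actually gives is that two elements with the same pp-type over $x$ are conjugate under an automorphism fixing $x$ --- not that they coincide --- and automorphism orbits carry no a priori cardinality bound, so $\vert\mathcal{T}(G,M)\vert\leq 2^{\nu}$ does not follow. The classical Ziegler--Prest bound for modules is proved along a different route: one realises the indecomposable pure-injective as the hull of a \emph{small} object (cut out by a L\"owenheim--Skolem argument, or as a quotient of a representable after passing to the functor category) and then bounds the cardinality of the hull of a small object, e.g.\ via double duals or via essential extensions in a Grothendieck category. Neither step is the pp-type count you propose.

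A secondary issue is that ``the pure subobject generated by $x$'' and ``pure-essential extension'' have no direct meaning inside $\mathcal{T}$, which has no kernels or images; the standard remedy is Krause's restricted Yoneda functor into the category of additive functors on $\mathcal{T}^{c}$, under which pure-injective objects of $\mathcal{T}$ correspond to injective objects and indecomposables to indecomposables \cite{Kra2002}, whereupon the statement becomes a bound on indecomposable injectives in a locally finitely presented Grothendieck category. Note finally that the paper offers no internal proof to compare against: the corollary is quoted verbatim from \cite[Corollary 7.1]{Ben2020}, so your attempt is a reproof of that external result, and as it stands it fails at the injectivity claim above.
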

\begin{corollary}\emph{(}See \emph{\cite[Corollary 7.6]{Ben2020}).}\label{decompcorr} Let $\mathrm{L}=\{L_{j}\mid j\in J\}$ and $\mathrm{M}=\{M_{k}\mid j\in J\}$ be collections of indecomposable objects in $\mathcal{T}$ such that the coproducts $\bigoplus _{j\in J}L_{j}$ and $\bigoplus _{k\in K}M_{k}$ are pure injective, and such that $\bigoplus _{j\in J}L_{j}\simeq \bigoplus _{k\in K}M_{k}$.  Then there is a bijection $\sigma:J\to K$ with $L_{j}\simeq M_{\sigma(j)}$ for all $j\in J$.
\end{corollary}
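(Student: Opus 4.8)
The plan is to move the whole problem into the functor category attached to $\mathcal{T}^{c}$, where coproduct decompositions of injective objects are classical, and then transport the resulting bijection back. Write $N=\bigoplus_{j\in J}L_{j}\simeq\bigoplus_{k\in K}M_{k}$.

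The key tool is the restricted Yoneda functor $\mathbf{Y}\colon\mathcal{T}\to\mathrm{Mod}\text{-}\mathcal{T}^{c}$ used in \cite{Kra2002} to define purity, where $\mathrm{Mod}\text{-}\mathcal{T}^{c}$ denotes the Grothendieck category of additive functors $(\mathcal{T}^{c})^{\mathrm{op}}\to\mathrm{Ab}$. By Definition \ref{compactob}, every object of $\mathcal{T}^{c}$ is compact, so $\mathbf{Y}$ preserves small coproducts; hence $\mathbf{Y}(N)=\bigoplus_{j}\mathbf{Y}(L_{j})=\bigoplus_{k}\mathbf{Y}(M_{k})$. By Krause's identification of pure-injective objects with injective objects of $\mathrm{Mod}\text{-}\mathcal{T}^{c}$, together with the fact that $\mathbf{Y}$ is fully faithful on pure-injectives, the object $\mathbf{Y}(N)$ is injective, each summand $\mathbf{Y}(L_{j})$ and $\mathbf{Y}(M_{k})$ is injective, and the ring isomorphisms $\mathrm{End}_{\mathcal{T}}(L_{j})\cong\mathrm{End}_{\mathrm{Mod}\text{-}\mathcal{T}^{c}}(\mathbf{Y}(L_{j}))$ transport indecomposability, so that these summands are indecomposable injectives.

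Next I would invoke the Krull--Remak--Schmidt--Azumaya theorem inside $\mathrm{Mod}\text{-}\mathcal{T}^{c}$. In any Grothendieck category an indecomposable injective object has a local endomorphism ring: every monomorphism from such an object to itself is an isomorphism, while the non-monomorphisms form a two-sided ideal because any two nonzero subobjects meet nontrivially. Thus $\mathbf{Y}(N)$ is written in two ways as a coproduct of objects with local endomorphism rings, and Azumaya's theorem yields a bijection $\sigma\colon J\to K$ with $\mathbf{Y}(L_{j})\simeq\mathbf{Y}(M_{\sigma(j)})$ for all $j$. Full faithfulness of $\mathbf{Y}$ on pure-injective objects then promotes each of these isomorphisms to an isomorphism $L_{j}\simeq M_{\sigma(j)}$ in $\mathcal{T}$, which is the required conclusion.

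The main obstacle is that the index sets $J$ and $K$ are arbitrary: for finite decompositions the ordinary Krull--Remak--Schmidt theorem would suffice, but for infinite coproducts one genuinely needs Azumaya's strengthening, whose proof rests on the exchange property for direct sums of objects with local endomorphism rings. Passing to $\mathrm{Mod}\text{-}\mathcal{T}^{c}$ is precisely what makes this applicable, since there injective objects and their decompositions are standard; the remaining care lies only in confirming that $\mathbf{Y}$ restricts to an equivalence between pure-injectives and injectives and preserves the coproducts in question, which is the content of the results of \cite{Kra2002}. One could instead argue directly in $\mathcal{T}$, using that a triangulated category with countable coproducts is idempotent complete (B\"okstedt--Neeman) to supply the ambient additive framework Azumaya's theorem requires, but the functor-category route keeps the appeal to infinite decomposition theory confined to a familiar abelian setting.
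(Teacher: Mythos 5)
Your argument is correct. The paper itself offers no proof of this corollary---it is imported verbatim as \cite[Corollary 7.6]{Ben2020}---but your route (the restricted Yoneda functor into the Grothendieck category $\mathrm{Mod}\text{-}\mathcal{T}^{c}$, which preserves coproducts by compactness, Krause's identification of pure-injective objects of $\mathcal{T}$ with injective objects there together with full faithfulness on pure-injectives, uniformity and hence locality of endomorphism rings of indecomposable injectives, and Azumaya's uniqueness-of-decomposition theorem) is the standard argument for statements of this kind and, as far as one can tell, is essentially the argument underlying the cited result. The only step you leave implicit is that each $L_{j}$ and $M_{k}$ is itself pure-injective, being a split direct summand of the pure-injective coproduct via the canonical retraction; this is what licenses applying the full faithfulness of $\mathbf{Y}$ to transport both indecomposability into $\mathrm{Mod}\text{-}\mathcal{T}^{c}$ and the isomorphisms $\mathbf{Y}(L_{j})\simeq\mathbf{Y}(M_{\sigma(j)})$ back to $\mathcal{T}$.
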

\section{Gentle algebras and string and band complexes.}\label{2}
In the remainder of the article we use notation both from the author's PhD thesis \cite{Ben2018} and from work of Bekkert and Merklen \cite{BekMer2003}. We will also refer to a summary \cite{Ben2016} of a part of \cite{Ben2018}.
\begin{notation}For the remainder of the article we let $k$ be a field, $Q$ be a quiver and $kQ$ be the path algebra. For any vertex $u$ we let $\mathbf{A}(u\rightarrow)$ (respectively $\mathbf{A}(\rightarrow u)$) denote the set of arrows $a$ whose tail $t(a)$ (respectively head $h(a)$) is $u$.  Our conventions on multiplication in $kQ$ mean that, if $a\in\mathbf{A}(\rightarrow v)$ and $b\in\mathbf{A}(u\rightarrow)$, then $ba$ if $u=v$ and $ba=0$ otherwise. 

\cite[Definition 1.1.13]{Ben2018} Any non-trivial path $\gamma$ in $Q$ has a \textit{first arrow} $\mathrm{f}(\gamma)$ and a \textit{last arrow} $\mathrm{l}(\gamma)$ satisfying $\mathrm{l}(\gamma)\gamma'=\gamma=\gamma''\mathrm{f}(p)$
for some (possibly trivial) paths $\gamma'$ and $\gamma''$. The head and tail of $\gamma$ are defined and denoted respectively by $h(\gamma)=h(\mathrm{l}(\gamma))$ and $t(\gamma)=t(\mathrm{f}(\gamma))$.
\end{notation}
\begin{assumption}
For the remainder of the article we assume $\Lambda$ is a \textit{gentle algebra} in the sense of \cite{AssSko1987}. That is, we assume that $\Lambda=kQ/\mathcal{J}$ where $\mathcal{J}$ is an admissible ideal in $kQ$ generated by a set of length $2$ paths, such that the following conditions hold.
\begin{enumerate}
\item If $v$ is a vertex then $\vert\mathbf{A}(v\rightarrow)\vert\leq2$
and $\vert\mathbf{A}(\rightarrow v)\vert\leq2$.
\item If $y\in\mathbf{A}$ then $\vert\{ x\in \mathbf{A}(h(y)\rightarrow)\mid xy\in\mathcal{J}\}\vert\leq 1$ and $\vert \{ z\in \mathbf{A}(t(y)\rightarrow)\mid yz\in\mathcal{J}\}\vert\leq 1$.
\item If $y\in\mathbf{A}$ then $\vert \{ x\in \mathbf{A}(h(y)\rightarrow)\mid xy\notin\mathcal{J}\}\vert\leq 1$ and $\vert\{ z\in \mathbf{A}(t(y)\rightarrow)\mid yz\notin\mathcal{J}\}\vert\leq 1$.
\end{enumerate}
We denote by $\mathbf{P}$ the set of non-trivial paths $\gamma\notin \mathcal{J}$.
\end{assumption}
Note that, since $\mathcal{J}$ is assumed to be generated by length 2 paths, we must have $\mathbf{A}\subseteq\mathbf{P}$. We now choose an example of a gentle algebra to be repeatedly recycled in the sequel.
\begin{example}\label{runningexample}Let $Q$ be the quiver 
\[
\xymatrix@R=.2em{0\ar@(ul,dl)_{x}\ar[ddr]_{z} & 1\ar[l]_{y}\ar[dd]_{g} &  & 2\ar[dl]_{t}\\
& & 3\ar[ul]_{f}\ar[dr]_{r} &\\
 & 4\ar[ur]_{h} &  & 5\ar[uu]_{s}
}
\]
Then, letting  $\mathcal{J}=\langle x^{2},\,zy,\,gf,\,hg,\,fh,\,sr,\,ts,\,rt\rangle$, the ring $\Lambda=kQ/\mathcal{J}$ is a gentle algebra. The paths in $\mathbf{P}$ of length $2$ are $rh$, $hz$, $zx$, $xy$, $yf$ and $ft$. The longer paths in $\mathbf{P}$ may be found by amalgamating paths at common arrows. For example, doing this with $xy$ and $yf$ gives $xyf$. Note that $\Lambda$ is indeed finite-dimensional, since any path in $\mathbf{P}\setminus\mathbf{A}$ is a subpath of $\gamma=rhzxyft$. The first arrow of $\gamma$ is $\mathrm{f}(\gamma)=t$, and the last is $\mathrm{l}(\gamma)=r$.  
\end{example}
We now recall the language of \textit{generalised words}, in the sense of Bekkert and Merklen \cite{BekMer2003}, in order to define string and band complexes; see Definition \ref{stringcomplexes}. In Definition \ref{homotopywords} we recall the alternative system of \textit{homotopy words} from \cite{Ben2018}. It is straightforward to translate between the two word systems. The author finds it easier to define string and band complexes using generalised words, although homotopy words are necessary for the application of the functorial filtrations method (as discussed in \S\ref{intro}).
\begin{definition}\label{definition2}\cite[\S 4.1]{BekMer2003} By a \textit{generalised letter} we mean a symbol of the form $[\gamma]$ or $[\gamma^{-1}]$ where $\gamma\in\mathbf{P}$. Let $I$ be one of the
sets $\{0,\dots,m\}$ (for some $m\geq0$), $\mathbb{N}$, $-\mathbb{N}=\{-n\mid n\in\mathbb{N}\}$,
or $\mathbb{Z}$. For $I\neq\{0\}$ By a \textit{generalised} $I$-\textit{word} we mean a sequence of the form 
\[
[C]=\begin{cases}
[C_{1}]\dots [C_{m}] & (\mbox{if }I=\{0,\dots,m\})\\
[C_{1}][C_{2}]\dots & (\mbox{if }I=\mathbb{N})\\
\dots[C_{-2}][C_{-1}][C_{0}] & (\mbox{if }I=-\mathbb{N})\\
\dots\dots[C_{-2}][C_{-1}][C_{0}]:[C_{1}][C_{2}]\dots & (\mbox{if }I=\mathbb{Z})
\end{cases}
\](which will be written as $[C]=\dots [C_{i}]\dots$ to save
space) where each $[C_{i}]$ is a generalised letter, and any sequence of the form $[C_{i}][C_{i+1}]$ is one of 
\begin{enumerate}
\item $[\gamma][\lambda^{-1}]$ where $h(\gamma)=h(\lambda)$ and $\mathrm{l}(\gamma)\neq \mathrm{l}(\lambda)$;
\item $[\gamma^{-1}][\lambda^{-1}]$ where $t(\gamma)=h(\lambda)$ and $\mathrm{f}(\gamma)\mathrm{l}(\lambda)\in\mathcal{J}$;
\item $[\gamma^{-1}][\lambda]$ where $t(\gamma)=t(\lambda)$ and $\mathrm{f}(\gamma)\neq \mathrm{f}(\lambda)$;
\item $[\gamma][\lambda]$ where $h(\gamma)=t(\lambda)$ and $\mathrm{f}(\lambda)\mathrm{l}(\gamma)\in\mathcal{J}$.
\end{enumerate}
For $I=\{0\}$ there are \textit{trivial} generalised words
$ [1 _{v,1}]$ and $ [1 _{v,-1}]$ for each vertex
$v$. 

The head and tail of generalised letter are defined by setting $h([\gamma])=t(\gamma)$ and $t([\gamma])=h(\gamma)$. For each $i\in I$ there is an \textit{associated vertex} $v_{C}(i)$
defined by $v_{C}(i)=t([C_{i}])$ for $i\leq0$ and $v_{C}(i)=t(r_{i})$
for $i>0$ provided $I\neq\{0\}$, and $v_{ 1 _{v,\pm1}}(0)=v$ otherwise.
\end{definition}
The use of square brackets helps the reader distinguish between (sequences of consecutive arrows which make up a path in $\mathbf{P}$) and (generalised letters which are all direct or all inverse). 
\begin{example}For the gentle algebra $\Lambda$ from Example \ref{runningexample} we can generate various examples of generalised words. In what follows we will use a variety of shorthand notation. For example, we define the generalised $-\mathbb{N}$-word ${}^{\infty}([r][s][t])$ and the generalised $\mathbb{N}$-word $([x])^{\infty}$ by $=\dots[r][s][t][r][s][t][r][s][t]$ and $([x])^{\infty}=[x][x][x]\dots$ respectively. The colon helps one index generalised $\mathbb{Z}$-words in the sequel. For example, the $\mathbb{Z}$-words ${}^{\infty}([r][s][t]) : ([h^{-1}][g^{-1}][f^{-1}])^{\infty}$ and ${}^{\infty}([r][s][t]) :[r][s][t] ([h^{-1}][g^{-1}][f^{-1}])^{\infty}$
are distinct. Note that the string complexes defined in Definition \ref{stringcomplexes} by the former is a degree 3 shift of the complex defined by the latter. 
\end{example}
To apply the functorial filtrations method, as described in \S\ref{intro}, we will also introduce an alternative language of \textit{homotopy words} developed in the author's thesis \cite{Ben2018}; see Definition \ref{homotopywords}. 
\begin{definition}\label{stringcomplexes}\cite[Definition 2]{BekMer2003} Now let $I$ be one of $\{0,\dots,m\}$, $\mathbb{N}$, $-\mathbb{N}$ or $\mathbb{Z}$ as above, and fix a generalised $I$-word $[C]$. We now recall a function $\mu_{C}:I\to\mathbb{Z}$ which, in what follows below, keeps track of the homogeneous degree in a string complex. 

If $\gamma\in\mathbf{P}$ let $H[\gamma]=-1$
and $H[\gamma^{-1}]=1$.  Let $\mu_{C}(0)=0$, $\mu_{C}(i)=H[C_{1}]+\dots+H[C_{i}]$ if $0<i\in I$; and $\mu_{C}(i)=-(H[C_{0}]+\dots+H[C_{i+1}]$ if $0>i\in I$. We now define a complex $P(C)$.

 For $n\in\mathbb{Z}$ let $P^{n}(C)$ be the sum $\bigoplus\Lambda e_{v_{C}(i)}$ over $i\in \mu_{C}^{-1}(n)$. For each $i\in I$ let $ b _{i,C}$ denote the coset of $e_{v_{C}(i)}$
in $P(C)$ in degree $\mu_{C}(i)$. Define the complex $P(C)$ by extending the assignment $d_{P(C)}( b _{i})= b _{i}^{-}+ b _{i}^{+}$
linearly over $\Lambda$ for each $i\in I$, where:
\begin{enumerate}
\item $b _{i}^{+}=\alpha b _{i+1}$ if $i+1\in I$ and $[C_{i+1}]=[\alpha^{-1}]$, and $b _{i}^{+}=0$ otherwise; and
\item $b _{i}^{-}=\beta b _{i-1}$ if $i-1\in I$ and $[C_{i}]=[\beta]$, and $b _{i}^{-}=0$ otherwise.
\end{enumerate}
\end{definition}
\begin{example}\label{runningexample2}Consider the gentle algebra $\Lambda$ from Example \ref{runningexample} and the generalised word $[C]=[h^{-1}][g^{-1}][(ft)^{-1}][s^{-1}][r^{-1}][yf] [x^{-1}]$. We may depict $P(C)$ by
\[
\xymatrix@C=.005em@R=1.2em{ 
\Lambda e_{3}\ar[dr]^{h} & & & & & & & & & &  & P^{0}(C)\ar[d]^{d_{P(C)}^{0}}\ar@{--}[lllllllllll]\\
& \Lambda e_{4}\ar@{--}[l]\ar[dr]^{g} & & & & & & & & & & P^{1}(C)\ar[d]^{d_{P(C)}^{1}}\ar@{--}[llllllllll]\\
& & \Lambda e_{1}\ar@{--}[ll]\ar[dr]^{ft} & & & & & & & & & P^{2}(C)\ar[d]^{d_{P(C)}^{2}}\ar@{--}[lllllllll]\\
& & & \Lambda e_{2}\ar@{--}[lll]\ar[dr]^{s} & & & & & & & & P^{3}(C)\ar[d]^{d_{P(C)}^{3}}\ar@{--}[llllllll]\\
& & & & \Lambda e_{5}\ar@{--}[llll]\ar[dr]^{r} &  & \Lambda e_{0}\ar@{--}[ll]\ar[dl]^{yf}\ar[dr]^{x} & & & & & P^{4}(C)\ar[d]^{d_{P(C)}^{4}}\ar@{--}[lllll]\\
& & & & & \Lambda e_{3}\ar@{--}[lllll] & & \Lambda e_{0}\ar@{--}[ll]  & & & & P^{5}(C)\ar@{--}[llll]
}\]
where an arrow $\Lambda e_{v}\to \Lambda e_{u}$ labelled by a path $\gamma$ with head $v$ and tail $u$ indicates right-multiplication by $\gamma$. The dashed arrows are for the purposes of aiding the reader calculate the homogeneous components: for example, $P^{4}(C)=\Lambda e_{5}\oplus\Lambda e_{0}$.
\end{example}
In the sense of Example \ref{runningexample2}, the schema defining the string complex associated to a generalised $\mathbb{N}$-word (respectively $-\mathbb{N}$-word, respectively $\mathbb{Z}$-word) extends infinitely to the right (respectively to the right, respectively to the left and to the right). 

In some cases the shape of the schema for a generalised $\mathbb{Z}$-word repeats itself, in such a way that the string complex is concentrated in finitely many homogeneous components. When this happens, one may identify points in the same translational orbit. Algebraically this corresponds to defining a \textit{band complex} from a \textit{period} generalised $\mathbb{Z}$-word. We explain this below.
\begin{definition}\cite[Definitions 1.3.26, 1.3.32 and 1.3.42]{Ben2018}\label{def.3.3}
Let $[C]$ be a generalised word. Write $I_{C}$ for the subset of $\mathbb{Z}$ where $[C]$ is a generalised $I_{C}$-word. Let $t\in\mathbb{Z}$. If $I_{C}=\mathbb{Z}$ we let $[C[t]]=\dots [C_{t}]\mid[C_{t+1}]\dots$. That is, $[C_{i}[t]]_{i}=[C_{i+t}]$ for all $i\in\mathbb{Z}$. If instead $I_{C}\neq\mathbb{Z}$ we let $[C]=[C[t]]$. 

The \textit{ inverse} $[C^{-1}]$ of $[C]$ is defined
by $[1 _{v,\delta}^{-1}]= [1 _{v,-\delta}]$ if $I=\{0\}$,
and otherwise inverting the generalised letters and reversing their order. Note the generalised $\mathbb{Z}$-words are indexed so
that
\[
\left(\dots[C_{-1}][C_{0}]: [C_{1}][C_{2}]\dots\right)^{-1}=\dots [C_{2}^{-1}][C_{1}^{-1}]: [C^{-1}_{0}][C^{-1}_{-1}]\dots
\]
\cite[Definition 1.3.42]{Ben2018} We say $[C]$ is \textit{periodic}
if $I_{C}=\mathbb{Z}$, $[C]=[C\left[p\right]]$ and $\mu_{C}(p)=0$ for some $p>0$.  In this case the minimal such $p$ is the \textit{period} of $[C]$, and we say $[C]$ is $p$-\textit{periodic}. We say $[C]$ is \textit{aperiodic} if $[C]$ is not periodic. 

\cite[Definition 1.3.45]{Ben2018} If $[C]$ is periodic of period $p$ then by \cite[Corollary 1.3.43]{Ben2018} $P^{n}(C)$ is a $\Lambda\text{-}k[T,T^{-1}]$-bimodule
where $T$ acts on the right by $ b _{i}\mapsto b _{i-p}$. By translational symmetry the map $d_{P(C)}^{n}:P^{n}(C)\rightarrow P^{n+1}(C)$ is $\Lambda\otimes_{k}k[T,T^{-1}]$-linear. For a $k[T,T^{-1}]$-module $V$ we define $P(C,V)$ by $P^{n}(C,V)=P^{n}(C)\otimes_{k[T,T^{-1}]}V$
and $d_{P(C,V)}^{n}=d_{P(C)}^{n}\otimes\mathrm{id}_{V}$ for each $n\in\mathbb{Z}$. 

\cite[Definition 1.3.48]{Ben2018} A \textit{string complex} has the form $P(C)$ where $[C]$ is aperiodic. If $V$ is a $k[T,T^{-1}]$-module we call $P(C,V)$ a \textit{band complex} provided $[C]$ is a periodic homotopy $\mathbb{Z}$-word and $V$ is an indecomposable
$k[T,T^{-1}]$-module.
\end{definition}
\begin{example}Let $0\neq\eta\in k$, $[C]={}^{\infty}([x][y^{-1}][g][z^{-1}])^{\infty}$ and let $V=k^{2}$ be the $k[T,T^{-1}]$ module given by $T(v_{1},v_{2})=(\eta v_{1}+v_{2},\eta v_{2})$. Here the band complex $P(C,V)$ is depicted by
\[
\xymatrix@R=0.5em@C=0.5em{&
\Lambda e_{4}\oplus\Lambda e_{4}\ar@/_{1pc}/[ddl]^{z}_(0.7){A=\small{\begin{pmatrix}\eta & 1\\
0 & \eta
\end{pmatrix}}^{-1}}\ar[ddrrrr]|>>>>>>>>>>>>>>>>>>>>>{\hole}_(0.55){g} & & &
\Lambda e_{0}\oplus\Lambda e_{0}\ar@/^{0.6pc}/[ddllll]^(0.5){x}\ar@/^{1pc}/[ddr]^{y} &  & & & P^{-1}(C,V)\ar[dd]^{d_{P(C,V)}^{-1}}\\
&  &  &  &  &  &  & & &\\
\Lambda e_{0}\oplus\Lambda e_{0}&  &  &  &  &  \Lambda e_{1}\oplus\Lambda e_{1} & & & P^{0}(C,V) 
}
\]
See \cite[Definition 2]{BekMer2003} and \cite[Definition 3.7 and Remark 3.8]{Ben2016} for further details.
\end{example}
\section{Compact objects in the homotopy category.}\label{cmpctgentle}Our goal for \S\ref{cmpctgentle} is to describe the compact objects in $\mathcal{K}(\Lambda\text{-}\mathbf{Proj})$ by combining Theorems \ref{neemantheorem} and \ref{theorem.1.1}
Recall, by Theorem \ref{neemantheorem}, that the full subcategory of compact objects in $\mathcal{K}(\Lambda\text{-}\mathbf{Proj})$ is the category $\mathcal{K}^{+,b}(\Lambda\text{-}\mathbf{proj})$ of left-bounded complexes with bounded cohomology and finitely generated homogeneous components. With a view toward describing the compact objects in $\mathcal{K}(\Lambda\text{-}\mathbf{Proj})$, we explain why it suffices to restrict our focus to string and band complexes. 
\begin{theorem}
\label{theorem.1.1}\emph{\cite[Theorem 2.0.1]{Ben2018}} The following statements hold.
\begin{enumerate}
\item Every object in $\mathcal{K}(\Lambda\text{-}\boldsymbol{\mathrm{proj}})$ is isomorphic to a (possibly infinite) coproduct of shifts of string complexes $P(C)$ and shifts of band complexes $P(C,V)$.
\item Each shift of a string or band complex is an indecomposable object in $\mathcal{K}(\Lambda\text{-}\boldsymbol{\mathrm{Proj}})$.
\end{enumerate}
\end{theorem}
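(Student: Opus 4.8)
The plan is to adapt the matrix-reduction (functorial filtrations) strategy of Bekkert and Merklen \cite{BekMer2003} from the bounded setting to the unbounded category $\mathcal{K}(\Lambda\text{-}\mathbf{proj})$. For Part (1) the first step is to reduce to \emph{minimal} complexes, that is, to complexes whose differentials are matrices with entries in $\rad\Lambda$. A morphism between indecomposable summands $\Lambda e_u\to\Lambda e_v$ is right multiplication by an element of $e_u\Lambda e_v$, and it is invertible exactly when that element is a nonzero scalar multiple of a trivial path, which forces $u=v$. In that case I would perform the corresponding Gaussian elimination, splitting off a contractible summand $\Lambda e_v\xrightarrow{\mathrm{id}}\Lambda e_v$, which is zero in $\mathcal{K}(\Lambda\text{-}\mathbf{proj})$. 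Iterating this, with convergence controlled by the finite generation of each homogeneous component, replaces the given object by a homotopy-equivalent minimal complex whose differential entries are $k$-linear combinations of non-trivial paths from $\mathbf{P}$.

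Next I would encode a minimal complex as a representation of the bimodule matrix problem attached to $\Lambda$, the underlying data being the differential matrices viewed over the spaces $e_u\Lambda e_v$. The three defining gentle conditions are precisely what makes this a tame problem of string-and-band type: the bounds on the arrows entering and leaving each vertex, and on relations versus non-relations, guarantee that admissible row and column transformations — which realise isomorphisms of complexes, hence isomorphisms in $\mathcal{K}(\Lambda\text{-}\mathbf{proj})$ — reduce each differential to the canonical form of Definition \ref{stringcomplexes}, in which every basis generator $b_i$ satisfies $d(b_i)=b_i^-+b_i^+$ with each $b_i^{\pm}$ either zero or a single path applied to a neighbouring generator. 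The reduced differential has no entries linking distinct connected components of the associated diagram, so the complex is the coproduct of those components. Aperiodic components are shifts of string complexes $P(C)$; each periodic component retains a residual action of the period shift $T$, hence determines a $k[T,T^{-1}]$-module, and decomposing that module into indecomposables $V$ produces the band complexes $P(C,V)$. This yields Part (1).

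For Part (2) I would show that the endomorphism ring in $\mathcal{K}(\Lambda\text{-}\mathbf{Proj})$ of any shift of a string or band complex is local, which forces indecomposability. The tool is a combinatorial \emph{graph-map} description of homomorphisms between string and band complexes, analogous to the module case for string algebras. From this one reads that every endomorphism of $P(C)$ with $C$ aperiodic is a scalar multiple of the identity modulo maps killed by the homotopy relation, so its endomorphism ring is local with residue field $k$. For a band complex $P(C,V)$ one shows that the endomorphism ring surjects onto $\mathrm{End}_{k[T,T^{-1}]}(V)$ with the remaining endomorphisms lying in the radical; since $V$ is indecomposable, $\mathrm{End}_{k[T,T^{-1}]}(V)$ is local, and hence so is $\mathrm{End}_{\mathcal{K}}(P(C,V))$. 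In both cases aperiodicity of the underlying word rules out the nontrivial self-symmetries that could otherwise produce a nontrivial idempotent.

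The hard part will be the matrix reduction of Part (1): checking that the gentle conditions force exactly the string/band normal form, and — the genuinely new difficulty relative to the bounded setting of \cite{BekMer2003} — controlling the reduction for unbounded complexes, where one must ensure that the transformations converge and that the infinite homotopy words are produced correctly. A secondary technical point is the endomorphism-ring computation for bands, where one must keep track of the homotopy relation while isolating the $k[T,T^{-1}]$-action.
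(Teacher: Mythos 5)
The paper does not actually prove Theorem \ref{theorem.1.1}: it is imported from the thesis \cite[Theorem 2.0.1]{Ben2018}, and the apparatus reproduced in \S\ref{linrels}--\S\ref{seccover} (the subquotient functors $F_{B,D,n}$ of Definition \ref{refined}, their values on string and band complexes in Lemma \ref{usefulprops}, and the covering lemmas) is precisely the functorial filtrations machinery by which that classification is obtained. Your route for Part (1) is instead matrix reduction in the style of Bekkert--Merklen, and the step you defer to the end as ``the hard part'' --- convergence of the reduction for unbounded complexes --- is not a technical afterthought but the entire obstruction. Putting an unbounded differential into string/band normal form requires infinitely many row and column operations, and an infinite composite of elementary transformations need not assemble into an automorphism of the complex; nothing in your sketch controls this. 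The functorial filtrations method exists exactly to sidestep it: one never normalises the differential, but instead reads off multiplicities via the functors $F_{B,D,n}$, uses Lemma \ref{usefulprops}(ii)(b) and (iii)(b) to build a single comparison morphism $\theta\colon N\to M$ from the candidate coproduct, and proves $\theta$ is a degreewise isomorphism using Lemmas \ref{lemma.5.3}--\ref{lemma.7.1-1}. Your preliminary reduction to homotopically minimal complexes is sound (compare Corollary \ref{isosreflect}), but everything after it is asserted rather than argued.

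Part (2) contains a concrete error. Definition \ref{def.3.3} allows an arbitrary indecomposable $k[T,T^{-1}]$-module $V$ in a band complex, and such a $V$ need not have local endomorphism ring: $V=k[T,T^{-1}]$ is indecomposable (it is a domain, hence has no nontrivial idempotents) yet $\mathrm{End}_{k[T,T^{-1}]}(V)=k[T,T^{-1}]$ is not local. So the inference ``$V$ indecomposable, hence $\mathrm{End}(V)$ local, hence $\mathrm{End}(P(C,V))$ local'' fails; likewise, for string complexes on infinite words, locality of the endomorphism ring is strictly stronger than what you need and is not established by the graph-map heuristic. What the theorem requires is only the absence of nontrivial idempotents, and that is what the functors deliver: by Lemma \ref{usefulprops}(ii)(a) and (iii)(a), $F_{B,D,n}(P(C))\simeq k$ and $F_{B,D,n}(P(C,V))\simeq V$ for the matching triple while $F_{B',D',n'}$ vanishes whenever $(B,D,n)\nsim(B',D',n')$; a splitting $P(C,V)\simeq X\oplus Y$ therefore induces a splitting of $k$ or of the indecomposable $V$, forcing the functor to vanish on one summand, and the covering lemmas then show that summand is nullhomotopic. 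If you want to salvage an endomorphism-ring argument, the correct statement is that the kernel of $\mathrm{End}(P(C,V))\twoheadrightarrow\mathrm{End}_{k[T,T^{-1}]}(V)$ contains no nonzero idempotents, not that the target is local.
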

To describe the string and band complexes in the category $\mathcal{K}^{+,b}(\Lambda\text{-}\mathbf{proj})$ we begin by computing the kernel of a differential for a string complex.
\begin{definition}
\label{definition.10.5}Let $[C]$ be a generalised $I$-word. For each $i\in I$ define a path $\kappa(i)$ by:

(a) $\kappa(i)=e_{v_{C}(i)}$ if ($i-1\notin I$ or $[C_{i}]=[\gamma^{-1}]$)
and ($i+1\notin I$ or $[C_{i+1}]=[\lambda]$);

(b) $\kappa(i)= \mathrm{f}( \gamma)$ if $i\pm1\in I$ and $[C_{i}][C_{i+1}]=[\gamma^{-1}][\lambda^{-1}]$;

(c) $\kappa(i)= \mathrm{f}( \lambda)$ if $i\pm1\in I$ and $[C_{i}][C_{i+1}]=[\gamma][\lambda]$;

(d) $\kappa(i)=\beta\in\mathbf{A}$ if $i-1\notin I\ni i+1$, $[C_{i+1}]=[\lambda^{-1}]$ and $\beta\mathrm{l}(\lambda)\in\mathcal{J}$;

(e) $\kappa(i)=\alpha\in\mathbf{A}$ if $i+1\notin I\ni i-1$, $[C_{i}]=[\gamma]$ and $\alpha \mathrm{l}(\gamma)\in\mathcal{J}$; and

(f) $\kappa(i)=0$ if $i\pm1\in I$ and $[C_{i}][C_{i+1}]=[\gamma][\lambda^{-1}]$.

Note that for any $i\in I$ exactly one of the ((a), (b), (c), (d),
(e) and (f)) is true. 

We say that the $i^{\mathrm{th}}$\textit{ kernel part} is: \textit{full}
in case (a); a \textit{left} (resp. \textit{right}) \textit{arm} in
case (b) (resp. (c)); a \textit{left} (resp. \textit{right}) \textit{peripheral
arm} in case (d) (resp. (e)); and $0$ in case (f). 
\end{definition}
\begin{corollary}
\label{corollarly.10.3}\emph{\cite[Corollary 2.7.8]{Ben2018}} Let $[C]$ be a generalised $I$-word. For any
\emph{$n\in\mathbb{Z}$} we have \emph{$\mathrm{ker}(d_{P(C)}^{n})=\bigoplus_{i\in\mu_{C}^{-1}(n)}\Lambda\kappa(i)b_{i}$}. 
\end{corollary}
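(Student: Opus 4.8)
The plan is to compute $\ker(d_{P(C)}^{n})$ directly from the description of the differential in Definition \ref{stringcomplexes}, exploiting the incidence conditions (1)--(4) of Definition \ref{definition2} together with the defining axioms of a gentle algebra. First I would write a general element of $P^{n}(C)=\bigoplus_{i\in\mu_{C}^{-1}(n)}\Lambda b_{i}$ as $w=\sum_{i}\lambda_{i}b_{i}$ with $\lambda_{i}\in\Lambda e_{v_{C}(i)}$, and expand $d_{P(C)}^{n}(w)=\sum_{i}\lambda_{i}(b_{i}^{-}+b_{i}^{+})$. Collecting the coefficient of a basis element $b_{j}$ with $j\in\mu_{C}^{-1}(n+1)$, only the neighbours $i=j\pm1$ contribute, and a short degree check shows that $j-1\in\mu_{C}^{-1}(n)$ exactly when $[C_{j}]$ is inverse and $j+1\in\mu_{C}^{-1}(n)$ exactly when $[C_{j+1}]$ is direct. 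Thus $w\in\ker(d_{P(C)}^{n})$ is equivalent to the system of equations $\lambda_{j-1}C_{j}+\lambda_{j+1}C_{j+1}=0$, one for each $j\in\mu_{C}^{-1}(n+1)$, where each summand is present precisely under the stated condition on the adjacent letter.

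The crux of the argument is to \emph{decouple} this system. When $j$ has both neighbours in degree $n$ -- that is, $[C_{j}][C_{j+1}]=[\alpha^{-1}][\beta]$ is a pair of type (3) in Definition \ref{definition2} -- the relation reads $\lambda_{j-1}\alpha=-\lambda_{j+1}\beta$, with $\lambda_{j-1}\alpha\in\Lambda\alpha$ and $\lambda_{j+1}\beta\in\Lambda\beta$. Because type (3) forces $\mathrm{f}(\alpha)\neq\mathrm{f}(\beta)$, every basis path ending in $\alpha$ has rightmost arrow $\mathrm{f}(\alpha)$ while every basis path ending in $\beta$ has rightmost arrow $\mathrm{f}(\beta)$, so $\Lambda\alpha\cap\Lambda\beta=0$ and the relation splits into the two independent conditions $\lambda_{j-1}\alpha=0$ and $\lambda_{j+1}\beta=0$. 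Running this over all $j$, the whole system becomes a conjunction of per-index conditions on the individual $\lambda_{i}$: namely $\lambda_{i}C_{i}=0$ whenever $[C_{i}]$ is direct, and $\lambda_{i}C_{i+1}=0$ whenever $[C_{i+1}]$ is inverse. Consequently $\ker(d_{P(C)}^{n})=\bigoplus_{i}K_{i}b_{i}$, where $K_{i}\subseteq\Lambda e_{v_{C}(i)}$ is the solution set of these at most two conditions; the sum is direct because distinct $b_{i}$ span distinct summands.

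It then remains to identify $K_{i}=\Lambda\kappa(i)$ in each of the six cases of Definition \ref{definition.10.5}. Here I would use the gentle axioms to compute annihilators: for $\gamma\in\mathbf{P}$ with $h(\gamma)=v_{C}(i)$, a basis path $p$ at $v_{C}(i)$ satisfies $p\gamma=0$ exactly when $\mathrm{f}(p)\,\mathrm{l}(\gamma)\in\mathcal{J}$, and gentleness guarantees a unique arrow $a$ out of $v_{C}(i)$ with $a\,\mathrm{l}(\gamma)\in\mathcal{J}$, so that $\{\lambda:\lambda\gamma=0\}=\Lambda a$ (or $0$ if no such $a$ exists). Matching the resulting arrow with $\kappa(i)$ is forced by the very incidence conditions of Definition \ref{definition2}: in case (b) (resp. (c)) the relation $\mathrm{f}(\gamma)\mathrm{l}(\lambda)\in\mathcal{J}$ of type (2) (resp. $\mathrm{f}(\lambda)\mathrm{l}(\gamma)\in\mathcal{J}$ of type (4)) identifies the annihilating arrow as $\mathrm{f}(\gamma)$ (resp. $\mathrm{f}(\lambda)$); cases (d), (e) are the boundary analogues where $\kappa(i)$ is named directly; case (a) carries no condition, so $K_{i}=\Lambda e_{v_{C}(i)}$; and the peak case (f) carries two conditions, whose annihilating arrows are distinct by the dual gentleness bound (at most one relation $ay\in\mathcal{J}$ for fixed $a$), whence $K_{i}=0$.

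I expect the main obstacle to be precisely this last point together with the decoupling step: verifying that the two annihilator conditions at a peak cannot share a common nonzero solution is exactly where the gentle hypotheses must be used in full. The cleanest route is to reduce it, as in the decoupling argument, to the single observation that distinct admissible arrows generate left ideals spanned by paths with distinct rightmost arrows, so that their intersection vanishes; everything else is bookkeeping over the local configuration of letters at $i$. The trivial words $[1_{v,\pm1}]$ with $I=\{0\}$ give $P(C)=\Lambda e_{v}$ in degree $0$ with zero differential, so $\ker=\Lambda e_{v}=\Lambda\kappa(0)b_{0}$, handled by case (a).
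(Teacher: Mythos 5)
The paper does not actually prove Corollary \ref{corollarly.10.3}; it imports it verbatim from \cite[Corollary 2.7.8]{Ben2018}, so there is no in-text argument to compare against. Your proposal is a correct, self-contained proof and is the computation one would expect the cited source to carry out: expand $d_{P(C)}^{n}$ on a general element $\sum_{i}\lambda_{i}b_{i}$, observe that the coefficient of $b_{j}$ for $j\in\mu_{C}^{-1}(n+1)$ only receives contributions from $i=j\pm 1$ (present exactly when $[C_{j}]$ is inverse, resp.\ $[C_{j+1}]$ is direct), decouple the resulting two-term equation in configuration (3) of Definition \ref{definition2} using $\mathrm{f}(\gamma)\neq\mathrm{f}(\lambda)$ (so $\Lambda\gamma\cap\Lambda\lambda=0$, since nonzero paths in $\Lambda\gamma$ have first arrow $\mathrm{f}(\gamma)$), and then identify each per-index solution space as a left annihilator $\{\lambda:\lambda\gamma=0\}=\Lambda a$ with $a\,\mathrm{l}(\gamma)\in\mathcal{J}$ the unique such arrow, which the incidence conditions of types (2) and (4) match with $\kappa(i)$ in cases (b) and (c). You also correctly locate the two places where gentleness is used in full: the decoupling step and the peak case (f), where the two annihilating arrows must be distinct because a fixed arrow admits at most one relation on either side. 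One small remark: your parenthetical ``or $0$ if no such $a$ exists'' is, if anything, more careful than Definition \ref{definition.10.5} as printed, whose cases (d) and (e) tacitly presuppose that the annihilating arrow exists; when it does not, the kernel part is $0$, consistent with your computation.
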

We now recall the way in which generalised words may be composed.
\begin{definition}\label{generalisedsign} Let $h(\alpha^{-1})=t(\alpha)$ and $t(\alpha^{-1})=h(\alpha)$ for each $\alpha\in\mathbf{A}$. Choose \textit{sign} $s(l)\in\{\pm1\}$ for each symbol $l$ of the form $\alpha$ or $\alpha^{-1}$ with $\alpha\in\mathbf{A}$: such that if distinct such symbols $l$ and $l'$ have the same head, they have the same sign if and only if $\{l,l'\}=\{\alpha^{-1},\beta\}$
with $\alpha\beta\in\mathcal{J}$. Now let $s([\gamma])=s( \mathrm{f}( \gamma)^{-1})$ and $s([\gamma^{-1}])=-s( \mathrm{l}( \gamma))$, and for a generalised $I$-word $[C]$ with $\{0\}\neq I\subseteq \mathbb{N}$ we let $h([C])=h([C_{1}])$ and $s([C])=s([C_{1}])$. For any vertex $v$ let $s([1_{v,\pm1}])=\pm1$. 

Let $D$ and $E$ be homotopy words where $I_{[D^{-1}]}\subseteq\mathbb{N}$ and $I_{[E]}\subseteq\mathbb{N}$. If $u=h([D^{-1}])$ and $\epsilon=-s([D^{-1}])$ let $[D][ 1 _{u,\epsilon}]=[D]$. If $v=h([E])$ and $\delta=s([E])$ we let $ [1 _{v,\delta}][E]=[E]$.
The \textit {composition} $[D][E]$ is the concatenation of the generalised letters in $[D]$ with those in $[E]$. The result is a generalised word
if and only if $h([D^{-1}])=h([E])$ and $s([D^{-1}])=-s([E])$ \cite[Proposition 2.1.13]{Ben2018}. If $[D]=\dots [D_{-1}][D_{0}]$ is a $-\mathbb{N}$-word and $[E]=[E_{1}][E_{2}]\dots$ is an $\mathbb{N}$-word, write $[D][E]=\dots [D_{-1}][D_{0}]:[E_{1}][E_{2}]\dots$.
\end{definition} 
 To streamline Proposition \ref{proppss} we use the following notation.
\begin{definition}\label{nesw}Let $[C]$ be a finite generaised word. Let $\mathcal{W}^{+}_{\leftarrow}(C)$ be the (potentially empty) set of generalised $I$-words of the form 
\[
[B^{+}]=
\begin{cases}
[\alpha_{m}]\,\dots \,[\alpha_{1}] & (\mbox{if }I=\{0,\dots,m\}\mbox{ for some }m>0)\\
\dots \, [\alpha_{3}] \, [\alpha_{2}] \, [\alpha_{1}] & (\mbox{if }I=-\mathbb{N})
\end{cases}
\]
where $\alpha_{i}\in\mathbf{A}$ for all $i>0$ in $I$, and the concatenation $[B^{+}][C]$ is again a generalised word. If $\mathcal{W}^{+}_{\leftarrow}(C)=\emptyset$ let $[C(\swarrow)]=[C]$, and otherwise let $[C(\swarrow)]=[B][C]$ where the generalised $I$-word $[B]\in\mathcal{W}^{+}_{\leftarrow}(C)$ is unique such that for any generalised $I'$-word $[B']\in\mathcal{W}^{+}_{\leftarrow}(C)$ we have $I'\subseteq I$. 

Dually let $\mathcal{W}^{-}_{\leftarrow}(C)$ be the set of generalised $I$-words of the form 
\[
[B^{-}]=
\begin{cases}
[\beta^{-1}_{m}]\,\dots\,[\beta_{1}^{-1}] & (\mbox{if }I=\{0,\dots,m\}\mbox{ for some }m>0)\\
\dots \, [\beta^{-1}_{3}]\, [\beta^{-1}_{2}] \, [\beta^{-1}_{1}] & (\mbox{if }I=-\mathbb{N})
\end{cases}
\]
where $\beta_{i}\in\mathbf{A}$ for all $i>0$ in $I$ and $[B^{-}][C]$ is a generalised word. As above we let $[C(\nwarrow)]=[C]$ if $\mathcal{W}^{-}_{\leftarrow}(C)=\emptyset$, and otherwise $[C(\nwarrow)]=[B][C]$ where $[B]\in\mathcal{W}^{-}_{\leftarrow}(C)$ is maximal. 

Similarly we can define the generalised words $[C(\searrow)]$ and $[C(\nearrow)]$ by: letting $\mathcal{W}^{+}_{\rightarrow}(C)$ be the set of generalised $I$-words of the form
 \[
 [D^{+}]=
\begin{cases}
[\gamma_{1}^{-1}]\,\dots\,[\gamma_{m}^{-1}] & (\mbox{if }I=\{0,\dots,m\}\mbox{ for some }m>0)\\
[\gamma_{1}^{-1}]\,[\gamma_{2}^{-1}]\,[\gamma_{3}^{-1}]\,\dots & (\mbox{if }I=\mathbb{N})
\end{cases}
\]
where $\gamma_{i}\in\mathbf{A}$ for all $i>0$ in $I$ and $[C][D^{+}]$ is a generalised word
; and by letting $\mathcal{W}^{-}_{\leftarrow}(C)$ be the set of generalised $I$-words of the form 
\[
 [D^{-}]=
\begin{cases}
[\delta_{1}]\,\dots \,[\delta_{m}] & (\mbox{if }I=\{0,\dots,m\}\mbox{ for some }m>0)\\
[\delta_{1}]\,[\delta_{2}]\,[\delta_{3}]\,\dots & (\mbox{if }I=\mathbb{N})
\end{cases}
\]
where $\gamma_{i}\in\mathbf{A}$ for all $i>0$ in $I$ and $[C][D^{-}]$ is a generalised word. 

Finally, we define the generalised words $[C(\swarrow\searrow)]$, $[C(\swarrow\nearrow)]$, $[C(\nwarrow\nearrow)]$ and $[C(\nwarrow\searrow)]$ as follows. Let $\rightarrowtail\in\{\searrow,\nearrow\}$. If $\mathcal{W}^{\pm}_{\leftarrow}(C)\neq\emptyset$ then $[C(\leftarrowtail)]=[B^{\pm}][C]$ for appropriate $\leftarrowtail\in\{\swarrow,\nwarrow\}$ and sign $\pm$, and we let $[C(\leftarrowtail\rightarrowtail)]=[B^{\pm}][C(\rightarrowtail)]$. Otherwise let $[C(\leftarrowtail\rightarrowtail)]=[C(\rightarrowtail)]$.

In case $[C(\leftarrowtail)]$ is a generalised $-\mathbb{N}$-word and $[C(\rightarrowtail)]$ is a generalised $\mathbb{N}$-word we index the generalised $\mathbb{Z}$-word by $[C(\leftarrowtail\rightarrowtail)]=[B^{\pm}]:[C(\rightarrowtail)]$ in the above notation.
\end{definition}

\begin{example}\label{neswexample}Consider the gentle algebra $\Lambda$ from Example \ref{runningexample} and the generalised word $[C]=[h^{-1}][g^{-1}][(ft)^{-1}][s^{-1}][r^{-1}][yf][x^{-1}]$ from Example \ref{runningexample2}. Here we have $[C(\swarrow)]={}^{\infty}([r][s][t])[C]$ and $[C(\searrow)]=[C] ([x^{-1}])^{\infty}$, and so
\[
[C(\swarrow\searrow)]={}^{\infty}([r][s][t])\, :\, [h^{-1}]\,[g^{-1}]\,[(ft)^{-1}]\,[s^{-1}]\,[r^{-1}]\,[yf]\, ([x^{-1}])^{\infty}.
\]
We may depict $P=P(C(\swarrow\searrow))$ by
\[
\xymatrix@C=.005em@R=1.2em{ 
& & & & & & &\Lambda e_{3}\ar@{--}[lllllll]\ar[dl]^{t}\ar[dr]^{h} & & & & & & & & & &  & P^{0} \ar[d]^{d_{P }^{0}}\ar@{--}[lllllllllll]\\
& & & & & & \Lambda e_{2}\ar@{--}[llllll]\ar[dl]^{s} & & \Lambda e_{4}\ar@{--}[ll]\ar[dr]^{g} & & & & & & & & & & P^{1} \ar[d]^{d_{P }^{1}}\ar@{--}[llllllllll]\\
& & & & & \Lambda e_{5}\ar@{--}[lllll]\ar[dl]^{r} & & & & \Lambda e_{1}\ar@{--}[llll]\ar[dr]^{ft} & & & & & & & & & P^{2} \ar[d]^{d_{P }^{2}}\ar@{--}[lllllllll]\\
& & & & \Lambda e_{3}\ar@{--}[llll]\ar[dl]^{t} & & & & & & \Lambda e_{2}\ar@{--}[llllll]\ar[dr]^{s} & & & & & & & & P^{3} \ar[d]^{d_{P }^{3}}\ar@{--}[llllllll]\\
& & & \Lambda e_{2}\ar@{--}[lll]\ar[dl]^{s} & & & & & & & & \Lambda e_{5}\ar@{--}[llllllll]\ar[dr]^{r} &  & \Lambda e_{0}\ar@{--}[ll]\ar[dl]^{yf}\ar[dr]^{x} & & & & & P^{4} \ar[d]^{d_{P }^{4}}\ar@{--}[lllll]\\
& & \Lambda e_{5}\ar@{--}[ll]\ar[dl]^{r} & & & & & & & & & & \Lambda e_{3}\ar@{--}[llllllllll] & & \Lambda e_{0}\ar@{--}[ll]\ar[dr]^{x}  & & & & P^{5} \ar[d]^{d_{P }^{5}}\ar@{--}[llll]\\
& \Lambda e_{3}\ar@{--}[l]\ar[dl] & & & & & & & & & & & & & &  \Lambda e_{3}\ar@{--}[llllllllllllll]\ar[dr] & & & P^{6} \ar[d]^{d_{P }^{6}}\ar@{--}[lll]\\
\iddots & & & & & & & & & & & & & & & & \ddots &  &  \vdots
}\]
\end{example}

We now describe the string and band complexes in $\mathcal{K}^{+,b}(\Lambda\text{-}\boldsymbol{\mathrm{proj}})$ by adapting the proof of \cite[Lemma 2.7.5]{Ben2018}. The proof is essentially the same, but for completeness we repeat it.
\begin{proposition}\label{proppss}
The following statements hold.
\begin{enumerate}
\item A string complex lies in $\mathcal{K}^{+,b}(\Lambda\text{-}\boldsymbol{\mathrm{proj}})$ if and only it has the form $P(C)$, $P(C(\swarrow))$, $P(C(\searrow))$ or $P(C(\swarrow\searrow))$ for some finite generalised word $[C]$.
\item A shift of a band complex $P(D,V)$ lies in $\mathcal{K}^{+,b}(\Lambda\text{-}\boldsymbol{\mathrm{proj}})$ if and only if the indecomposable $k[T,T^{-1}]$-module $V$ is finite-dimensional over $k$.
\end{enumerate}
\end{proposition}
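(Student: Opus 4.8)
The plan is to test each of the three conditions defining $\mathcal{K}^{+,b}(\Lambda\text{-}\mathbf{proj})$ — finitely generated homogeneous components, left-boundedness, and bounded cohomology — against the combinatorial data of the word, namely the height function $\mu_{C}$ and the kernel description of Corollary \ref{corollarly.10.3}. Throughout I use that an inverse letter raises $\mu_{C}$ by one and a direct letter lowers it by one, so that left-boundedness of $P(C)$ is equivalent to $\mu_{C}$ being bounded below, and finite generation of each $P^{n}(C)$ is equivalent to each level set $\mu_{C}^{-1}(n)$ being finite.

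I would dispatch the band case first, since periodicity makes it short. If $[D]$ is $p$-periodic then $\mu_{D}(i+p)=\mu_{D}(i)$, so $\mu_{D}$ is bounded and $P(D)$ — hence every shift of $P(D,V)$ — is concentrated in finitely many degrees; left-boundedness and bounded cohomology are therefore automatic. It remains to analyse finite generation. Within one period there are only finitely many $i$ with a given value of $\mu_{D}$, and $T$ acts by $b_{i}\mapsto b_{i-p}$, so $P^{n}(D)$ is a finite-rank free $k[T,T^{-1}]$-module and $P^{n}(D,V)=P^{n}(D)\otimes_{k[T,T^{-1}]}V$ is a finite direct sum of copies of $\Lambda e\otimes_{k}V$. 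This is finitely generated over $\Lambda$ for every $n$ if and only if $\dim_{k}V<\infty$, which gives part (2).

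For part (1), the sufficiency direction rests on two observations about the arms produced by Definition \ref{nesw}. First, along a $(\swarrow)$-arm every letter is direct and along a $(\searrow)$-arm every letter is inverse, so $\mu_{C}$ is strictly monotone there and tends to $+\infty$; this pushes the arms into arbitrarily high degrees, keeping $\mu_{C}$ bounded below with finite level sets, so conditions (i) and (ii) hold. Second — and this is where the gentle axioms enter — a maximal arm is acyclic: in the arm the differential is right multiplication by the successive arrows $\alpha_{j}$, consecutive products lie in $\mathcal{J}$, and by Corollary \ref{corollarly.10.3} the kernel parts occurring there are \emph{arms} rather than \emph{full} parts, so conditions (2),(3) on $\Lambda$ force $\ker=\im$ at every vertex of the arm. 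Consequently the cohomology of $P(C(\swarrow\searrow))$, and of the one-armed and armless variants, is concentrated in the finite range of degrees occupied by $[C]$, so condition (iii) holds; the armless case $P(C)$ with $[C]$ finite is immediate, the complex being bounded.

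The necessity direction is the heart of the matter, and bounded cohomology is the main obstacle. Given an aperiodic $[D]$ with $P(D)\in\mathcal{K}^{+,b}(\Lambda\text{-}\mathbf{proj})$, conditions (i),(ii) already force $\mu_{D}$ to be bounded below with finite level sets, hence $\mu_{D}\to+\infty$ in each infinite direction of the word. The decisive step is to read cohomology off Corollary \ref{corollarly.10.3}: a full kernel part (case (a) of Definition \ref{definition.10.5}) occurs exactly at an interior pair $[\gamma^{-1}][\lambda]$, that is, at a local maximum of $\mu_{D}$, and the top generator $b_{i}$ there is hit only by multiples $\gamma b_{i}$, $\lambda b_{i}$ of the adjacent differentials, so it survives in cohomology. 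Bounded cohomology therefore permits only finitely many such peaks, whence in each infinite direction the word eventually contains no inverse-then-direct transition; combined with $\mu_{D}\to+\infty$ this forces the tail to be eventually all inverse on the right and all direct on the left. A final local analysis, again using conditions (2),(3), shows these eventual tails must be the maximal \emph{arrow} arms — a longer-path letter would break the $\ker=\im$ matching and reintroduce cohomology — so they coincide with the canonical extensions of Definition \ref{nesw}. Excising the finitely many peaks and the arms leaves a finite word $[C]$, and $[D]$ is then one of $[C]$, $[C(\swarrow)]$, $[C(\searrow)]$ or $[C(\swarrow\searrow)]$, as required.
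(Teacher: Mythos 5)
Your proof is correct and follows essentially the same route as the paper's: both read the cohomology off the kernel decomposition of Corollary \ref{corollarly.10.3}, use bounded cohomology together with the finiteness of the level sets of $\mu$ to rule out infinitely many full kernel parts and then to force the infinite tails to consist of inverse (resp.\ direct) arrow letters, and dispose of the band case by counting generators in each degree. You additionally spell out the ``if'' direction (acyclicity of the arms and the behaviour of $\mu$ along them), which the paper's proof leaves implicit; that is a welcome completion rather than a divergence.
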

\begin{proof}(i) Let $P(A)$ be the string complex in question, where $[A]$ is an aperiodic generalised $I$-word. Without loss of generality we can assume $I=\mathbb{Z}$. 

Suppose firstly that there is a sequence $(i_{n}\mid n\in\mathbb{N})\in I^{\mathbb{N}}$ such that the $i_{n}^{\text{th}}$ kernel part is full for each $n$. Since $P(A)$ is bounded below $\{\mu_{A}(i_{n})\mid n\in\mathbb{N}\}$ does not have
a lower bound. This means there is a subsequence $(i_{n(r)}\mid r\in\mathbb{N})$
of $(i_{n}\mid n\in\mathbb{N})$ such that $\mu_{A}(i_{n(r)})>\mu_{A}(i_{n(r+1)})$
for all $r$. By definition, for each $r$ we have $b_{i_{n(r)}}\notin\mathrm{im}(d_{P(A)})$, and the assumption on $(i_{n})$ gives $b_{i_{n(r)}}\in\mathrm{ker}(d_{P(A)})$, which contradicts that $P(A)$ has bounded cohomology.

Hence we have shown that there are no sequences $(i_{n}\mid n\in\mathbb{N})\in I^{\mathbb{N}}$
such that the $i_{n}^{\mathrm{th}}$ kernel part is full for each
$n$. So we can choose $l\in I$ such that $[A_{l+1}][A_{l+2}]\dots=[\lambda^{-1}_{1}][\lambda^{-1}_{2}]\dots$
for a sequence of paths $\lambda_{j}\in\mathbf{P}$ where $ \mathrm{f}( \lambda_{j}) \mathrm{l}( \lambda_{j+1})=0$
for each $j\geq1$. Now choose $q\in\mathbb{Z}$ such that $\mathrm{im}(d_{P(A)}^{p-1})=\mathrm{ker}(d_{P(A)}^{p})$
for all $p<q$. Choose $t>l$ such that $\mu_{A}(i)<q$
for each $i>t$. 

If there is some $j>t-l$ where $\lambda_{j}$ has
length greater than $1$ then $d_{P(A)}(b_{l+j})=\lambda_{j}b_{l+j+1}$
and so $ \mathrm{f}( \lambda_{j})b_{l+j+1}\notin\mathrm{im}(d_{P(A)})$.
By Corollary \ref{corollarly.10.3} we have $ \mathrm{f}( \lambda_{j})b_{l+j+1}\in\mathrm{ker}(d_{P(A)})$,
which contradicts that $\mathrm{im}(d_{P(A)}^{n-1})=\mathrm{ker}(d_{P(A)}^{n})$ where $n=\mu_{A}(l+j+1)$.

Hence $\lambda_{j}$ is an arrow for each $j>t-l$. Now let $\gamma_{h}=\lambda_{j+h}$
for each integer $h>0$. Since the quiver $Q$ is finite there is some $h>0$
such that $\gamma_{h}=\gamma_{h+n}$ for some $n>0$, which means
$\gamma_{h}=\gamma_{h+n}$ for each $h>0$. Altogether we have $[A_{t+1}][A_{t+2}]\dots=([\gamma^{-1}_{n}]\dots[\gamma_{1}^{-1}])^{\infty}$ for some $n$. Similarly one can show that there are integers $r,m$ for which $\dots[A_{r-1}][A_{r}]={}^{\infty}([\alpha_{m}]\dots[\alpha_{1}])$ where $\alpha_{i}\in\mathbf{A}$ for each $i$. We now have that $[A]=[C(\swarrow\searrow)]$ where $[C]=[A_{r}]\dots[A_{t+1}]$.

(ii) Any band complex $P(C,V)$ is a bounded complex, whose homogeneous component in degree $n$ is a coproduct of (indecomposable projective modules of the form $\Lambda e_{v}$) which runs through the direct product of $\mu_{C}^{-1}(n)\cap[0,p-1]$ and a $k$-basis of $V$. Hence $\mathrm{dim}_{k}(P^{n}(C,V))<\infty$ if and only if the above product is finite, and the set $\mu_{C}^{-1}(n)\cap[0,p-1]$ is always finite.
\end{proof}
Recall, in the notation from Definition \ref{compactob}, that $\mathcal{K}(\Lambda\text{-}\boldsymbol{\mathrm{Proj}})^{c}$ is the full triangulated subcategory of $\mathcal{K}(\Lambda\text{-}\boldsymbol{\mathrm{Proj}})$ consisting of compact objects. 
\begin{corollary}\label{compactcor}The following statements hold.
\begin{enumerate}
\item Every object in $\mathcal{K}(\Lambda\text{-}\boldsymbol{\mathrm{Proj}})^{c}$ is a coproduct of shifts of string and band complexes. 
\item The string complexes which lie in $\mathcal{K}(\Lambda\text{-}\boldsymbol{\mathrm{Proj}})^{c}$ are precisely those of the form $P(C)$, $P(C(\swarrow))$, $P(C(\searrow))$ or $P(C(\swarrow\searrow))$ for some finite generalised word $[C]$.
\item The string complexes which lie in $\mathcal{K}(\Lambda\text{-}\boldsymbol{\mathrm{Proj}})^{c}$ are precisely those of the form $P(D,V)$ where the indecomposable $k[T,T^{-1}]$-module $V$ is finite-dimensional over $k$.
\end{enumerate}
\end{corollary}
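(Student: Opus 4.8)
The plan is to deduce all three parts formally from three results already in hand: the identification of the compact subcategory in Theorem~\ref{neemantheorem}, the decomposition and indecomposability statements of Theorem~\ref{theorem.1.1}, and the boundedness criterion of Proposition~\ref{proppss}. The organising observation is Neeman's description, recalled as the first part of Theorem~\ref{neemantheorem}, that $\mathcal{K}(\Lambda\text{-}\mathbf{Proj})^{c}=\mathcal{K}^{+,b}(\Lambda\text{-}\mathbf{proj})$; as $\Lambda$ is finite-dimensional, hence coherent, the second part also applies and places us in the compactly generated setting of the earlier sections. Once this identification is fixed, the word \emph{compact} may be replaced throughout by ``left-bounded, finitely generated projective components, bounded cohomology.''

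For part (i) I would take $X\in\mathcal{K}(\Lambda\text{-}\mathbf{Proj})^{c}$, note that by Theorem~\ref{neemantheorem} it lies in $\mathcal{K}^{+,b}(\Lambda\text{-}\mathbf{proj})\subseteq\mathcal{K}(\Lambda\text{-}\mathbf{proj})$, and apply Theorem~\ref{theorem.1.1}(i) verbatim to write $X$ as a coproduct of shifts of string and band complexes. To make this decomposition compatible with the later parts I would also record that direct summands of a compact object are compact, so that each summand here is itself a compact string or band complex; together with the indecomposability in Theorem~\ref{theorem.1.1}(ii) and the standard fact that the identity of a compact object factors through finitely many coproduct summands, this shows the coproduct is finite and each factor is of the form classified in (ii) or (iii).

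Parts (ii) and (iii) are then immediate. A string complex is by construction an object of $\mathcal{K}(\Lambda\text{-}\mathbf{Proj})$, so by Theorem~\ref{neemantheorem} it is compact exactly when it lies in $\mathcal{K}^{+,b}(\Lambda\text{-}\mathbf{proj})$; Proposition~\ref{proppss}(i) lists these as precisely $P(C)$, $P(C(\swarrow))$, $P(C(\searrow))$ and $P(C(\swarrow\searrow))$ for a finite generalised word $[C]$, which is (ii). The identical argument applied to a band complex $P(D,V)$, using Proposition~\ref{proppss}(ii) in place of (i), shows compactness is equivalent to the indecomposable $k[T,T^{-1}]$-module $V$ being finite-dimensional over $k$, which is (iii).

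Since the analytic content is entirely contained in Theorems~\ref{neemantheorem} and~\ref{theorem.1.1} and in Proposition~\ref{proppss}, I do not expect a genuine obstacle; the only step needing care is the bookkeeping in part (i) --- checking that the summands furnished by Theorem~\ref{theorem.1.1} inherit compactness, so that they are governed by the classifications in (ii) and (iii) rather than being unrestricted string or band complexes.
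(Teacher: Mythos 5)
Your proposal is correct and takes essentially the same route as the paper, whose proof simply deduces (i) from Theorems~\ref{neemantheorem} and~\ref{theorem.1.1} and parts (ii)--(iii) from Theorem~\ref{neemantheorem} and Proposition~\ref{proppss}. The extra bookkeeping you supply in part (i) --- that each summand inherits compactness and that the coproduct is in fact finite --- is correct but goes beyond what the statement requires, since (i) only asserts a coproduct decomposition without constraining the summands.
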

\begin{proof}Part (i) follows from Theorems \ref{neemantheorem} and \ref{theorem.1.1}. Parts (ii) and (iii) follow from Theorem \ref{neemantheorem} and Proposition \ref{proppss}.
\end{proof}
\begin{remark}Recall that Theorem \ref{neemantheorem} is a generalisation by Neeman \cite{Nee2008} of a result due to J\o rgenson \cite[Theorem 3.2]{Jor2005}. The benefit of Neemans generality is that one only requires a right coherent ring. We assert that the full generality of Neeman's result was not necessary here. To start, note that since $\Lambda$ is a finite-dimensional $k$-algebra, the jacobson radical $\mathrm{rad}(\Lambda)$ is nilpotent, and the quotient ring $\Lambda/\mathrm{rad}(\Lambda)$ is semisimple. In particular, by a well-known equivalence due to Bass \cite[Theorem P]{Bas1960}, for all integers $n\geq0$, the limit of modules of projective-dimension at most $n$ must have projective-dimension at most $n$. 

By a well-known result of Lazard \cite[Th\'eor\`eme 1.2]{Laz1969}, over any unital ring, every flat module is a limit of free modules, which are projective. Altogether this shows any flat $\Lambda$-module is projective. This shows that the criterion from \cite[Setup 2.1]{Jor2005} are met. Since $\Lambda$ is right noetherian the duality $(-)^{\star}$ given by the functor $\mathrm{Hom}_{\Lambda\text{-}\mathbf{Mod}}(-,\Lambda):\Lambda\text{-}\mathbf{Mod}\to \mathbf{Mod}\text{-}\Lambda$ (taking left modules to right modules) restricts to a functor $(-)^{\star}|:\Lambda\text{-}\mathbf{mod}\to \mathbf{mod}\text{-}\Lambda$ between full subcategories of finitely generated modules. By \cite[Theorem 3.2]{Jor2005} we then have that $(-)^{\star}|$ defines a triangle equivalence $\mathcal{K}^{c}(\Lambda\text{-}\mathbf{Proj})\rightarrow \mathcal{K}^{-,b}(\mathbf{proj}\text{-}\Lambda)$, and from here one may give another proof of \ref{compactcor} as was done, for example, in the proof of \cite[Proposition 3.6]{ArnLakPauPre2017}.
\end{remark}
\section{Linear relations and homotopy words.}\label{linrels}
The proof of \cite[Theorem 1.1]{BenCra2018} uses the so-called functorial filtrations method (as discussed in \S\ref{intro}), whch iwas written in the language of \textit{additive relations} in the sense of Mac Lane \cite{Mac1961}. The aforementioned method depends on a certain splitting result for finite-dimensional $k$-linear relations, see \cite[Theorem 3.1]{GelPon1968}, \cite[\S 2]{Rin1975} and \cite[\S7]{Gab1972}.
\begin{definition}\label{defrelations}
Given $k$-vector spaces $V$ and $W$ a \emph{linear relation from} $V$ \emph{to} $W$ (or \emph{on} $V$ if $W=V$) is a $k$-subspace $\mathscr{R}$ of the coproduct $V\oplus W$. This generalises the graph of a $k$-linear map $V\rightarrow W$. The category $k\text{-}\bf{Rel}$ of linear relations has as objects the pairs $(V,\mathscr{R})$ where $\mathscr{R}$ is a relation on $V$, and has morphisms
$(V,\mathscr{R})\to (W,\mathscr{S})$ given by $k$-linear maps $f:V\to W$
with $(f(x),f(y))\in \mathscr{S}$ for all $(x,y)\in \mathscr{R}$. 
%
Let $\Gamma$ be the Kronecker quiver, given by two arrows $p$ and $q$ with common tail $u$ and common head $v$, and let $k\Gamma$ be the path algebra.

As is well-known, there is an equivalence between the category $k\Gamma\text{-}\bf{Mod}$ of left $k\Gamma$-modules to the category $k\text{-}\textbf{Rep}(\Gamma)$ of $k$-representations $(\phi_{p},\phi_{q}:L_{u}\rightarrow L_{v})$ of $\Gamma$.
Any relation $\mathscr{R}$ on $V$ defines an object $(\pi_{p},\pi_{q}:\mathscr{R}\rightarrow V)$ of $k\text{-}\textbf{Rep}(\Gamma)$ where $\pi_{p}$ and $\pi_{q}$ are the compositions of the inclusion $\mathscr{R}\subseteq V\oplus V$ with the first and second projections $V\oplus V\rightarrow V$. In this way there is a fully-faithful additive functor $k\text{-}\textbf{Rel}\to k\Gamma\text{-}\textbf{Mod}$
whose essential image we denote $k\text{-}\textbf{Rep}(\Gamma)_{\mathrm{rel}}$.
\end{definition}
This shows that one may equip $k\text{-}\bf{Rel}$ with various structural properties inherited from the category $k\Gamma\text{-}\textbf{Mod}$. We document some of the said properties below.
\begin{remark}\label{co/prodclsd}Note $k\Gamma\text{-}\textbf{Mod}_{\mathrm{rel}}$ consists of modules $X$ where $e_{u}X\to e_{v}X\oplus e_{v}X$, $x\mapsto (px,qx)$ is injective. This property is closed under taking equalisers,
products 
and coproducts.
Conseuently, the category $k\text{-}\bf{Rel}$ has all limits and all coproducts, computed by passing back and forth between categories of relations modules. For example, a sequence of relations
\[
0 \to (U,\mathscr{R}) \to (V,\mathscr{S}) \to (W,\mathscr{T})\to 0
\]
is exact provided
that the underlying sequences of $k$-vector spaces \[
0\to U \to V \to W \to 0\text{
 and }0\to \mathscr{R} \to \mathscr{S} \to \mathscr{T} \to 0
\]
are exact. For a set $I$ and an object $(V_{i},\mathscr{R}_{i})$ of $k\text{-}\bf{Rel}$ for each $i$ the set of pairs $((v_{i}),(v'_{i}))$ with $(v_{i},v_{i}')\in \mathscr{R} _{i}$ for each $i$ defines a the product $\prod (V_{i},\mathscr{R}_{i})$ of the objects $(V_{i},\mathscr{R}_{i})$.
Similarly the coproduct $\bigoplus (V_{i},\mathscr{R}_{i})$ is given by the relation on $\bigoplus V_{i}$ consisting of pairs $((v_{i}),(v'_{i}))$ as above, but where additionally $v_{i}=v_{i}'=0$ for all but finitely many $i$. 
\end{remark}
Recall that: an embedding of modules (over a fixed unital ring) is \textit{pure} provided it remains an embedding under any tensor product functor; a module is \textit{pure}-\textit{injective} if it is injective with respect to pure monomorphisms, and $\Sigma$-\textit{pure}-\textit{injective} modules are those for which any small coproduct of copies of it is pure-injective. The coproduct over a singleton shows $\Sigma$-pure-injectives are pure-injective. By the equivalences of (ii) and (vi) in \cite[Theorem 7.1]{JenLen1989}, and of (i) and (ii) in \cite[Theorem 8.1]{JenLen1989}, we can use a categorical definition of pure-injectivity in categories of linear relations.
\begin{definition}
By Remark \ref{co/prodclsd} the category $k\text{-}\bf{Rel}$ has small products and small coproducts. Fix an object $(V,\mathscr{R})$ of $k\text{-}\bf{Rel}$. The universal properties of the products and coproduct define a \emph{summation} map $\sigma_{I}:\bigoplus_{i}(V,\mathscr{R})\to(V,\mathscr{R})$ and a \textit{canonical} map $\iota_{I}:\bigoplus_{i}(V,\mathscr{R})\to\prod_{i}(V,\mathscr{R})$.
We say $(V,\mathscr{R})$ is \textit{pure}-\textit{injective} if, for any set $I$, $\sigma_{I}$ factors through $\iota_{I}$ to extend to a map $\prod_{i} (V,\mathscr{R})\to(V,\mathscr{R})$. We say $(V,\mathscr{R})$ is $\Sigma$-\emph{pure}-\emph{injective} if, for any set $I$, $\sigma_{I}$ is a section.
\end{definition}
Corollary \ref{meandbill} gives a relationship between $\Sigma$-pure-injective linear relations and $\Sigma$-pure-injective $k[T,T^{-1}]$-modules. To state Corollary \ref{meandbill}  we need another definition.
\begin{definition}\label{defsplit}For an object $(V,\mathscr{R})$ of $k\text{-}\bf{Rel}$ let $\mathscr{R}v = \{ w\in W \colon (v,w)\in \mathscr{R} \}$ for any $v\in V$, and for a subset $U\subseteq V$ let $\mathscr{R}U$ be the union $ \bigcup \mathscr{R}u$ over $u\in U$. When $\mathscr{R}$ is the graph of a map $f$ then $\mathscr{R}U$ is the image of $U$ under $f$. Furthermore, let
\[
\begin{array}{c}
\mathscr{R}'' = \{ v\in V : \exists \,(v_{n})\in V^{\mathbb{N}}\text{ with }(v_{n},v_{n+1})\in \mathscr{R}\text{ and }v=v_{0}\},
\\
\mathscr{R}' = \{ v\in V : \exists \,(v_{n})\in V^{\mathbb{N}}\text{ with }(v_{n},v_{n+1})\in \mathscr{R},\,v=v_{0}\text{ and }v_{n} = 0\text{ for }n\gg0\}.
\end{array}
\]
Define subspaces $\mathscr{R}^{\flat}\subseteq \mathscr{R}^{\sharp}\subseteq V$ by
\[
\begin{array}{c}
 \mathscr{R}^\sharp = \mathscr{R}'' \cap (\mathscr{R}^{-1})'',\text{ and }\mathscr{R}^\flat = \mathscr{R}'' \cap (\mathscr{R}^{-1})'+(\mathscr{R}^{-1})'' \cap \mathscr{R}'.
\end{array}
\]
By \cite[Lemma 4.5]{Cra2018} the quotient $\mathscr{R}^{\sharp}/\mathscr{R}^{\flat}$ is a $k[T,T^{-1}]$-module with the action of $T$ given by 
\[
T(v+\mathscr{R}^{\flat})=w+\mathscr{R}^{\flat}\text{ if and only if }w\in \mathscr{R}^{\sharp}\cap(\mathscr{R}^{\flat}+\mathscr{R}v).
\]
We say an object $(V,\mathscr{S})$ of $k\text{-}\bf{Rel}$ is \emph{automorphic} if both projection maps $\mathscr{S}\to V$ are isomorphisms, and that $(V,\mathscr{R})$ is \emph{split} provided that there is a subspace $W$ of $V$ such that $\mathscr{R}^\sharp = \mathscr{R}^\flat \oplus W$ and such that the object $(W,\mathscr{R}\cap(W\oplus W))$ of $k\text{-}\bf{Rel}$ is automorphic \cite[\S 4]{Cra2018}. 
\end{definition}
In the sense of Ringel \cite[\S 2]{Rin1975}, $\mathscr{R}'$ is equal to the \emph{stable kernel} $\bigcup _{n>0}\mathscr{R}^{n}0$, and $\mathscr{R}''$ is a subspace of the \emph{stable image} $\bigcap _{n>0}\mathscr{R}^{n}V$. Furthermore if $\mathrm{dim}_{k}(V)<\infty$ then the inclusion of $\mathscr{R}''$ in the stable image is an equality \cite[Lemma 4.2]{Cra2018}. In joint work with Crawley-Boevey \cite{BenCra2018} we considered $k$-linear relations $(V,\mathscr{R})$ as Kronecker modules, via the first and second projections of $\mathscr{R}$ onto $V$, in order to prove the following.
\begin{corollary}\label{meandbill}\emph{\cite[Corollary 1.3]{BenCra2018}} Let $(V,\mathscr{R})$ be $\Sigma$-pure-injective object of $k\text{-}\bf{Rel}$. Then $(V,\mathscr{R})$ is split and $\mathscr{R}^{\sharp}/\mathscr{R}^{\flat}$ is
a $\Sigma$-pure-injective $k[T,T^{-1}]$-module.
\end{corollary}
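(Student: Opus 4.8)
The plan is to regard $(V,\mathscr{R})$ throughout as a Kronecker module via the fully faithful functor of Definition~\ref{defrelations}. Since by Remark~\ref{co/prodclsd} products and coproducts of relations are computed as in $k\Gamma\text{-}\mathbf{Mod}$, the summation and canonical maps $\sigma_I,\iota_I$ coincide with the module-theoretic ones; together with the identifications of categorical and module-theoretic pure-injectivity recorded before Definition~\ref{defsplit} (via \cite[Theorems 7.1, 8.1]{JenLen1989}), this shows that $(V,\mathscr{R})$ is $\Sigma$-pure-injective in $k\text{-}\mathbf{Rel}$ exactly when the underlying Kronecker module is $\Sigma$-pure-injective. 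I would then invoke the module characterisation \cite[Theorem 8.1]{JenLen1989}: the module $(V,\mathscr{R})$ satisfies the descending chain condition on pp-definable subgroups. This chain condition drives the whole argument.

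The first step is to recognise the subspaces of Definition~\ref{defsplit} as pp-definable data. The iterated forward and backward images and kernels built from $\mathscr{R}$ and $\mathscr{R}^{-1}$ are solution sets of positive-primitive formulas in the Kronecker module, hence pp-definable subgroups of $V$; the descending chain condition forces each such monotone chain to stabilise. Over a finite-dimensional space Crawley-Boevey's \cite[Lemma 4.2]{Cra2018} identifies $\mathscr{R}''$ with the stable image $\bigcap_{n}\mathscr{R}^{n}V$, the point being that an element lying in every finite stage carries a finite chain of each length which can be amalgamated into a single infinite chain. In the present generality finite-dimensionality is unavailable, but its two consequences --- stabilisation of the chains and solvability of the amalgamation --- are supplied respectively by the descending chain condition and by pure-injectivity (algebraic compactness). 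With $\mathscr{R}^{\sharp}$ and $\mathscr{R}^{\flat}$ so described, the $k[T,T^{-1}]$-action of Definition~\ref{defsplit} is available by \cite[Lemma 4.5]{Cra2018}.

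The core of the proof is the splitting assertion, which I expect to be the main obstacle. Restricted to $\mathscr{R}^{\sharp}$ the relation $\mathscr{R}$ becomes invertible in both directions modulo $\mathscr{R}^{\flat}$, so the task is to choose a subspace $W$ with $\mathscr{R}^{\sharp}=\mathscr{R}^{\flat}\oplus W$ on which $\mathscr{R}$ restricts to an automorphic relation. For finite-dimensional relations this is the Fitting-type decomposition of Ringel \cite[\S 2]{Rin1975} and goes back to Gelfand--Ponomarev \cite{GelPon1968}; infinite-dimensionally no such decomposition exists in general. The plan is therefore to build $W$ inductively along the stabilised filtration, at each stage using pure-injectivity to solve the (possibly infinite) linear system that lifts a basis across $\mathscr{R}$, while the chain condition guarantees the construction terminates. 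This is precisely the splitting result for $\Sigma$-pure-injective relations proved in \cite{BenCra2018}, and the difficulty is exactly that one must replace the compactness of a finite-dimensional space by the algebraic compactness coming from pure-injectivity.

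Finally I would transfer $\Sigma$-pure-injectivity to the quotient. Sending an automorphic relation $(W,\mathscr{S})$ to the $k[T,T^{-1}]$-module $W$, with $T$ the composite of one projection $\mathscr{S}\to W$ with the inverse of the other, is an equivalence between automorphic relations and $k[T,T^{-1}]\text{-}\mathbf{Mod}$ whose underlying space is unchanged, so it preserves products, coproducts and the summation maps. Under the splitting this equivalence carries $(W,\mathscr{R}\cap(W\oplus W))$ to $\mathscr{R}^{\sharp}/\mathscr{R}^{\flat}$ with its $T$-action. It therefore remains to see that the quotient inherits $\Sigma$-pure-injectivity from $(V,\mathscr{R})$: applying the functorial quotient construction to a section realising the $\Sigma$-pure-injectivity of $(V,\mathscr{R})$, and using the stabilisation from the chain condition to ensure this construction commutes with the coproducts at issue and sends summation to summation, yields a section for $\mathscr{R}^{\sharp}/\mathscr{R}^{\flat}$. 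Hence $\mathscr{R}^{\sharp}/\mathscr{R}^{\flat}$ is a $\Sigma$-pure-injective $k[T,T^{-1}]$-module, completing the argument.
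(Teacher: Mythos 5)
The first thing to note is that the paper does not prove this statement: Corollary \ref{meandbill} is imported verbatim from \cite[Corollary 1.3]{BenCra2018}, so there is no internal proof to measure your attempt against. Your outline is consistent with the strategy the introduction attributes to that source (pass to the Kronecker module, exploit the descending chain condition on pp-definable subgroups supplied by $\Sigma$-pure-injectivity, and prove a splitting theorem for relations), and your peripheral steps are sound: the identification of the categorical with the module-theoretic notions of ($\Sigma$-)pure-injectivity via Remark \ref{co/prodclsd} and \cite[Theorems 7.1 and 8.1]{JenLen1989} is fine, and so is your replacement of the finite-dimensionality hypothesis of \cite[Lemma 4.2]{Cra2018} by stabilisation --- once the chain of domains of the iterates of $\mathscr{R}$ stabilises, finite chains extend to an infinite one step by step.

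The genuine gap is the splitting itself. Everything else in your write-up is preparation or bookkeeping; the existence of a subspace $W$ with $\mathscr{R}^{\sharp}=\mathscr{R}^{\flat}\oplus W$ such that $(W,\mathscr{R}\cap(W\oplus W))$ is automorphic is the entire content of the corollary, and at exactly that point you write that the construction ``is precisely the splitting result for $\Sigma$-pure-injective relations proved in \cite{BenCra2018}'' --- that is, you cite the statement you are supposed to be proving. ``Build $W$ inductively along the stabilised filtration, using pure-injectivity to solve the linear systems'' is a plan, not an argument: you do not specify the linear systems, explain why their solvability produces a complement that is simultaneously a vector-space complement to $\mathscr{R}^{\flat}$ and carries an automorphic restriction of $\mathscr{R}$, or show that the induction terminates. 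Note also that this is exactly where the finite-dimensional proofs of Gelfand--Ponomarev and Ringel genuinely break down, so the burden of proof is concentrated here. A second, smaller issue: in your final transfer step, $(W,\mathscr{R}\cap(W\oplus W))$ is not a priori a direct summand of $(V,\mathscr{R})$ in $k\text{-}\mathbf{Rel}$, so $\Sigma$-pure-injectivity does not pass to it by summand-closure; your functorial argument can be made to work, but it requires verifying that $\mathscr{R}\mapsto\mathscr{R}^{\sharp}$ and $\mathscr{R}\mapsto\mathscr{R}^{\flat}$ commute with the relevant products and coproducts (the coproduct case for $\mathscr{R}''$ again needs the stabilisation you invoke), and that the induced maps identify the canonical morphisms --- none of which is spelled out.
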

We now recall the language of homotopy words developed in \cite{Ben2018}. The reason for introducing this language is to apply the functorial filtrations method. Each homotopy word will give rise to a linear relation on the underlying subspace of the complex.
\begin{definition}\label{homotopywords}\cite[Definition 1.3.26]{Ben2018} A \textit{homotopy letter $q$} is one of $\gamma$, $\gamma^{-1}$, $d_{\alpha}$,
or $d_{\alpha}^{-1}$ for $\gamma\in\mathbf{P}$ and an arrow $\alpha$. Those of the form $\gamma$ or $d_{\alpha}$ will be called \textit{direct}, and those of the form $\gamma^{-1}$ or $d_{\alpha}^{-1}$ will be called \textit{inverse}. The \textit{inverse $q^{-1}$} of a homotopy letter \textit{$q$} is defined by setting $(\gamma)^{-1}=\gamma^{-1}$, $(\gamma^{-1})^{-1}=\gamma$, $(d_{\alpha})^{-1}=d_{\alpha}^{-1}$ and $(d_{\alpha}^{-1})^{-1}=d_{\alpha}$. 

Again let $I$ be one of $\{0,\dots,m\}$, $\mathbb{N}$, $-\mathbb{N}$ or $\mathbb{Z}$. For $I\neq\{0\}$ by a \textit{homotopy} $I$-\textit{word} we mean a sequence of homotopy letters 
\[
C=\begin{cases}
l_{1}^{-1}r_{1}\dots l_{m}^{-1}r_{m} & (\mbox{if }I=\{0,\dots,m\})\\
l_{1}^{-1}r_{1}l_{2}^{-1}r_{2}\dots & (\mbox{if }I=\mathbb{N})\\
\dots l_{-1}^{-1}r_{-1}l_{0}^{-1}r_{0} & (\mbox{if }I=-\mathbb{N})\\
\dots l_{-1}^{-1}r_{-1}l_{0}^{-1}r_{0}\mid l_{1}^{-1}r_{1}l_{2}^{-1}r_{2}\dots & (\mbox{if }I=\mathbb{Z})
\end{cases}
\]
(which will be written as $C=\dots l_{i}^{-1}r_{i}\dots$ to save
space) such that: 
\begin{enumerate}
\item any homotopy letter in $C$ of the form $l_{i}^{-1}$ (respectively $r_i$) is inverse (respectively direct);
\item any sequence of 2 consecutive letters in $C$, which is of the form $l_{i}^{-1}r_{i}$, is one of
$\gamma^{-1}d_{\mathrm{l}(\gamma)}$ or $d_{\mathrm{l}(\gamma)}^{-1}\gamma$ for some $\gamma\in\mathbf{P}$; and 
\item given we let $[C_{i}]=[\gamma^{-1}]$ if $l_{i}^{-1}r_{i}=d_{\mathrm{l}(\gamma)}^{-1}\gamma$
and $[C_{i}]=[\gamma]$ if $l_{i}^{-1}r_{i}=\gamma^{-1}d_{\mathrm{l}(\gamma)}$, then $[C]=\dots [C_{i}]\dots$ defines a generalised $I$-word.
\end{enumerate}
For $I=\{0\}$ there are \textit{trivial homotopy words}
$ 1 _{v,1}$ and $ 1 _{v,-1}$ for each vertex
$v$.
\end{definition}
\begin{notation}
Let $\mathcal{C}_{\mathrm{min}}(\Lambda\text{-}\mathbf{Proj})$
and $\mathcal{K}_{\mathrm{min}}(\Lambda\text{-}\mathbf{Proj})$ be the full subcategories
of $\mathcal{C}(\Lambda\text{-}\mathbf{Proj})$ and $\mathcal{K}((\Lambda\text{-}\mathbf{Proj})$ consisting of \textit{homotopically minimal} complexes: that is, whose objects $M$ are complexes in $\mathcal{C}(\Lambda\text{-}\mathbf{Proj})$ such that $\im(d_{M}^{n})\subseteq\rad(M^{n+1})$
for all $n\in\mathbb{Z}$. 
\end{notation}
\begin{corollary}
\label{isosreflect} The subcategory $\mathcal{K}_{\mathrm{min}}(\Lambda\text{-}\mathbf{Proj})$ of $\mathcal{K}(\Lambda\text{-}\mathbf{Proj})$ is dense, and the quotient functor $\mathcal{C}_{\mathrm{min}}(\Lambda\text{-}\mathbf{Proj})\to \mathcal{K}_{\mathrm{min}}(\Lambda\text{-}\mathbf{Proj})$ reflects isomorphisms.
\end{corollary}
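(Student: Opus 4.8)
\emph{Plan.} Both assertions rest on two standard properties of the finite-dimensional (hence artinian, hence semiperfect and perfect) algebra $\Lambda$: first, that $\rad(M)=\rad(\Lambda)M$ for every $\Lambda$-module $M$ and that any $\Lambda$-linear map carries radicals into radicals; second, that $\rad(\Lambda)$ is nilpotent, say $\rad(\Lambda)^{N}=0$. I would also use that over the perfect ring $\Lambda$ every projective module is a coproduct of indecomposable projectives $\Lambda e_{v}$, each with local endomorphism ring. Note that the defining condition of $\mathcal{C}_{\mathrm{min}}(\Lambda\text{-}\mathbf{Proj})$, namely $\im(d_{M}^{n})\subseteq\rad(M^{n+1})=\rad(\Lambda)M^{n+1}$, says exactly that the induced map $\bar{d}_{M}^{n}\colon M^{n}/\rad(M^{n})\to M^{n+1}/\rad(M^{n+1})$ on tops vanishes for all $n$.

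\emph{Density.} Given any $M$ in $\mathcal{C}(\Lambda\text{-}\mathbf{Proj})$, I would write each $M^{n}$ as a coproduct of indecomposable projectives and regard $d_{M}$ as a matrix of $\Lambda$-linear maps between these summands. A matrix entry $\Lambda e_{v}\to\Lambda e_{w}$ is an isomorphism precisely when its image is not contained in $\rad(\Lambda e_{w})$ (then it is onto by Nakayama, and splits by projectivity, so is iso by indecomposability), and each such entry exhibits a contractible direct summand $0\to\Lambda e_{v}\xrightarrow{\sim}\Lambda e_{v}\to 0$ that can be removed by the usual Gaussian elimination in $\mathcal{C}(\Lambda\text{-}\mathbf{Proj})$. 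Carrying out all such cancellations yields a decomposition $M\cong N\oplus T$ in $\mathcal{C}(\Lambda\text{-}\mathbf{Proj})$ with $T$ contractible (hence null-homotopic) and $N$ homotopically minimal; thus $M\simeq N$ in $\mathcal{K}(\Lambda\text{-}\mathbf{Proj})$ with $N\in\mathcal{K}_{\mathrm{min}}(\Lambda\text{-}\mathbf{Proj})$, giving density.

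\emph{Reflection of isomorphisms.} Let $f\colon M\to N$ be a morphism of $\mathcal{C}_{\mathrm{min}}(\Lambda\text{-}\mathbf{Proj})$ whose image in $\mathcal{K}_{\mathrm{min}}(\Lambda\text{-}\mathbf{Proj})$ is invertible, and choose a homotopy inverse $g\colon N\to M$, so that $gf-\mathrm{id}_{M}=d_{M}s+sd_{M}$ for some degree $-1$ map $s$. In degree $n$ this reads $(gf)^{n}-\mathrm{id}=d_{M}^{n-1}s^{n}+s^{n+1}d_{M}^{n}$; by minimality of $M$ the first summand has image in $\im(d_{M}^{n-1})\subseteq\rad(\Lambda)M^{n}$, while $d_{M}^{n}$ has image in $\rad(\Lambda)M^{n+1}$ and $s^{n+1}$ is $\Lambda$-linear, so the second summand also has image in $\rad(\Lambda)M^{n}$. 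Hence $(gf)^{n}=\mathrm{id}+r_{n}$ with $\im(r_{n})\subseteq\rad(\Lambda)M^{n}$; iterating and using $\Lambda$-linearity gives $\im(r_{n}^{k})\subseteq\rad(\Lambda)^{k}M^{n}$, so $r_{n}^{N}=0$ and $(gf)^{n}$ is invertible. Therefore $gf$ is an isomorphism in $\mathcal{C}_{\mathrm{min}}(\Lambda\text{-}\mathbf{Proj})$, and symmetrically so is $fg$; since $f$ then has a left inverse $(gf)^{-1}g$ and a right inverse $g(fg)^{-1}$, which must coincide, $f$ is an isomorphism in $\mathcal{C}_{\mathrm{min}}(\Lambda\text{-}\mathbf{Proj})$, as required.

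\emph{Main obstacle.} The reflection argument is formal once the two radical facts are available. The delicate point is density: when the homogeneous components are not finitely generated one must organise possibly infinitely many Gaussian eliminations into a single well-defined decomposition $M\cong N\oplus T$. I would handle this by a transfinite recursion, or by assembling the cancellations simultaneously, in either case invoking the nilpotency of $\rad(\Lambda)$ to ensure that the resulting (infinite) change of basis is a genuine isomorphism of complexes and that the retained complex $N$ is homotopically minimal.
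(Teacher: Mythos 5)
Your treatment of the second assertion (reflection of isomorphisms) is correct but takes a slightly different route from the paper's. You write $(gf)^{n}=\mathrm{id}+r_{n}$ with $\mathrm{im}(r_{n})\subseteq\mathrm{rad}(\Lambda)M^{n}$ and invert $\mathrm{id}+r_{n}$ by a finite geometric series, using nilpotency of $\mathrm{rad}(\Lambda)$; the paper instead passes to the induced maps $\bar{\tau}^{n}\colon M^{n}/\mathrm{rad}(M^{n})\to N^{n}/\mathrm{rad}(N^{n})$, observes by the same radical computation that these are mutually inverse isomorphisms, and then invokes the semiperfect-ring fact that a map of projectives inducing an isomorphism on tops is itself an isomorphism. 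Your version is more self-contained and elementary (at the cost of using the stronger input that $\mathrm{rad}(\Lambda)$ is nilpotent, which is harmless here); the paper's version is the one that generalises beyond the artinian case. Either way the conclusion is sound; the only step worth making explicit is that once each $(gf)^{n}$ is invertible, its inverse is automatically a chain map, so $gf$ is invertible already in $\mathcal{C}_{\mathrm{min}}(\Lambda\text{-}\mathbf{Proj})$.

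The genuine divergence, and the one real gap, is in the density assertion. The paper does no Gaussian elimination: it notes that $\Lambda$ is perfect, so that \emph{every} module has a projective cover, and cites a decomposition theorem to the effect that over a perfect ring any complex of projectives splits as a homotopically minimal complex plus a contractible one. Your entry-by-entry elimination is fine when the homogeneous components are finitely generated, but, as you yourself flag, organising infinitely many cancellations into a single isomorphism $M\cong N\oplus T$ is precisely the non-trivial content of the statement, and ``transfinite recursion invoking nilpotency'' is a plan rather than a proof: one must verify that the infinite change of basis is a genuine automorphism and that no invertible matrix entries survive in the limit. The cleaner argument, which is essentially what the cited result does, is global rather than entry-by-entry: the induced differential $\bar{d}^{n}$ on the semisimple tops satisfies $\mathrm{im}(\bar{d}^{n})\subseteq\ker(\bar{d}^{n+1})$, so one splits each top as $\ker(\bar{d}^{n})\oplus U^{n}$, lifts this to a direct sum decomposition of $M^{n}$ using projective covers (this is where perfectness, not mere semiperfectness, is needed for non--finitely generated components), and reads off the contractible summand in a single step. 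You should either run that argument or cite the decomposition theorem for perfect rings directly, as the paper does.
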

\begin{proof}Since $\Lambda$ is a finite-dimensional $k$-algebra, the jacobson radical $\mathrm{rad}(\Lambda)$ is nilpotent, and the quotient ring $\Lambda/\mathrm{rad}(\Lambda)$ is semisimple. This means $\Lambda$ is a perfect ring, and consequently every object in the category $\Lambda\text{-}\mathbf{Mod}$ of $\Lambda$-modules has a projective cover. Hence the first assertion follows, for example, by \cite[Corollary 3.2.25]{Ben2018}.

Now let $t:M\to N$ be an isomorphism in the category $\mathcal{K}_{\mathrm{min}}(\Lambda\text{-}\mathbf{Proj})$ with inverse $s:N\to M$. Write $\tau:M\to N$ and $\sigma:N\to M$ for the corresponding morphisms in the category $\mathcal{C}_{\mathrm{min}}(\Lambda\text{-}\mathbf{Proj})$. Consider the induced morphisms $\bar{\tau}^{n}:M^{n}/\mathrm{rad}(M^{n})\to N^{n}/\mathrm{rad}(N^{n})$ and $\bar{\sigma}^{n}:N^{n}/\mathrm{rad}(N^{n})\to M^{n}/\mathrm{rad}(M^{n})$ of $\Lambda$-modules (for each $n\in\mathbb{Z}$). By construction the morphisms $\sigma\tau-\mathrm{id}_{M}$ and $\tau\sigma-\mathrm{id}_{N}$ in $\mathcal{C}_{\mathrm{min}}(\Lambda\text{-}\mathbf{Proj})$ are null-homotopic.  

Since $M$ and $N$ are homotopically minimal this means $\bar{\tau}^{n}$ is an isomorphism with inverse $(\bar{\tau}^{n})^{-1}=\bar{\sigma}^{n}$. Since $\Lambda$ is a perfect ring it must be a smeiperfect ring. By \cite[Remark 3.11]{Ben2016} this means that each of the morphisms $\tau^{n}$ is an isomorphism, and so $\tau$ is an isomorphism. This gives the second assertion.
\end{proof}
\begin{definition}\label{dmaps}
\cite[Lemma 2.1.1]{Ben2018} Let $M$ be an object of $\mathcal{K}_{\mathrm{min}}(\Lambda\text{-}\mathbf{Proj})$ and $v$ be a vertex. Write $d_{M}$ for the $\Lambda$-module endomorphism of $M=\bigoplus_{i\in\mathbb{Z}}M^{i}$ given by $m\mapsto\sum_{i}d^{i}_{M}(m)$. Let $d_{v,M}\in\mathrm{End}_{k}(e_{v}M)$ be the restriction of $d_{M}$. By \cite[Lemma 5]{BekMer2003} we have $e_{v}\mathrm{rad}(M)=\bigoplus \beta M$ where $\beta M=\{\beta m\mid m\in e_{t(\beta)}M\}$ and $\beta$ runs through $\mathbf{A}(\rightarrow v)$. For any arrow $\alpha\in\mathbf{A}(\rightarrow v)$ let $\pi_{\alpha}:\bigoplus \beta M\to \alpha M$ and $\iota_{\alpha}:\alpha M\to\bigoplus \beta M$ be the canonical retraction and section.

Define the $k$-linear
endomorphism $d_{\alpha,M}$ of $e_{h(\alpha)}M$ by
\[\begin{array}{c}
d_{\alpha,M}(m)=\iota_{\alpha}(\pi_{\alpha}(d_{v,M}(m)))=\alpha m_{\alpha}\text{ where }d_{v,M}(m)=\sum_{\beta\in\mathbf{A}(\rightarrow v)}\beta m_{\beta}.
\end{array}
\]
Consequently, for any vertex $v$, $d_{v,M}=\sum d_{\beta,M}$ where $\beta$ runs through $\mathbf{A}(\rightarrow v)$.
\end{definition}
\begin{remark}It is worth noting some properties of the maps $d_{\alpha,M}$ from Definition \ref{dmaps}. These properties form part of \cite[Lemma 2.1.2]{Ben2018}. For any $\tau\in\mathbf{P}$ and any $x\in e_{t(\tau)}M$:
\begin{enumerate}
\item if there exists $\sigma\in\mathbf{A}$ such that $\tau\sigma\in\mathbf{P}$ then $d_{ \mathrm{l}( \tau),M}(\tau x)=\tau d_{\sigma,M}(x)$;
\item  if $\tau\sigma\notin\mathbf{P}$ for all $\sigma\in\mathbf{A}$ then $d_{ \mathrm{l}( \tau),M}(\tau x)=0$; 
\item if $h(\theta)=h(\tau)$ for
some arrow $\theta\neq \mathrm{l}(\tau)$ then $d_{\theta,M}(\tau x)=0$;
\item if $h(\phi)=h(\tau)$ for some arrow $\phi$ then $d_{\phi,M}d_{ \mathrm{l}( \tau),M}=0$; and
\item if $\tau x\in\mathrm{im}(d_{ \mathrm{l}( \tau),M})$ then $d_{\varsigma,M}(x)=0$ for any $\varsigma\in\mathbf{A}$ where $\tau\varsigma\in\mathbf{P}$.
\end{enumerate}
\end{remark}

\begin{definition}\label{defsubspaces}
Let $M$ be an object of $\mathcal{K}_{\mathrm{min}}(\Lambda\text{-}\mathbf{Proj})$. Let $\gamma\in\mathbf{P}$, $\alpha\in\mathbf{A}$ and let $U$, $V$ and $W$ be $k$-subspaces of $e_{t(\gamma)}M$, $e_{h(\gamma)}M$ and $e_{h(\alpha)}M$ respectively. We define and denote various subspaces of $M$ by
\[
\begin{array}{cc}
\gamma U=\{\gamma m\in e_{h(\gamma)}M\mid m\in U\}, & \gamma^{-1}V=\{m\in e_{t(\gamma)}M\mid\gamma m\in V\},\\
d_{\alpha}W=\{d_{\alpha,M}(m)\in e_{h(\alpha)}M\mid m\in W\}, & d_{\alpha}^{-1}W=\{m\in e_{h(\alpha)}M\mid d_{\alpha,M}(m)\in W\}.
\end{array}
\]
For a vertex $v$ and a subset $X$ of $e_{v}M$ let $ 1 _{v,\pm1}X=X$. When $U=e_{t(\gamma)}M$ let $\gamma M=\gamma U$ and when $U=e_{t(\gamma)}\mathrm{rad}(M)$ let $\gamma\mathrm{rad}(M)=\gamma U$. The subsets $\gamma^{-1}M$, $d_{\alpha}M$, $d^{-1}_{\alpha}M$, $\gamma^{-1}\mathrm{rad}(M)$, $d_{\alpha}\mathrm{rad}(M)$ and $d^{-1}_{\alpha}\mathrm{rad}(M)$ are defined similarly.
\end{definition}
\begin{example}\label{runningexample3}Consider the gentle algebra $\Lambda$ from Example \ref{runningexample} and the generalised word $[C]=[h^{-1}][g^{-1}][(ft)^{-1}][s^{-1}][r^{-1}][yf] [x^{-1}]$ from Example \ref{runningexample2}. The homotopy word of $[C]$ is
\[
C=d_{h}^{-1}h\,\,d_{g}^{-1}g\,\,d_{f}^{-1}(ft)\,\,d_{s}^{-1}s\,\, d_{r}^{-1}r\,\,(yf)^{-1}d_{y}\,\,d_{x}^{-1}x.
\]
Fix an object $M$ of $\mathcal{K}_{\mathrm{min}}(\Lambda\text{-}\mathbf{Proj})$ and a subspace $V$ of $e_{0}M$. By Definition \ref{defsubspaces} we have that $CV$ is given by the set of $m\in e_{3}M$ such that
\[\text{there exists } 
\left(\begin{array}{c}
m_{0}\in e_{3}M,m_{1}\in e_{4}M,\\
m_{2}\in e_{1}M,m_{3}\in e_{2}M,\\
m_{4}\in e_{5}M,m_{5}\in e_{3}M,\\
m_{6}\in e_{0}M,m_{7}\in V.
\end{array}\right)
\text{with}
\left(\begin{array}{c}m=m_{0},d_{h,M}(m_{0})=hm_{1},\\
d_{g,M}(m_{1})=gm_{2},d_{f,M}(m_{2})=ftm_{3},\\
d_{s,M}(m_{3})=sm_{4},d_{r,M}(m_{4})=rm_{5},\\
yfm_{5}=d_{y,M}(m_{6}),d_{x,M}(m_{6})=xm_{7}.
\end{array}\right)
\]
The elements of $CV$ fit into a schema given by the homotopy word $C$. See, for example, \cite[Examples 2.1.6 and 2.1.8]{Ben2018} and \cite[Example 6.5]{Ben2016}.
\end{example}
We now recall how the constructions from Definition \ref{defsubspaces} are compatible, in some sence, with graded subspaces of a complex.
\begin{corollary}
\label{corgradedspace}\emph{\cite[Corollary 2.2.3]{Ben2018}} Let $M$ be an object of $\mathcal{K}_{\mathrm{min}}(\Lambda\text{-}\mathbf{Proj})$, let $n\in\mathbb{Z}$, let $C$ be a homotopy $\{0,\dots,t\}$-word with $h(C)=u$ and $h(C^{-1})=v$, and for all $i\in\mathbb{Z}$ let $X^{i}$ and $Y^{i}$
be subspaces of $e_{v}M^{i}$ and $e_{u}M^{i}$ respectively. Then $Y^{n}\cap C(\bigoplus_{i}X^{i})=Y^{n}\cap CX^{n+\mu_{C}(t)}$. 
\end{corollary}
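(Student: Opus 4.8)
The plan is to prove the two inclusions separately, with essentially all the content residing in the forward inclusion via a homogeneity argument. The reverse inclusion $Y^{n}\cap CX^{n+\mu_{C}(t)}\subseteq Y^{n}\cap C(\bigoplus_{i}X^{i})$ is immediate, since $X^{n+\mu_{C}(t)}\subseteq\bigoplus_{i}X^{i}$ and the operation $C(-)$ of Definition \ref{defsubspaces} is monotone in its argument: each of the four basic operations $\gamma U$, $\gamma^{-1}U$, $d_{\alpha}U$, $d_{\alpha}^{-1}U$ preserves inclusions of subspaces, being either an image or a preimage.

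For the forward inclusion I would first record the crucial point, that every basic operation building $C(-)$ is homogeneous for the grading $M=\bigoplus_{j}M^{j}$. Multiplication by a path $\gamma\in\mathbf{P}$ carries $e_{t(\gamma)}M^{j}$ into $e_{h(\gamma)}M^{j}$ and so has degree $0$; and since $M$ is a complex, $d_{M}$ raises degree by one, whence by Definition \ref{dmaps} each $d_{\alpha,M}$ sends $e_{h(\alpha)}M^{j}$ into $e_{h(\alpha)}M^{j+1}$ and so has degree $+1$. Consequently the passage across a consecutive pair $l_{i}^{-1}r_{i}$, which by Definition \ref{homotopywords} is either $\gamma^{-1}d_{\mathrm{l}(\gamma)}$ (generalised letter $[\gamma]$, with $H[\gamma]=-1$) or $d_{\mathrm{l}(\gamma)}^{-1}\gamma$ (generalised letter $[\gamma^{-1}]$, with $H[\gamma^{-1}]=+1$), shifts homogeneous degree by exactly $H[C_{i}]$, matching the bookkeeping $\mu_{C}(i)=\mu_{C}(i-1)+H[C_{i}]$.

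Next I would unwind $C(\bigoplus_{i}X^{i})$ into its schema, exactly as in Example \ref{runningexample3}: an element of $C(\bigoplus_{i}X^{i})$ is precisely an $m=m_{0}\in e_{u}M$ for which there exist $m_{1},\dots,m_{t}$ with $m_{t}\in\bigoplus_{i}X^{i}$ and, for each $i$, the equation relating $m_{i-1}$ and $m_{i}$ dictated by $l_{i}^{-1}r_{i}$ (either $d_{\mathrm{l}(\gamma),M}(m_{i-1})=\gamma m_{i}$ or $\gamma m_{i-1}=d_{\mathrm{l}(\gamma),M}(m_{i})$). Now suppose in addition $m\in Y^{n}$, so that $m_{0}$ is homogeneous of degree $n=n+\mu_{C}(0)$. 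For each $i$ let $m_{i}'$ be the homogeneous component of $m_{i}$ in degree $n+\mu_{C}(i)$. Each defining equation lives entirely in a single graded piece: by the degree computation above, both its sides are homogeneous of the common degree $n+\max\{\mu_{C}(i-1),\mu_{C}(i)\}$. Hence applying the projection onto that graded piece turns the equation into the identical equation for the primed elements, so $(m_{0}',\dots,m_{t}')$ is again a witnessing sequence. Since $m_{0}$ was already homogeneous we have $m_{0}'=m$, and $m_{t}'$ is the degree-$(n+\mu_{C}(t))$ component of $m_{t}\in\bigoplus_{i}X^{i}$, so $m_{t}'\in X^{n+\mu_{C}(t)}$. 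Therefore $m\in CX^{n+\mu_{C}(t)}$, which gives the forward inclusion.

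I expect the main obstacle to be purely organisational, namely verifying cleanly that the projection onto degree $n+\mu_{C}(i)$ respects every equation of the schema. This reduces to the two observations that $d_{\mathrm{l}(\gamma),M}$ is homogeneous of degree $+1$ and path multiplication is homogeneous of degree $0$, together with $\mu_{C}(i)=\mu_{C}(i-1)+H[C_{i}]$; the bookkeeping is routine but should be carried out for both shapes of $l_{i}^{-1}r_{i}$ (i.e.\ both signs of $H[C_{i}]$) to confirm in each case that the two sides of the equation genuinely share a single homogeneous degree.
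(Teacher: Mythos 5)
Your proof is correct. Note that the paper itself gives no argument for Corollary \ref{corgradedspace} --- it is quoted from \cite[Corollary 2.2.3]{Ben2018} --- so there is nothing in this document to compare against line by line; but your argument is the natural one and is complete. The two key points both check out: the reverse inclusion is indeed immediate from monotonicity of images and preimages, and for the forward inclusion the degree bookkeeping is right in both cases, since $d_{\alpha,M}$ is homogeneous of degree $+1$ (the splitting $e_{v}\mathrm{rad}(M)=\bigoplus_{\beta}\beta M$ is a splitting of graded subspaces, so $\pi_{\alpha}$ and $\iota_{\alpha}$ have degree $0$) while path multiplication has degree $0$, so projecting the equation attached to $l_{i}^{-1}r_{i}$ onto the single graded piece in which both sides live yields the same equation for the components $m_{i}'$ in degree $n+\mu_{C}(i)$, for either sign of $H[C_{i}]$.
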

\section{Compact morphisms of homotopy words.}\label{seccompact}
We can now interpret the notion of the \textit{sign} of a homotopy word, so that it is consistent with Definition \ref{generalisedsign}, as follows.
\begin{definition}\cite[Definition 2.1.11]{Ben2018} If $\gamma\in\mathbf{P}$ let $s(\gamma)=s( \mathrm{l}( \gamma))$ and $s(\gamma^{-1})=s( \mathrm{f}( \gamma)^{-1})$. If $\alpha\in\mathbf{A}$ let $s(d_{\alpha}^{\pm1})=-s(\alpha)$. For a homotopy $I$-word $C$ with $\{0\}\neq I\subseteq \mathbb{N}$, the head and sign of $[C]$ are the head and sign of the first
homotopy letter of $C$. For any vertex $v$ let $s(1_{v,\pm1})=\pm1$
and $h(1_{v,\pm1})=v$. Now let $D$ and $E$ be homotopy words where $I_{D^{-1}}\subseteq\mathbb{N}$ and $I_{E}\subseteq\mathbb{N}$. The \textit {composition} $DE$ is the concatenation of the homotopy letters in $D$ with those in $E$, and the result is a homotopy word
if and only if $[D][E]$ is a generalised word (that is, if and only if $h(D^{-1})=h(E)$ and $s(D^{-1})=-s(E)$). In case $I_{D}=I_{E}=\mathbb{N}$ the bar $\mid$ is placed between $D$ and $E$ just as the colon $:$ is placed between $[D]$ and $[E]$.

We write $\mathcal{W}_{v,\delta}$ for the set of homotopy $I$-words with $I\subseteq \mathbb{N}$, head $v$ and sign $\delta$.
\end{definition}
\begin{assumption}In the remainder of \S\ref{seccompact} let $v$ be a vertex, let $\delta\in\{1,-1\}$ and let $C\in\mathcal{W}_{v,\delta}$ be a homotopy $\{0,\dots,t\}$-word.
\end{assumption}
In what follows, we show that for a fixed object $M$ of $\mathcal{K}_{\mathrm{min}}(\Lambda\text{-}\mathbf{Proj})$ the subspaces $CV$ from Definition \ref{defsubspaces} are given by evaluating functions arising in a pp-definable subgroup of $M$ of a particular sort; see Lemma \ref{noice}. In Definition \ref{compactmorph} we associate, to each such $C$, a morphism in $\mathcal{K}(\Lambda\text{-}\mathbf{Proj})^{c}$. For reasons that become clear in the proof of Corollary \ref{coolbeans}, we require that the sort of the aformentioned pp-definable subgroups depend only on $v$ and $\delta$ (that is, that the domain of the above morphism) is independent of the choice of homotopy word $C\in\mathcal{W}_{v,\delta}$. To proceed we require more notation. 
\begin{notation}\cite[Definition 2.1.17]{Ben2018} If $B=\dots l_{i}^{-1}r_{i}\dots$
is a homotopy word and $i\in I_{B}$, let $B_{i}=l_{i}^{-1}r_{i}$
and $B_{\leq i}=\dots l_{i}^{-1}r_{i}$ given $i-1\in I_{B}$, and
otherwise $B_{i}=B_{\leq i}= 1 _{h(B),s(B)}$. Similarly let $B_{>i}=l_{i+1}^{-1}r_{i+1}\dots$ given $i+1\in I_{B}$ and
otherwise $B_{>i}= 1 _{h(B^{-1}),-s(B^{-1})}$. Similarly we can define the homotopy words $B_{<i}$ and $B_{\geq i}$ with $B_{\leq i}=B_{<i}B_{i}$
and $B_{i}B_{>i}=B_{\geq i}$.  
\end{notation}
Recall Definition \ref{nesw}.
\begin{definition}\label{compactmorph}Let $v$ be a vertex, let $\delta=\pm 1$, let $m\in\mathbb{N}$ and let $C\in\mathcal{W}_{v,\delta}$ be a homotopy $\{0,\dots,t\}$-word with $C^{-1}\in\mathcal{W}_{u,\varepsilon}$. Write $I$ for the subset of $ \mathbb{Z}$ such that $C(\swarrow\searrow)$ is a homotopy $I$-word. If $1_{v,\delta}(\swarrow)$ is a homotopy $-\mathbb{N}$-word let $n=0$, and otherwise choose $n\in I$ (unique) such that $C(\swarrow\searrow)_{\leq n}=1_{v,\delta}(\swarrow)$. By the \textit{compact morphism given by} $C$, we mean the morphism
\[
a\langle C\rangle:P(1_{v,\delta}(\swarrow\searrow))\to P(C(\swarrow\searrow))\text{ in }\mathcal{K}_{\mathrm{min}}(\Lambda\text{-}\mathbf{Proj})
\]
defined, by case analysis, as follows. Let $E=1_{v,\delta}(\swarrow\searrow)$.
\begin{enumerate}
\item If $E=C(\swarrow\searrow)$ let $a\langle C\rangle$ be the identity on $P(E)$.
\end{enumerate}
Suppose now $E\neq C(\swarrow\searrow)$. This means $C(\searrow)=Bl^{-1}rD$ where  either $B=1_{v,\delta}$ or $B=d_{\alpha(1)}^{-1}\alpha(1)\dots d_{\alpha(q)}^{-1}\alpha(q)$ with $\alpha(1),\dots,\alpha(q)\in\mathbf{A}$, and where $l^{-1}r$ does not have the form $d_{\alpha}^{-1}\alpha$ with $\alpha\in\mathbf{A}$. In case  $B=1_{v,\delta}$ let $q=0$.
\begin{enumerate}\setcounter{enumi}{1}
\item If $l^{-1}r=\gamma^{-1}d_{\mathrm{l}(\gamma)}$ let $a\langle C\rangle(b_{i,E})=b_{i,C}$ when $i\leq n+q$, and $a\langle C\rangle(b_{i,E})=0$ otherwise.
\item If $l^{-1}r=d^{-1}_{\mathrm{l}(\gamma)}\gamma$ where $\gamma=\mathrm{l}(\gamma)\lambda$ for some $\lambda\in\mathbf{P}$ let $a\langle C\rangle(b_{i,E})=b_{i,C}$ when $i\leq n+q$, let $a\langle C\rangle(b_{n+q+1,E})=\lambda b_{n+q+1,C}$ and let $a\langle C\rangle(b_{i,E})=0$ otherwise.
\end{enumerate}
\end{definition}
\begin{remark}It is straightword to check that the assignment from Definition \ref{compactmorph} gives a morphism of chain complexes, and hence, as implied, we can and will consider $a\langle C\rangle$ as a morphism in $\mathcal{K}_{\mathrm{min}}(\Lambda\text{-}\mathbf{Proj})$. Furthermore, as our terminology suggests,  $a\langle C\rangle$ defines a morphism in $\mathcal{K}(\Lambda\text{-}\mathbf{Proj})^{c}$, since the string complexes $P(1_{v,\delta}(\swarrow\searrow))$ and $P(C(\swarrow\searrow))$ are compact by Corollary \ref{compactcor}.
\end{remark}
\begin{lemma}\label{nicemaps}Let $B$ be a homotopy $\mathbb{N}$-word and let $a_{l}=a\langle B_{\leq l}\rangle$ for all $l\in\mathbb{N}$. Then $a_{l+1}$ factors through $a_{l}$ for all $l$, and hence for any object $M$ of $\mathcal{K}_{\mathrm{min}}(\Lambda\text{-}\mathbf{Proj})$ we have
\[
Ma_{1}\supseteq Ma_{2} \supseteq \dots\supseteq Ma_{i} \supseteq \dots
\]
\end{lemma}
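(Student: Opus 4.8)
The plan is to exhibit, for each $l$, an explicit factorisation $a_{l+1} = a_l \circ \theta_l$ for some morphism $\theta_l : P(1_{v,\delta}(\swarrow\searrow)) \to P(1_{v,\delta}(\swarrow\searrow))$ in $\mathcal{K}_{\mathrm{min}}(\Lambda\text{-}\mathbf{Proj})$, where both $a_l$ and $a_{l+1}$ share the common domain $P(1_{v,\delta}(\swarrow\searrow))$ (this is precisely the point of Definition \ref{compactmorph}: the domain depends only on $v$ and $\delta$, not on the finite word). Once such a factorisation is in hand, the chain of inclusions follows formally: applying the functor $\mathcal{T}(-,M)$ and taking images gives
\[
Ma_{l+1} = \mathrm{im}\,\mathcal{T}(a_l\theta_l, M) = \{ f a_l \theta_l \mid f \in \mathcal{T}(\mathrm{cod}\,a_{l+1}, M)\} \subseteq \{ g a_l \mid g \in \mathcal{T}(\mathrm{cod}\,a_l, M)\} = Ma_l,
\]
since $\mathcal{T}(a_l\theta_l, M) = \mathcal{T}(\theta_l, M)\circ\mathcal{T}(a_l,M)$ shows every element $f a_l \theta_l$ of $Ma_{l+1}$ is of the form $(f\theta_l)a_l$, hence lies in $Ma_l$.

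\textbf{First} I would set up the bookkeeping. Write $B = \dots l_i^{-1} r_i \dots$ and for each $l$ set $C^{(l)} = B_{\leq l}$, so that $C^{(l+1)} = C^{(l)} B_{l+1}$ is obtained from $C^{(l)}$ by appending one further two-letter block $l_{l+1}^{-1} r_{l+1}$. The key observation is that the closures $C^{(l)}(\swarrow\searrow)$ and $C^{(l+1)}(\swarrow\searrow)$ agree on a common left-bounded initial segment, and $a_l$, $a_{l+1}$ are defined on the same domain $P(E)$ with $E = 1_{v,\delta}(\swarrow\searrow)$ by the same basis-element formulas (cases (ii), (iii) of Definition \ref{compactmorph}) up to the index $n+q$ determined by the maximal direct-arrow prefix. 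I would compare the two definitions directly on basis elements $b_{i,E}$, checking that the threshold index for $a_{l+1}$ dominates that for $a_l$, so that $a_{l+1}$ sends a (weakly) larger initial segment of basis vectors to their images in $P(C^{(l+1)}(\swarrow\searrow))$.

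\textbf{The main obstacle} is constructing $\theta_l$ so that $a_l \theta_l$ literally reproduces $a_{l+1}$ on the nose (or up to homotopy), because the codomains $P(C^{(l)}(\swarrow\searrow))$ and $P(C^{(l+1)}(\swarrow\searrow))$ differ: appending the block $B_{l+1}$ changes the string complex by adjoining material. The cleanest route is to \emph{not} factor through the codomain but to realise $\theta_l$ as an endomorphism of the common domain $P(E)$ implementing the shift in threshold, and then verify $a_l \theta_l = a_{l+1}$ by the explicit basis formulas; the delicate case is (iii), where $a\langle C\rangle$ carries a basis element to a scalar-path multiple $\lambda\, b_{n+q+1,C}$, and one must check the path $\lambda$ and the index shift are compatible between the two words. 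Here I would invoke that $B_{\leq l}$ and $B_{\leq l+1}$ share the word data up to position $l$, so the maps agree below the new threshold and the factorisation amounts to precomposing with the natural truncation/inclusion endomorphism of $P(E)$ that kills the basis vectors beyond the old threshold. The verification that this is a genuine chain map reduces to the same routine check already used to justify Definition \ref{compactmorph}, together with the remark that minimality lets us argue in $\mathcal{K}_{\mathrm{min}}(\Lambda\text{-}\mathbf{Proj})$ without homotopy corrections.

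Finally, with the factorisation established for every $l$, the descending chain $Ma_1 \supseteq Ma_2 \supseteq \dots$ is immediate from the image computation above, completing the proof. I would present this last step as a one-line functoriality argument rather than belabouring it.
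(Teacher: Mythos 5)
Your factorisation goes in the wrong direction, and this is a genuine gap rather than a presentational issue. The morphisms $a_{l}$ and $a_{l+1}$ share the domain $P(1_{v,\delta}(\swarrow\searrow))$ but have \emph{different} codomains, namely $P(B_{\leq l}(\swarrow\searrow))$ and $P(B_{\leq l+1}(\swarrow\searrow))$. An equation $a_{l+1}=a_{l}\circ\theta_{l}$ with $\theta_{l}$ an endomorphism of the common domain would force these codomains to coincide, so it cannot hold except in the degenerate case $B_{\leq l+1}(\searrow)=B_{\leq l}(\searrow)$, where $a_{l}=a_{l+1}$ anyway. You notice that the codomains differ but then propose to ``not factor through the codomain''; that is exactly the step that cannot be avoided. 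The assertion that $a_{l+1}$ factors through $a_{l}$ must be read as $a_{l+1}=f_{l}\circ a_{l}$ for a morphism $f_{l}\colon P(B_{\leq l}(\swarrow\searrow))\to P(B_{\leq l+1}(\swarrow\searrow))$ between the codomains, and the whole content of the paper's proof is the explicit construction of this $f_{l}$ on basis elements (identity up to a threshold index, a path multiple $\mu b_{p+q+2}$ at the threshold, zero beyond it), by the same case analysis used to define $a\langle C\rangle$ in Definition \ref{compactmorph}.

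Your derivation of the chain of inclusions inherits the same error. From $a_{l+1}=a_{l}\theta_{l}$ you write $fa_{l}\theta_{l}=(f\theta_{l})a_{l}$, but $f\theta_{l}$ is not even composable ($f$ has domain the codomain of $a_{l}$, while $\theta_{l}$ has codomain the domain of $a_{l}$), and precomposition only gives $Ma_{l+1}=\mathcal{T}(\theta_{l},M)(Ma_{l})$, which need not lie inside $Ma_{l}$. With the correct factorisation $a_{l+1}=f_{l}a_{l}$ the inclusion is immediate, since $ha_{l+1}=(hf_{l})a_{l}$ for every $h\colon P(B_{\leq l+1}(\swarrow\searrow))\to M$, which places $ha_{l+1}$ in $Ma_{l}$. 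A further warning: the ``truncation endomorphism of $P(E)$ killing basis vectors beyond the old threshold'' that you invoke is typically not a chain map, since $d(b_{N})$ involves $\alpha b_{N+1}$, so killing $b_{N+1}$ while fixing $b_{N}$ fails to commute with the differential; the case analysis in Definition \ref{compactmorph} exists precisely to control this boundary behaviour, and your $\theta_{l}$ does not have access to it.
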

\begin{proof}Let $B=l_{1}^{-1}r_{1}l^{-1}_{2}r_{2}\dots$. Fix $l\in\mathbb{N}$. Let $B_{\leq l}(\swarrow\searrow)=B'$ and $B_{\leq l+1}(\swarrow\searrow)=B''$. It suffices to assume $1_{v,\delta}(\swarrow\searrow)\neq B''$, since otherwise $a_{ l+1}$  and $a_{l}$ are both just the identity on $P(1_{v,\delta}(\swarrow\searrow))$. Likewise we can assume $B_{\leq l+1}(\searrow)\neq B_{\leq l}(\searrow)$, since otherwise we have $a_{l}=a_{l+1}$. We assume for simplicity that $B_{\leq l+1}(\searrow)=Ed^{-1}_{\mathrm{l}(\gamma)}\gamma D$ for homotopy words $D$ and $E$ where  $E=d_{\alpha(1)}^{-1}\alpha(1)\dots d_{\alpha(q)}^{-1}\alpha(q)$ with $\alpha(1),\dots,\alpha(q)\in\mathbf{A}$, and $\gamma=\mathrm{l}(\gamma)\lambda$ for some $\lambda\in\mathbf{P}$. The other cases are similar.

 Note $B'_{\leq n}=B''_{\leq n}=1_{v,\delta}(\swarrow)$. Without loss of generality assume $n=0$. It is required that we define a morphism $f:P(B')\to P(B'')$. Again, for simplicity we can assume $D=Cd^{-1}_{\mathrm{l}(\eta)}\eta A$ for homotopy words $C$ and $A$ where  $C=d_{\beta(1)}^{-1}\beta(1)\dots d_{\beta(p)}^{-1}\beta(p)$ with $\beta(1),\dots,\beta(q)\in\mathbf{A}$, and $\gamma=\mathrm{l}(\gamma)\lambda$ for some $\mu\in\mathbf{P}$. From here it suffices to let $f(b_{i,B'})=b_{i,B''}$ when $i\leq p+q+1$, let $a\langle C\rangle(b_{p+q+2,B'})=\mu b_{p+q+2,B''}$ and let $a\langle C\rangle(b_{i,E})=0$ otherwise.
\end{proof}
Lemma \ref{noice} is a key realisation toward the proof of Theorem \ref{maincor}. Recall, from Definition \ref{ppdefsub}, that a pp-definable subgroup of $M$ of sort $X$ is some set $Ma$ of morphisms $X\to M$ of the form $fa$, where $a:X\to Y$ is a fixed morphism in $\mathcal{K}(\Lambda\text{-}\mathbf{Proj})^{c}$, and $f:Y\to M$ varies.
\begin{lemma}\label{noice}Let $M$ be an object of $\mathcal{K}_{\mathrm{min}}(\Lambda\text{-}\mathbf{Proj})$. As in Definition \ref{compactmorph}, let $C\in \mathcal{W}_{u,\delta}$ be a homotopy $\{0,\dots,t\}$-word, let $I=I_{C(\swarrow\searrow)}$, let $n=0$ if $1_{v,\delta}(\swarrow)$ is a homotopy $-\mathbb{N}$-word and otherwise choose $n\in I$ with $C(\swarrow\searrow)_{\leq n}=1_{v,\delta}(\swarrow)$. Let $r\in\mathbb{Z}$ and $a=a\langle C\rangle[r]$, the morphism $P(1_{v,\delta}(\swarrow\searrow))[r]\to P(C(\swarrow\searrow))[r]$ given by the degree $r$ shift of $a\langle C \rangle$.

Then we have $e_{v}M^{r}\cap CM=\{g(b_{n,1_{v,\delta}(\swarrow\searrow)})\mid g\in Ma\}$ where, as in Definition \ref{ppdefsub}, $Ma$ is the set of morphisms of the form $fa$ with $f:P(C(\swarrow\searrow))[r]\to M$ in $\mathcal{K}_{\mathrm{min}}(\Lambda\text{-}\mathbf{Proj})$.
\end{lemma}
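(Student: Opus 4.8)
The plan is to first strip away the compact morphism and reduce the asserted equality to a statement purely about chain maps out of the string complex $P(C(\swarrow\searrow))[r]$. By the construction in Definition \ref{compactmorph} we have $a\langle C\rangle(b_{n,1_{v,\delta}(\swarrow\searrow)})=b_{n,C(\swarrow\searrow)}$ (this is the index $i=n\le n+q$ in cases (2) and (3), and the identity in case (1)). Hence for $g=fa\in Ma$ with $f\colon P(C(\swarrow\searrow))[r]\to M$ we obtain $g(b_{n,1_{v,\delta}(\swarrow\searrow)})=f(b_{n,C(\swarrow\searrow)})$, so it suffices to prove
\[
e_{v}M^{r}\cap CM = \{\, f(b_{n,C(\swarrow\searrow)}) \mid f\colon P(C(\swarrow\searrow))[r]\to M \text{ in } \mathcal{K}_{\mathrm{min}}(\Lambda\text{-}\mathbf{Proj}) \,\}.
\]
Throughout I would represent morphisms of $\mathcal{K}_{\mathrm{min}}(\Lambda\text{-}\mathbf{Proj})$ by genuine chain maps in $\mathcal{C}_{\mathrm{min}}(\Lambda\text{-}\mathbf{Proj})$, which is legitimate by Corollary \ref{isosreflect}, and only ever evaluate them on the free generators $b_{j}$.

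For the inclusion $\supseteq$, fix such an $f$ and set $m_{i}=f(b_{n+i,C(\swarrow\searrow)})$ for $0\le i\le t$, the indices running along the finite subword $C$. Since $f$ preserves the homological grading and commutes with the $e_{v}$-action, and $b_{n}$ sits at vertex $v$ in degree $r$, we get $m_{0}=f(b_{n})\in e_{v}M^{r}$. The differential of a string complex multiplies a generator by the full path of the adjacent generalised letter, so the chain-map identity $d_{M}(m_{i})=f(b_{n+i}^{-})+f(b_{n+i}^{+})$, read component by component through the decomposition $d_{M}=\sum_{\beta}d_{\beta,M}$, records in the $\mathrm{l}(\gamma)$-component exactly one word relation: namely $d_{\mathrm{l}(\gamma),M}(m_{i-1})=\gamma m_{i}$ when $[C_{i}]=[\gamma^{-1}]$, and $d_{\mathrm{l}(\gamma),M}(m_{i})=\gamma m_{i-1}$ when $[C_{i}]=[\gamma]$. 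That the other components do not interfere, so that $d_{\alpha,M}$ isolates precisely the stated relation, follows from admissibility of the pairs $[C_{i-1}][C_{i}]$ together with the properties of the maps $d_{\alpha,M}$ recorded after Definition \ref{dmaps}. The tuple $(m_{0},\dots,m_{t})$ is then a witness that $m_{0}\in CM$, and Corollary \ref{corgradedspace} lets me pass between $CM$ and its restriction to the homogeneous piece $e_{v}M^{r}$, so the arms are irrelevant here.

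For the inclusion $\subseteq$, take $x\in e_{v}M^{r}\cap CM$ with a witnessing tuple $m_{0}=x,m_{1},\dots,m_{t}$ for the finite word $C$, and define $f$ on the middle generators by $f(b_{n+i,C(\swarrow\searrow)})=m_{i}$; reversing the computation above shows the chain-map identity holds at these generators. It then remains to extend $f$ over the (finite or infinite) peripheral arms of $C(\swarrow\searrow)$ attached at $b_{n}$ and $b_{n+t}$. Here I would proceed recursively outward, at each arm generator setting $f(b_{j})$ equal to the component of the residual value of $d_{M}$ along the arrow labelling the next differential, using the definition of $d_{\alpha,M}$ to verify the chain-map condition and the vanishing properties (iii)--(iv) after Definition \ref{dmaps} to suppress the unwanted components; consistency and exhaustion of each arm are guaranteed by the maximality built into Definition \ref{nesw} and the gentle-algebra axioms.

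The step I expect to be the main obstacle is precisely this extension over the peripheral arms in the $\subseteq$ direction: one must check that the residual components of the differential at the two ends of $C$ flow \emph{exactly} along the arm arrows, so that the recursively defined $f$ really is a globally well-defined chain map on the possibly infinite complex $P(C(\swarrow\searrow))[r]$, rather than merely a partial assignment. The forward direction, by contrast, is a bookkeeping translation between the path-multiplication differentials of the string complex and the $d_{\alpha,M}$-relations defining $CM$, made rigorous by those same properties of $d_{\alpha,M}$ and by Corollary \ref{corgradedspace}.
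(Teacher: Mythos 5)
Your proposal is correct and follows essentially the same route as the paper's own proof: both reduce, via $a\langle C\rangle(b_{n,1_{v,\delta}(\swarrow\searrow)})=b_{n,C(\swarrow\searrow)}$, to identifying $e_{v}M^{r}\cap CM$ with the values $f(b_{n,C(\swarrow\searrow)})$ over genuine chain maps $f$, read off the witnessing tuple from the middle generators in one direction, and in the other extend the tuple recursively along the peripheral arms using the fact that $d_{\gamma,M}(m)\in\gamma M$ (Definition \ref{dmaps}, i.e.\ homotopical minimality) to choose each successive $m_{n-(h+1)}$ with $d_{\gamma,M}(m_{n-h})=\gamma m_{n-(h+1)}$. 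The arm-extension step you flag as the main obstacle is exactly the step the paper carries out, and your sketch of it is the paper's argument.
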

We prove Lemma \ref{noice} after Example \ref{examplecompactmorph}.
\begin{example}\label{examplecompactmorph}Consider the gentle algebra $\Lambda$ from Example \ref{runningexample} and the generalised word $[C]=[h^{-1}][g^{-1}][(ft)^{-1}][s^{-1}][r^{-1}][yf][x^{-1}]$ from Example \ref{runningexample2}. Recall 
$[C(\swarrow\searrow)]={}^{\infty}([r][s][t]):[C]([x^{-1}])^{\infty}$ by Example \ref{neswexample}. Similarly $[1_{3,\delta}(\swarrow\searrow)]={}^{\infty}([r][s][t]):([h^{-1}][g^{-1}][f^{-1}])^{\infty}$. The compact morphism $a=a\langle C\rangle$ may be depicted by
\[
\xymatrix@C=.005em@R=1.2em{ 
& & & & & & &\Lambda e_{3}\ar[dl]_{t}\ar[dr]^{h} & & & & & & & & & &  & \\
& & & & & & \Lambda e_{2}\ar[dl]_{s} & & \Lambda e_{4}\ar[dr]^{g} & & & & & & & & & & \\
& & & & & \Lambda e_{5}\ar[dl]_{r} & & \Lambda e_{3}\ar@{=}[uu]_(0.35){a^{0}}\ar[dl]^{t}\ar[dr]_{h} & & \Lambda e_{1}\ar[dr]^{ft} & & & & & & & & & \\
& & & & \Lambda e_{3}\ar[dl]_{t} & & \Lambda e_{2}\ar@{=}[uu]_(0.65){a^{1}}\ar[dl]^{s} & & \Lambda e_{4}\ar@{=}[uu]_(0.35){a^{1}}\ar[dr]_{g} & & \Lambda e_{2}\ar[dr]^{s} & & & & & & & & \\
& & & \Lambda e_{2}\ar[dl]_{s} & & \Lambda e_{5}\ar@{=}[uu]_(0.65){a^{2}}\ar[dl]^{r} & & & & \Lambda e_{1}\ar@{=}[uu]_(0.35){a^{2}}\ar[dr]_{f} & & \Lambda e_{5}\ar[dr]^{r} &  & \Lambda e_{0}\ar[dl]^{yf}\ar[dr]^{x} & & & & & \\
& & \Lambda e_{5}\ar[dl]_{r} & & \Lambda e_{3}\ar[dl]^{t}\ar@{=}[uu]_(0.65){a^{3}} & & & & & & \Lambda e_{3}\ar[dr]_{h}\ar[uu]_(0.35){a^{3}}^{t} & & \Lambda e_{3} & & \Lambda e_{0}\ar[dr]^{x}  & & & & \\
& \Lambda e_{3}\ar[dl] & \iddots & \Lambda e_{2}\ar[dl]^{s}\ar@{=}[uu]_(0.65){a^{4}} & & & & & & & & \Lambda e_{4}\ar[dr]_{g} & & & &  \Lambda e_{3}\ar[dr] & & & \\
\iddots & & \iddots & & & & & & & & & & \ddots & & & & \ddots &  &  
}\]
where the morphisms $a^{n}:P^{n}(1_{3,\delta}(\swarrow\searrow))\to P^{n}(C(\swarrow\searrow))$ are directed vertically upward, and the abscence of an arrow labelled $a^{n}$ indicates that this summand of $P^{n}(1_{3,\delta}(\swarrow\searrow))$ is annihilated by $a$. By Corollary \ref{corgradedspace}, for any object $M$ of $\mathcal{K}_{\mathrm{min}}(\Lambda\text{-}\mathbf{Proj})$ the subspace $CM\subseteq e_{3}M$ is given by the set of $m\in e_{3}M^{0}$ such that there exists a sequence $m_{0},\dots,m_{7}\in M$ satisying the relations listed in Example \ref{runningexample3}.

By definition, $d_{t,M}(m_{0})=tm_{-1}$ for some $m_{-1}\in e_{2}M$,  $d_{s,M}(m_{-1})=tm_{-2}$ for some $m_{-2}\in e_{5}M$, and so on. Similarly $d_{x,M}(m_{7})=xm_{8}$ for some $m_{8}\in e_{0}M$, $d_{x,M}(m_{8})=xm_{9}$ for some $m_{9}\in e_{0}M$, $d_{x,M}(m_{9})=xm_{10}$ for some $m_{10}\in e_{0}M$ and so on. Choosing $g\in Ma$ is equivalent to choosing a $\Lambda$-module homomorphism $f^{n}:P^{n}(C(\swarrow\searrow))\to M^{n}$ for each $n\in\mathbb{Z}$ such that the collection $f=(f^{n}\mid n\in\mathbb{Z})$ is compatible with differentials. To do so extend $f^{n}(b_{i,C(\swarrow\searrow)})=m_{i}$ linearly over $\Lambda$ for each $i\in\mu^{-1}_{C}(n)$. That $f$ is compatible with differentials follows by construction: for example,
\[d^{0}_{M}(m)=tm_{-1}+hm_{1}=f^{1}(d^{0}_{t,P(C(\swarrow\searrow))}(b_{0,C(\swarrow\searrow)})+d^{0}_{h,P(C(\swarrow\searrow))}(b_{0,C(\swarrow\searrow)})).
\]
\end{example}
\begin{proof}[of Lemma \ref{noice}] Without loss of generality we simplify to the case where $r=0$. By Definitions \ref{ppdefsub} and \ref{compactmorph} it suffices to show $CM= X$ where
\[
X=\{f(b_{n,C(\swarrow\searrow)})\mid f:P(C(\swarrow\searrow))\to M\text{ in }\mathcal{C}_{\mathrm{min}}(\Lambda\text{-}\mathbf{Proj})\}.
\] 
Let $m\in e_{v}M^{0}\cap CM$, and so there exists $m_{n},\dots,m_{n+t}\in M$ such that $m=m_{n}$ and $l_{i}m_{i}=r_{i}m_{i+1}$ for all $i$ with $n\leq i<t$. To show $m\in X$ we construct a sequence $(m_{i}\mid i\in I)$ of elements $m_{i}\in M$ such that $b_{i,C(\swarrow\searrow)}\mapsto m_{i}$ defines a morphism $f:P(C(\swarrow\searrow))\to M$. 

Without loss of generality assume $C$ is non-trivial, say $C=l_{1}^{-1}r_{1}\dots l_{t}^{-1}r_{t}$. Let $I_{-}$ and $I_{+}$ be the subsets of $\mathbb{N}$ for which $(C_{\leq n})^{-1}$ is a homotopy $I_{-}$-word and $C_{>n+t}$  is a homotopy $I_{+}$-word. Note that $n-h,n+t+j\in I$ for all $h\in I_{-}$ and all $j\in I_{+}$. We begin by iteratively constructing $m_{n-h}\in M$ for all $h\in I_{-}$ and $m_{n+t+j}\in M$ for all $j\in I_{+}$, noting that $m_{n}$ and $m_{n+t}$ have already been defined. Suppose that $h\in I_{-}$, $m_{n-h}$ has been defined and that $h+1\in I_{-}$. By construction $((C_{\leq n})^{-1})_{h+1}=d_{\gamma}^{-1}\gamma$ for some arrow $\gamma$. Furthermore $\mathrm{im}(d_{\gamma,M})\subseteq \gamma M$, and we choose $m_{n-(h+1)}\in e_{t(\gamma)}M$ such that $d_{\gamma,M}(m_{n-h})=\gamma m_{n-(h+1)}$. 

Similarly if ($j\in I_{+}$, $m_{n+t+j}$ has been defined and $j+1\in I_{+}$) then $(C_{>n+t})_{j+1}=d_{\beta}^{-1}\beta$ for some arrow $\beta$, and we choose $m_{n+t+(j+1)}\in e_{t(\beta)}M$ such that $d_{\beta,M}(m_{n+t+j})=\beta m_{n+t+(j+1)}$. It is straightforward to check that $f(b_{i,C(\swarrow\searrow)})= m_{i}$ ($i\in I$) satisfies $f(b^{+}_{i,C(\swarrow\searrow)}+b^{-}_{i,C(\swarrow\searrow)})= d_{M}(m_{i})$ for all $i$. This is done by separting the cases $i<n$, $i=n$, $n<i<n+t$, $i=n+t$ and $i>n+t$. The cases $i<n$ and $i>n+t$ are similar. As are the cases $i=n$ and $i=n+t$. Note that $f$ is graded of degree $0$ by Corollary \ref{corgradedspace}. This shows $e_{v}M^{0}\cap CM\subseteq X$. The proof that $X\subseteq e_{v}M^{0}\cap  CM$ is similar, easier, and omitted. 
\end{proof}
\section{$\Sigma$-pure-injective complexes and split relations.}\label{splitsection}
\begin{assumption} Throughout \S\ref{splitsection} we let $M$ be an object of $\mathcal{K}_{\mathrm{min}}(\Lambda\text{-}\mathbf{Proj})$, $v$ be a vertex, let $\delta\in\{1,-1\}$ and let $C\in\mathcal{W}_{v,\delta}$ be a homotopy-word.
\end{assumption}
In Definition \ref{definonesidedfunctors} we recall functors, defined more generally in \cite{Ben2018}, of the form $C^{\pm}:\mathcal{C}_{\mathrm{min}}(\Lambda\text{-}\boldsymbol{\mathrm{Proj}})\rightarrow k\text{-}\boldsymbol{\mathrm{Mod}}$ where $C$ is a homotopy $I$-word with $I\subseteq \mathbb{N}$. For this we recall some inclusions in the lattice of subspaces of $e_{h(C)} M$ of the form $CM$.
\begin{remark}By \cite[Corollary 2.1.9]{Ben2018}, if $\alpha\in\mathbf{A}$ then $\alpha^{-1}d_{\alpha}\rad(M)\subseteq e_{t(\alpha)}\rad(M)$, and furthermore given $\beta\in\mathbf{P}$ with $\alpha\beta\in\mathbf{P}$ we
have $(\alpha\beta)^{-1}\alpha d_{\beta}M=\beta{}^{-1}d_{\beta}M$. By \cite[Corollary 2.1.10]{Ben2018} if $\lambda,\eta,\gamma,\sigma,\lambda\eta\in\mathbf{P}$, $h(\gamma)=h(\sigma)$ and $ \mathrm{l}( \gamma)\neq  \mathrm{l}( \sigma)$ then we have the following inclusions
\[
\begin{array}{c}
\begin{array}{ccc}
\eta^{-1}d_{ \mathrm{l}( \eta)}M\subseteq(\lambda\eta)^{-1}d_{ \mathrm{l}( \lambda)}M, &  & d_{ \mathrm{l}( \lambda)}^{-1}\lambda\eta M\subseteq d_{ \mathrm{l}( \lambda)}^{-1}\lambda M,\end{array}\\
\begin{array}{ccccc}
\lambda^{-1}d_{ \mathrm{l}( \lambda)}M\subseteq d_{ \mathrm{l}( \eta)}^{-1}\eta0, &  & \gamma M\subseteq d_{ \mathrm{l}( \sigma)}^{-1}\sigma0, &  & d_{ \mathrm{l}( \sigma)}M\subseteq d_{ \mathrm{l}( \sigma)}^{-1}\sigma0.\end{array}
\end{array}
\]
Let $\alpha,\beta\in\mathbf{A}$ and let $C\alpha^{-1}d_{\alpha}$ and $Cd_{\beta}^{-1}\beta$ be homotopy words. By \cite[Corollary 2.1.15]{Ben2018} we have that: if $\gamma'\in\mathbf{P}$ is longer than $\gamma\in\mathbf{P}$ and $\mathrm{f}(\gamma')=\mathrm{f}( \gamma)=\alpha$ then $C\gamma^{-1}d_{ \mathrm{l}( \gamma)}M\subseteq C\gamma'^{-1}d_{ \mathrm{l}( \gamma')}M$; and if $\tau'\in\mathbf{P}$ is longer than $\tau\in\mathbf{P}$ and  $\mathrm{l}( \tau')=\mathrm{l}( \tau)=\beta$ then $Cd_{\beta}^{-1}\tau'M\subseteq Cd_{\beta}^{-1}\tau M$.
\end{remark}
We now define subfunctors $C^{+}$, $C^{-}$ and ($C$ for when $C$ is finite) of the forgetful functor $\mathcal{C}_{\mathrm{min}}(\Lambda\text{-}\boldsymbol{\mathrm{Proj}})\rightarrow k\text{-}\boldsymbol{\mathrm{Mod}}$ (see Remark \ref{rem6.10}). The inclusions above are used to determine compatibility properties between these functors.
\begin{definition}
\label{definonesidedfunctors}\cite[Definition 2.1.17]{Ben2018} Suppose $C$ is finite. If $a$ is an arrow and $Cd_{a}^{-1}a$ is
a homotopy word let $C^{+}(M)$ be the intersection $\bigcap_{\beta} Cd_{a}^{-1}\beta\rad(M)$ 
over $\beta\in\mathbf{P}$ with
$\mathrm{l}( \beta)=a$. By \cite[Lemma 2.1.19]{Ben2018}, if there are finitely many such $\beta$ then $C^{+}(M)= Cd_{a}^{-1}0$, and otherwise $C^{+}(M)=\bigcap_{\beta} Cd_{a}^{-1}\beta M$. If there is no such arrow $a$ we let $C^{+}(M)=CM$.

If there exists an arrow $b$ where $Cb^{-1}d_{b}$ is a homotopy word let $C^{-}(M)$ be the union $\bigcup_{\alpha} C\alpha^{-1}d_{ b}M$  over all $\alpha\in\mathbf{P}$ with
$\mathrm{f}(\alpha)=b$. Otherwise let $C^{-}(M)=C(\sum d_{c(+)}M+\sum c(-)M)$
where $c(\pm)$ runs through all arrows with head $h(C^{-1})$ and
sign $\pm s(C^{-1})$.

Suppose instead $C$ is a homotopy $\mathbb{N}$-word. Let $C^{+}(M)$ be the set of all $m\in e_{v}M$ with
a sequence of elements $(m_{i})\in\prod_{i\in\mathbb{N}}e_{v_{C}(i)}M$
satisfying $m_{0}=m$ and $m_{i}\in l_{i+1}^{-1}r_{i+1}m_{i+1}$ for
each $i\geq0$, and let $C^{-}(M)$ be the subset of $C^{+}(M)$ where
each sequence $(m_{i})$ is eventually zero.
\end{definition}
\begin{remark}
\label{rem6.10}By \cite[Corollary 6.13]{Ben2016} (see also \cite[Corollary 2.1.20]{Ben2018}) we have that:
\begin{enumerate}
\item the assignments $M\mapsto C^{+}(M)$, $M\mapsto C^{-}(M)$ and ($M\mapsto CM$ for when $C$ is finite) respectively define subfunctors $C^{+}$, $C^{-}$ and ($C$ for when $C$ is finite) of the forgetful functor $\mathcal{C}_{\mathrm{min}}(\Lambda\text{-}\boldsymbol{\mathrm{Proj}})\rightarrow k\text{-}\boldsymbol{\mathrm{Mod}}$ such that $C^{-}\leq C^{+}$  and ($C^{+}\leq C$ when $C$ is finite);
\item if $I_{C}$ is finite then the functor $C$ preserves small coproducts and products; and
\item the functors $C^{\pm}$ preserve small coproducts.
\end{enumerate}
\end{remark}
\begin{corollary}\label{coolbeans}If $M$ is $\Sigma$-pure-injective and $C$ is a homotopy $\mathbb{N}$-word then for each $r\in\mathbb{Z}$ we have $e_{v}M^{r}\cap C^{+}(M)=e_{v}M^{r}\cap C_{\leq l}M$ for some $l\in\mathbb{N}$.
\end{corollary}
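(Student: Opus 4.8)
The plan is to identify each space $e_{v}M^{r}\cap C_{\leq l}M$ with (the evaluation of) a pp-definable subgroup of a fixed sort, invoke the descending chain condition coming from $\Sigma$-pure-injectivity to stabilise the resulting chain, and then run a Mittag--Leffler-type continuation argument to upgrade finite continuability to infinite continuability.

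First I would record the easy inclusion: restricting an infinite witnessing sequence to its initial segment shows $C^{+}(M)\subseteq C_{\leq l}M$ for every $l$, so $e_{v}M^{r}\cap C^{+}(M)\subseteq W_{l}:=e_{v}M^{r}\cap C_{\leq l}M$ for all $l$. Next, applying Lemma \ref{noice} to each finite truncation $C_{\leq l}$, and using the uniformity built into Definition \ref{compactmorph} (the morphisms $a_{l}:=a\langle C_{\leq l}\rangle[r]$ all share the domain $X=P(1_{v,\delta}(\swarrow\searrow))[r]$ and the distinguished element $x=b_{n,1_{v,\delta}(\swarrow\searrow)}$), one gets $W_{l}=\{g(x)\mid g\in Ma_{l}\}$, exhibiting $W_{l}$ as the image of the pp-definable subgroup $Ma_{l}$ of sort $X$ under the fixed $k$-linear evaluation map $g\mapsto g(x)$. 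By Lemma \ref{nicemaps} the $Ma_{l}$ form a descending chain of pp-definable subgroups, so the equivalence of (1) and (3) in Theorem \ref{characterisation} (using that $M$ is $\Sigma$-pure-injective) forces this chain to stabilise. As the image of a stabilising chain under a fixed linear map, $(W_{l})$ stabilises too, say $W_{l}=W_{l_{0}}$ for all $l\geq l_{0}$.

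It then remains to prove $W_{l_{0}}\subseteq e_{v}M^{r}\cap C^{+}(M)$. Fixing $m\in W_{l_{0}}$, stabilisation gives that $m$ admits a continuation of every finite depth, and I would build an infinite continuation one step at a time. Writing $B=C_{>0}$ for the tail word and $w,s$ for the vertex and degree of the next term, the admissible first terms form a decreasing chain of \emph{nonempty} sets $U_{l}=\{m_{1}\in e_{w}M^{s}\mid (m,m_{1})\text{ satisfies the first relation of }C,\ m_{1}\in B_{\leq l}M\}$, each of which is a coset of $K\cap(e_{w}M^{s}\cap B_{\leq l}M)$, where $K$ is the solution space of the associated homogeneous relation (e.g. $\ker d_{\mathrm{l}(\gamma),M}$ when $l_{1}^{-1}r_{1}=d_{\mathrm{l}(\gamma)}^{-1}\gamma$). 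By the same reasoning as above, applied now to $B$, the subspaces $e_{w}M^{s}\cap B_{\leq l}M$ satisfy the descending chain condition; intersecting with the fixed subspace $K$ preserves this, so the coset directions stabilise and nested nonempty cosets of a fixed subspace coincide, whence $\bigcap_{l}U_{l}\neq\emptyset$. Any $m_{1}\in\bigcap_{l}U_{l}$ satisfies the first relation with $m$ and lies in $B_{\leq l}M$ for every $l$, so the construction repeats with $(B,m_{1})$ in place of $(C,m)$. Iterating yields a sequence $(m_{i})_{i\geq0}$ witnessing $m\in C^{+}(M)$, with all degree bookkeeping controlled by Corollary \ref{corgradedspace} (and $C^{+}$ as in Definition \ref{definonesidedfunctors}); taking $l=l_{0}$ gives the claim.

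The hard part is exactly this last inclusion $W_{l_{0}}\subseteq C^{+}(M)$, i.e. passing from ``continuable to every finite depth'' to ``continuable to infinite depth''. Here $\Sigma$-pure-injectivity is used essentially and repeatedly: not only to stabilise the single chain $(W_{l})$, but to supply, via the descending chain condition on the pp-definable subgroups attached to every tail $C_{>i}$, the nonemptiness of the relevant inverse limits of affine subspaces, so that the step-by-step continuation never gets stuck.
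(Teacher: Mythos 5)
Your proposal's core is exactly the paper's proof: via Lemmas \ref{noice} and \ref{nicemaps} each $e_{v}M^{r}\cap C_{\leq l}M$ is realised as the evaluation of the pp-definable subgroup $Ma_{l}$ of the single fixed sort $P(1_{v,\delta}(\swarrow\searrow))[r]$, and the chain is stabilised using the equivalence of (i) and (iii) in Theorem \ref{characterisation}. The only difference is that you also spell out the Mittag--Leffler step identifying the stable value with $e_{v}M^{r}\cap C^{+}(M)$ (nested nonempty cosets of a stabilising chain of subspaces, applied to every tail $C_{>i}$), which the paper's proof leaves implicit; this is correct, apart from the harmless slip that when $l_{1}^{-1}r_{1}=d_{\mathrm{l}(\gamma)}^{-1}\gamma$ the homogeneous solution space is $\{m_{1}\mid\gamma m_{1}=0\}$ rather than $\ker(d_{\mathrm{l}(\gamma),M})$.
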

\begin{proof}Note that there is a unique integer $n$ such that for all $l\in\mathbb{N}$ the word $B(l)=C_{\leq l}(\swarrow\searrow)$ satisfies $B(l)_{\leq n}
=1_{v,\delta}(\swarrow)$, as in Definition \ref{compactmorph}. Let $X=P(1_{v,\delta}(\swarrow\searrow)[r]$, and for each $l$ let $Y_{l}=P(B(l))[r]$. By Corollary \ref{compactcor}(ii) the objects $X$ and $Y_{l}$ are all compact objects in the triangulated category $\mathcal{K}(\Lambda\text{-}\boldsymbol{\mathrm{Proj}})$. In the notation of Lemma \ref{noice}, for each $l\in\mathbb{N}$ let $a_{l}=a\langle B(l)\rangle[r]$, which is a morphism $X\to Y_{l}$. By Lemma \ref{noice} we have
\[
e_{v}M^{r}\cap C_{\leq l}M=\{g(b_{n,1_{v,\delta}(\swarrow\searrow)})\mid g\in Ma_{l}\}
\]
By Lemma \ref{nicemaps} we have a descending chain
\[
Ma_{1}\supseteq Ma_{2} \supseteq \dots\supseteq Ma_{i} \supseteq \dots
\]
of pp-definable subgroups of $M$ of sort $X$. By assumption the object $M$ of the compactly generated (by Theorem \ref{neemantheorem}) triangulated category $\mathcal{K}(\Lambda\text{-}\boldsymbol{\mathrm{Proj}})$ is $\Sigma$-pure-injective. By the equivalence of (i) and (iii) in Theorem \ref{characterisation}, the above chain stabilises. Altogether this shows there is some $l$ for which $e_{v}M^{r}\cap C_{\leq l}M=e_{v}M^{r}\cap C_{\leq l+i}M$ for all $i\in\mathbb{N}$.
\end{proof}
\begin{definition}\label{Polaroid}
\cite[Definition 2.2.10]{Ben2018} If $B$ and $D$ are homotopy words such that $C=B^{-1}D$ is a $p$-periodic homotopy word then there is a homotopy word $E=l_{1}^{-1}r_{1}\dots l_{p}^{-1}r_{p}$ with
\[ \begin{array}{c}
     B= r_{p}^{-1}l_{p} \dots r_{1}^{-1}l_{1}r_{p}^{-1}l_{p}\dots r_{1}^{-1}l_{1}r_{p}^{-1}l_{p}\dots\text{ and } \\
     D=l_{1}^{-1}r_{1}\dots l_{p}^{-1}r_{p}l_{1}^{-1}r_{1}\dots l_{p}^{-1}r_{p}l_{1}^{-1}r_{1}\dots
\end{array}
\]
In this case we write $D=E^{\,\infty}$, $B=(E^{-1})^{\,\infty}$ and $C={}^{\infty}E{}^{\,\infty}$. For each $n\in\mathbb{Z}$ let 
\[
E_{M}(n)=E\cap(e_{v}M^{n}\oplus e_{v}M^{n})=\{(m,m')\in e_{v}M^{n}\oplus e_{v}M^{n}\mid m'\in Em\}.
\]
Note that $E_{M}(n)$ is a linear relation on $e_{v}M^{n}$, and hence we can consider the object $(e_{v}M^{n},E_{M}(n))$ of the category $k\text{-}\mathbf{Rel}$ from Definition \ref{defrelations}.

By \cite[Lemma 4.5]{Cra2018}  there is a $k$-vector space automorphism of $E_{M}(n)^{\sharp}/E_{M}(n)^{\flat}$ defined by sending $m+E_{M}(n)^{\flat}$ to $m'+E_{M}(n)^{\flat}$
if and only if $m'\in E_{M}(n)^{\sharp}\cap(E_{M}(n)^{\flat}+E_{M}(n)m)$. 
\end{definition}
\begin{lemma}\label{cardylemma}Let $n\in\mathbb{Z}$ and $M$ be an object of $\mathcal{K}_{\mathrm{min}}(\Lambda\text{-}\mathbf{Proj})$, and let $\kappa$ be the cardinality of the structure $\mathsf{M}$ underlying $M$. Let $E=l_{1}^{-1}r_{1}\dots l_{p}^{-1}r_{p}$ be a homotopy word such that ${}^{\infty}E{}^{\,\infty}$ is a ($p$-periodic) homotopy $\mathbb{Z}$-word, as in Definition \ref{Polaroid}. Let $\Gamma$ be the  Kronecker quiver. 

Then the $k\Gamma$-module associated to the object $(e_{v}M^{n}, E_{M}(n))$ of \emph{$k\text{-}\textbf{Rel}$}, as in Definition \ref{defrelations}, has cardinality at most $3\kappa$.
\end{lemma}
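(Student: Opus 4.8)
The plan is to transport the statement across the fully faithful functor $k\text{-}\mathbf{Rel}\to k\Gamma\text{-}\mathbf{Mod}$ of Definition~\ref{defrelations} and then bound the two vertex spaces of the resulting module separately. Writing $X$ for the $k\Gamma$-module associated to $(e_{v}M^{n},E_{M}(n))$, by construction $X$ is the representation $(\pi_{p},\pi_{q}\colon E_{M}(n)\to e_{v}M^{n})$, so that $e_{u}X=E_{M}(n)$ and $e_{v}X=e_{v}M^{n}$. The underlying set of $X$ is the disjoint union $E_{M}(n)\sqcup e_{v}M^{n}$ of its two sorts, so it suffices to establish $|e_{v}M^{n}|\leq\kappa$ and $|E_{M}(n)|\leq 2\kappa$.

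The heart of the argument is the bound $|e_{v}M^{n}|\leq\kappa$, and here I would apply Lemma~\ref{noice} to the trivial homotopy word $C=1_{v,\delta}$. Since $1_{v,\delta}M=e_{v}M\supseteq e_{v}M^{n}$ we have $e_{v}M^{n}\cap 1_{v,\delta}M=e_{v}M^{n}$, and in this case the compact morphism of Definition~\ref{compactmorph} is the identity $a=\mathrm{id}_{X_{0}}$ on $X_{0}=P(1_{v,\delta}(\swarrow\searrow))[n]$, so that $Ma=\mathcal{K}(X_{0},M)$. Lemma~\ref{noice} (with $r=n$) therefore identifies $e_{v}M^{n}$ with the image of the evaluation map $g\mapsto g(b_{n,1_{v,\delta}(\swarrow\searrow)})$ on $\mathcal{K}(X_{0},M)$. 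The object $X_{0}$ is compact by Corollary~\ref{compactcor}(ii), so choosing $G\in\mathcal{S}$ with $G\cong X_{0}$ and using Definition~\ref{defcardy} gives $|e_{v}M^{n}|\leq|\mathcal{K}(X_{0},M)|=|\mathcal{K}(G,M)|\leq|\mathsf{M}|=\kappa$.

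It then remains to bound the sort $e_{u}X=E_{M}(n)$, and here the work is essentially done: by Definition~\ref{Polaroid} the relation $E_{M}(n)$ is a subspace of $e_{v}M^{n}\oplus e_{v}M^{n}$, whence $|E_{M}(n)|\leq|e_{v}M^{n}\oplus e_{v}M^{n}|\leq 2\kappa$ by cardinal arithmetic, the two coordinates each lying in a set of cardinality at most $\kappa$ (this collapses to $\kappa$ once $\kappa$ is infinite, which is the relevant regime). Adding the contribution $|e_{v}M^{n}|\leq\kappa$ from the sort at $v$ yields $|X|\leq 3\kappa$. The only genuine obstacle is the first bound: a priori $e_{v}M^{n}$ is an entire graded component of $M$, with no visible tie to $\kappa$, which records only homotopy classes of maps from compact objects. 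The crucial point, already supplied by Lemma~\ref{noice}, is that the single compact sort $X_{0}$ depends only on $v$, $\delta$ and $n$ and already exhausts all of $e_{v}M^{n}$ under evaluation, so that one compact object accounts for the whole of $e_{v}M^{n}$.
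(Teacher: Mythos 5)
Your argument is correct and shares the paper's overall skeleton --- everything reduces to the single bound $\vert e_{v}M^{n}\vert\leq\kappa$, after which $E_{M}(n)\subseteq e_{v}M^{n}\oplus e_{v}M^{n}$ finishes the job --- but your route to that bound is genuinely different. The paper identifies $e_{v}M^{r}$ with $\mathrm{Hom}_{\mathcal{K}(\Lambda\text{-}\mathbf{Proj})}(P(1_{v,\delta})[r],M)$ through a chain of isomorphisms passing via stalk complexes, and then invokes Definition \ref{defcardy}; you instead apply Lemma \ref{noice} to the trivial word $1_{v,\delta}$, for which the compact morphism of Definition \ref{compactmorph} is the identity, so that $Ma$ is the full hom-set and evaluation at $b_{n}$ gives a surjection from $\mathcal{K}(P(1_{v,\delta}(\swarrow\searrow))[n],M)$ onto $e_{v}M^{n}=e_{v}M^{n}\cap 1_{v,\delta}M$. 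Your detour through the completed word buys something real: in the homotopy category, maps out of a shifted stalk complex of a projective compute a subquotient of $e_{v}M^{r}$ (chain maps must land in a kernel and null-homotopies quotient by an image), so the paper's final identification is the delicate point of its proof, whereas $1_{v,\delta}(\swarrow\searrow)$ is built exactly so that every element of $e_{v}M^{n}$ extends to a chain map, and a surjection from a single compact hom-set is all the cardinality bound needs. Two points of hygiene, neither fatal: the step $\vert E_{M}(n)\vert\leq 2\kappa$ should really be $\vert E_{M}(n)\vert\leq\vert e_{v}M^{n}\vert^{2}$, which collapses to $\kappa$ only because $\kappa$ is infinite (it dominates $\vert\mathcal{S}\vert$, and there are infinitely many isoclasses of string complexes), a looseness the paper's own ``$3\kappa$'' shares; and you are implicitly using that Lemma \ref{noice} and Definition \ref{compactmorph} apply to the $\{0\}$-word and that $1_{v,\delta}M=e_{v}M$ in the sense of Definition \ref{defsubspaces} --- both hold, but deserve a sentence.
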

\begin{proof}Note that the underlying set of the $k\Gamma$-module in question is $E_{M}(n)\oplus e_{v}M^{n}$, where the action of the arrows in $\Gamma$ is given by the two projections of the left-hand summand into the right-hand one. For the remainder of the proof, if $\mathcal{A}$ is a (locally small) category with objects $X$ and $Y$, let $\mathrm{Hom}_{\mathcal{A}}(X,Y)$ denote the associated set of homomorphisms $X\to Y$ in $\mathcal{A}$. 

Consider the vector space isomorphisms
\[e_{v}M^{r}\simeq\mathrm{Hom}_{\Lambda\text{-}\mathbf{Mod}}(\Lambda e_{v}, M^{r})\simeq \mathrm{Hom}_{\mathcal{K}(\Lambda\text{-}\mathbf{Proj})}(X,Y)\simeq \mathrm{Hom}_{\mathcal{K}(\Lambda\text{-}\mathbf{Proj})}(P(1_{v,\delta})[r],M)
\]
where the complexes $X$ and $Y$ are given by respectively concentrating the projective $\Lambda$-modules $\Lambda e_{v}$ and $M^{r}$ in degree $0$. By Definition \ref{defcardy} this shows $\vert e_{v}M^{r}\vert\leq\kappa$, and since $E_{M}(n)$ is a subset of $e_{v}M^{n}\oplus e_{v}M^{n}$, the claim follows.
\end{proof}
\begin{lemma}\label{cmpctgen}Suppose $M$ is a $\Sigma$-pure-injective object of $\mathcal{K}_{\mathrm{min}}(\Lambda\text{-}\mathbf{Proj})$. Let $E=l_{1}^{-1}r_{1}\dots l_{p}^{-1}r_{p}$ be a homotopy word such that ${}^{\infty}E{}^{\,\infty}$ is a ($p$-periodic) homotopy $\mathbb{Z}$-word, as in Definition \ref{Polaroid}. Then for any $n\in\mathbb{Z}$ the object $(e_{v}M^n, E_{M}(n))$ of \emph{$k\text{-}\textbf{Rel}$} is $\Sigma$-pure-injective.
\end{lemma}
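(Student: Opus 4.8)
The plan is to realise $(e_{v}M^{n},E_{M}(n))$ as a coproduct of pure-injective relations of uniformly bounded cardinality, and then to read off $\Sigma$-pure-injectivity. Write $F$ for the assignment $M'\mapsto(e_{v}(M')^{n},E_{M'}(n))$, valued in $k\text{-}\mathbf{Rel}$. Since $E=l_{1}^{-1}r_{1}\dots l_{p}^{-1}r_{p}$ is a \emph{finite} homotopy word, Remark \ref{rem6.10}(2) shows that $F$ preserves small products and coproducts (on objects this is unambiguous by Corollary \ref{isosreflect}, homotopically minimal models being unique up to isomorphism). Since a relation is $\Sigma$-pure-injective as soon as each of its copowers is pure-injective (the relational form of \cite[Theorem 8.1]{JenLen1989} already used to set up these definitions), and $F$ preserves coproducts, it suffices to prove that $F$ carries pure-injective objects of $\mathcal{K}_{\mathrm{min}}(\Lambda\text{-}\mathbf{Proj})$ to pure-injective relations: applying this to the copowers $M^{(I)}$, which are pure-injective because $M$ is $\Sigma$-pure-injective, gives that each $N^{(I)}\cong F(M^{(I)})$ is pure-injective, whence $N:=(e_{v}M^{n},E_{M}(n))$ is $\Sigma$-pure-injective.

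To control $F$ on a pure-injective object $M'$ I would first decompose it. By the equivalence of (i) and (iv) in Theorem \ref{characterisation}, such an $M'$ is a coproduct $\bigoplus_{j}V_{j}$ of indecomposable pure-injective objects with local endomorphism rings; replacing each $V_{j}$ by its homotopically minimal model, Corollary \ref{sigmacardy} furnishes a single cardinal $\kappa$, independent of $j$ (and, when $M'=M^{(I)}$, of $I$), with $\vert\mathsf{V}_{j}\vert\leq\kappa$. Applying $F$ and using coproduct-preservation gives $F(M')\cong\bigoplus_{j}F(V_{j})$, and Lemma \ref{cardylemma} bounds each summand: the $k\Gamma$-module underlying $F(V_{j})$ has cardinality at most $3\kappa$. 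Thus $F(M')$, and in particular every copower $N^{(I)}$, is a coproduct of relations whose underlying modules have cardinality at most the fixed cardinal $3\kappa$.

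From here I would conclude by the relational form of the Jensen--Lenzing characterisation \cite[Theorem 8.1]{JenLen1989}: the uniform bound $3\kappa$ on the indecomposable summands of $F(M')$ is precisely the input needed to guarantee that the pieces $F(V_{j})$ are pure-injective and assemble into a pure-injective coproduct, so that $F(M')$ is pure-injective. The bound being \emph{independent of $I$} is essential, since it prevents the indecomposable summands of $N^{(I)}$ from growing with $I$, which is exactly the phenomenon that would otherwise destroy pure-injectivity of the copowers.

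The step I expect to be the main obstacle is the claim that $F$ preserves pure-injectivity. The difficulty is that $F$ is a functor on the category $\mathcal{C}_{\mathrm{min}}(\Lambda\text{-}\mathbf{Proj})$ of genuine complexes, whereas the sections and factorisations witnessing pure-injectivity in $\mathcal{K}(\Lambda\text{-}\mathbf{Proj})$ hold only up to homotopy, and a null-homotopy $d_{M'}h+hd_{M'}$ need not act by zero on $e_{v}(M')^{n}$ even when $M'$ is homotopically minimal. Consequently one cannot merely transport a single homotopy-class splitting of the summation map through $F$. The resolution I would pursue is to avoid transporting a fixed section and instead to use the uniform cardinality bound $3\kappa$ directly: working inside $F(M')\cong\bigoplus_{j}F(V_{j})$, one constructs the factorisation of the summation map through the canonical map from the bounded pieces, thereby establishing pure-injectivity of $N$ and of all its copowers simultaneously.
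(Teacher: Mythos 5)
There is a genuine gap in the final step. You reduce to showing that each copower $N^{(I)}=F(M^{(I)})$ is pure-injective, decompose $M^{(I)}$ into indecomposable pure-injectives $V_{j}$ of structure-cardinality at most $\kappa$, and then claim that the uniform bound $3\kappa$ on the summands $F(V_{j})$ ``is precisely the input needed'' to make the coproduct $\bigoplus_{j}F(V_{j})$ pure-injective. It is not: a cardinality bound on the summands of a \emph{coproduct} implies nothing about pure-injectivity (every module is trivially a coproduct of one summand of its own cardinality), and a coproduct of pure-injective modules need not be pure-injective --- that failure is exactly what distinguishes pure-injective from $\Sigma$-pure-injective. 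The criterion you are reaching for, condition (v) of \cite[Theorem 8.1]{JenLen1989}, is a statement about \emph{products}: $N$ is $\Sigma$-pure-injective if and only if there is a single cardinal $\kappa'$ such that every product $N^{I}$ decomposes as a direct sum of modules of cardinality at most $\kappa'$. Your sketched ``resolution'' of constructing the factorisation of the summation map by hand from the bound $3\kappa$ would in effect require already knowing the conclusion. There is also a secondary error: you invoke the equivalence of (i) and (iv) in Theorem \ref{characterisation} to decompose an arbitrary \emph{pure-injective} $M'$ into indecomposables, but (iv) characterises $\Sigma$-pure-injectivity; general pure-injectives admit no such decomposition. (For $M'=M^{(I)}$ this is repairable, since $M^{(I)}$ is itself $\Sigma$-pure-injective, but the intermediate claim that $F$ preserves pure-injectivity is both unproven and stronger than what you need.)

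The paper's proof uses the same ingredients you assembled --- Remark \ref{rem6.10}(ii), Theorem \ref{characterisation}, Corollary \ref{sigmacardy}, Corollary \ref{isosreflect} and Lemma \ref{cardylemma} --- but routes them through products rather than coproducts. Since $E$ is finite, $F$ preserves small products as well as coproducts, so $N^{I}=F(M^{I})$ for every set $I$. By (i)$\Leftrightarrow$(iv) of Theorem \ref{characterisation} the product $K=M^{I}$ (not the copower) decomposes as $\bigoplus_{s}U_{s}$ with each $U_{s}$ an indecomposable pure-injective; Corollary \ref{isosreflect} lifts this isomorphism to $\mathcal{C}_{\mathrm{min}}(\Lambda\text{-}\mathbf{Proj})$, whence $N^{I}\simeq\bigoplus_{s}F(U_{s})$ with each summand of cardinality at most $3\kappa$ by Corollary \ref{sigmacardy} and Lemma \ref{cardylemma}. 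Now (v)$\Rightarrow$(i) of \cite[Theorem 8.1]{JenLen1989} applies verbatim and yields $\Sigma$-pure-injectivity of $N$ in one step, with no need to verify pure-injectivity of any individual piece. Your concern about null-homotopies is also moot on this route: one only transports an isomorphism (the decomposition of $K$) through $F$, not a section, and Corollary \ref{isosreflect} handles exactly that.
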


\begin{proof} 
Let $\Gamma$ be the Kronecker quiver. For simplicity we identify any object in $k\text{-}\mathbf{Rel}$ with the corresponding $k\Gamma$-module in $k\Gamma\text{-}\mathbf{Mod}_{\mathrm{rel}}$. By the equivalence of (i) and (v) in \cite[Theorem 8.1]{JenLen1989} it suffices to show that there is a cardinal $\kappa'$ such that the product $(e_{v}M^{n},E_{M}(n))
^{I}$ is a coproduct of $k\Gamma$-modules of cardinality at most $\kappa'$. By Theorem \ref{neemantheorem} and Corollary \ref{sigmacardy} there exists a cardinal $\kappa$ such that the underlying structure of any indecomposable pure-injective object of $\mathcal{K}_{\mathrm{min}}(\Lambda\text{-}\mathbf{Proj})$ has cardinality at most $\kappa$. Let $\kappa'=3\kappa$. Now let $I$ be a set and let $K=M^{I}$. By the equivalence of (i) and (iv) in Theorem \ref{characterisation} there is a set $\mathtt{S}$ such that $K\simeq \bigoplus_{s\in\mathtt{S}} U_{s}$ where each $U_{s}$ is an indecomposable $\Sigma$-pure-injective object of $\mathcal{K}_{\mathrm{min}}(\Lambda\text{-}\mathbf{Proj})$. By Corollary \ref{isosreflect} this means $K\simeq \bigoplus_{s\in\mathtt{S}} U_{s}$ in the category $\mathcal{C}_{\mathrm{min}}(\Lambda\text{-}\mathbf{Proj})$ of complexes. By Remark \ref{rem6.10}(ii) we have that
\[(e_{v}\bigoplus U^{n}_{s},E_{\bigoplus U_{s}}(n))=\bigoplus(e_{v}U^{n}_{s},E_{U_{s}}(n))\text{, and }(e_{v}K^{n},E_{K}(n))=(e_{v}M^{n},E_{M}(n))
^{I}\]
where the coproducts run over $s\in\mathtt{S}$. Since the cardinality of the underlying structure of each $U_{s}$ is at most $\kappa$, the cardinality of each $(e_{v}U^{n}_{s},E_{U_{s}}(n))$ is at most $3\kappa=\kappa'$ by Lemma \ref{cardylemma}. 
\end{proof}

\section{Functorial filtrations.}\label{6}
\begin{definition}\label{defintion.7.12}\cite[Definition 2.2.16]{Ben2016} Let $\Sigma$ be the set of triples $(B,D,n)$ where $n\in\mathbb{Z}$ and such that $(B,D)\in\mathcal{W}_{v,\pm 1}\times\mathcal{W}_{v,\mp 1}$. 
\end{definition}
\begin{assumption}\label{ass9}
In \S\ref{6} fix an
object $M$ of $\mathcal{K}_{\mathrm{min}}(\Lambda\text{-}\boldsymbol{\mathrm{Proj}})$, fix $(B,D,n),(B',D',n')\in\Sigma$ and let $C=B^{-1}D$ and $C'=B'^{-1}D'$.
\end{assumption}
\begin{definition}\label{definition.7.14}\cite[Definition 2.2.14]{Ben2018} We write $C\sim C'$ 
if and only if $C'=C^{\pm1}[t]$ for some $t\in\mathbb{Z}$. So either ($C'=C^{\pm1}$ and $I_{C}\neq\mathbb{Z}\neq I_{C'}$) or ($C'=C^{\pm1}[t]$ and $I_{C}=I_{C'}=\mathbb{Z}$) \cite[Lemma 2.2.17]{Ben2018} (see also \cite[Lemma 2.1]{Cra2018}). Define the \textit{axis} $a_{B,D}\in\mathbb{Z}$ of $(B,D)$ by $C_{\leq a_{B,D}}=B^{-1}$ and $C_{>a_{B,D}}=D$. If $I_{C}=\{0,\dots,m\}$ then $a_{D,B}=m-a_{B,D}$; if $I_{C}=\pm\mathbb{N}$ then $a_{D,B}=-a_{B,D}$; and if $I_{C}=\mathbb{Z}$ then $a_{B,D}=0$ \cite[Lemma 2.2.15]{Ben2018}. 
We write $(B,D,n)\sim(B',D',n')$ if and only if
\[
C\sim C''\text{ and }n'-n=\begin{cases}
\mu_{C}(a_{B',D'})-\mu_{C}(a_{B,D}) & \mbox{(if }C'=C\mbox{ is not a homotopy }\mathbb{Z}\mbox{-word)}\\
\mu_{C}(a_{D',B'})-\mu_{C}(a_{B,D}) & \mbox{(if }C'=C^{-1}\mbox{ is not a homotopy }\mathbb{Z}\mbox{-word)}\\
\mu_{C}(\pm t) & \mbox{(if }C'=C^{\pm1}[t]\mbox{ is a homotopy }\mathbb{Z}\mbox{-word)}
\end{cases}
\]
By \cite[Lemma 2.2.19]{Ben2018} $\sim$ is an equivalence relation. Let $\Sigma(s)$ be the set of $(B,D,n)\in\Sigma$ where $B^{-1}D$ is aperiodic, and $\Sigma(b)$ the set of such $(B,D,n)$ where $B^{-1}D$ is periodic. Note that the relation $\sim$ on $\Sigma$ restricts to an equivalence relation $\sim_{s}$ (respectively $\sim_{b}$) on $\Sigma(s)$ (respectively $\Sigma(b)$). Let $\mathcal{I}(s)\subseteq\Sigma(s)$ (respectively $\mathcal{I}(b)\subseteq\Sigma(b)$) denote a chosen
collection of representatives $(B,D,n)$, one for each
equivalence class of $\Sigma(s)$ (respectively $\Sigma(b)$). Let $\mathcal{I}=\mathcal{I}(s)\sqcup\mathcal{I}(b)$.
\end{definition}
The relation $\sim$ from Definition \ref{definition.7.14} helps us identify and distinguish between isomorphism classes of the functors from Definition \ref{refined}; see Lemma \ref{usefulprops}.
\begin{definition}\label{refined}\cite[Definition 2.2.2]{Ben2018} For any $n\in\mathbb{Z}$ consider the $k$-subspaces of $e_{v}M^{n}$
\[
\begin{array}{c}
F_{B,D,n}^{+}(M)=M^{n}\cap\left(B^{+}(M)\cap D^{+}(M)\right),\\
F_{B,D,n}^{-}(M)=M^{n}\cap\left(B^{+}(M)\cap D^{-}(M)+B^{-}(M)\cap D^{+}(M)\right),\\
G_{B,D,n}^{\pm}(M)=M^{n}\cap\left(B^{-}(M)+D^{\pm}(M)\cap B^{+}(M)\right).
\end{array}
\]
Define the quotients $F_{B,D,n}(M)$ and $G_{B,D,n}(M)$ by
\[
\begin{array}{cc}
F_{B,D,n}(M)=F_{B,D,n}^{+}(M)/F_{B,D,n}^{-}(M), &  G_{B,D,n}(M)=G_{B,D,n}^{+}(M)/G_{B,D,n}^{-}(M).
\end{array}
\]
\end{definition}
\begin{remark}\label{remreffun}If $C$ is aperiodic then $F_{B,D,n}$ and $G_{B,D,n}$ define naturally isomorphic additive functors $\mathcal{K}_{\mathrm{min}}(\Lambda\text{-}\boldsymbol{\mathrm{Proj}})\rightarrow k\text{-}\boldsymbol{\mathrm{Mod}}$ \cite[Corollary 2.2.8]{Ben2018}. If $C$ is periodic then $F_{B,D,n}$ and $G_{B,D,n}$ define
naturally isomorphic functors $\mathcal{K}_{\mathrm{min}}(\Lambda\text{-}\boldsymbol{\mathrm{Proj}})\rightarrow k[T,T^{-1}]\text{-}\boldsymbol{\mathrm{Mod}}$ \cite[Corollary 2.2.12]{Ben2018}. Furthermore, in case $C$ is periodic and we have $C={}^{\infty}E{}^{\,\infty}$ in the notation of Definition \ref{Polaroid}, then by \cite[Lemma 2.2.11]{Ben2018} we have $F^{+}_{B,D,n}(M)= E_{M}(n)^{\sharp}$ and $F^{-}_{B,D,n}(M)= E_{M}(n)^{\flat}$ in the notation of Definition \ref{defsplit}.
\end{remark}
We now gather together some properties of the functors from Definition \ref{refined}. Recall Definition \ref{Polaroid}: if $E$ is a homotopy $\{0,\dots,p\}$ word such that $p>1$ and $C={}^{\infty}E^{\infty}$ is $p$-periodic then the relation $E_{M}(n)$ on $e_{v}M$ is defined to be the set of pairs $(m,m')$ with  $m'\in Em$.
\begin{lemma}\label{usefulprops}Let $\mathrm{res}$ denote the involution of $k[T,T^{-1}]\text{-}\boldsymbol{\mathrm{Mod}}$ which swaps the action of $T$ and $T^{-1}$. Fix $(B,D,n)$, $(B',D',n')$, $C$ and $C'$ as in Assumption \ref{ass9}.
\begin{enumerate}
\item \emph{\cite[Corollary 2.2.24]{Ben2018}} If $(B,D,b)\sim(B',D',n')$ then the following statements hold.
\begin{enumerate}
\item  If $C$ is aperiodic then $F_{B,D,n}\simeq F_{B',D',n'}$.
\item  If $C$ is periodic and $C'=C[t]$ for some $t\in\mathbb{Z}$ then $F_{B,D,n}\simeq F_{B',D',n'}$. 
\item  If $C$ is periodic and $C'=C^{-1}[t]$ for some $t\in\mathbb{Z}$ then $F_{B,D,n}\simeq\mathrm{res} \,F_{B',D',n'}$.
\end{enumerate}
\item Let $C$ be aperiodic and $P=P(C)[\mu_{C}(a_{B,D})-n]$. 
\begin{enumerate}
\item\emph{\cite[Lemma 2.3.20]{Ben2018}} If $C'=C$ and $n-n'=\mu_{C}(a_{B,D})-\mu_{C}(a_{B',D'})$ then $F_{B',D',n'}(P)\simeq k$ as vector spaces. If $(B,D,n)\nsim(B',D',n')$ then $F_{B',D',n'}(P)=0$.
\item\emph{\cite[Lemma 2.5.2]{Ben2018}} If $\mathcal{B}$
is a $k$-basis of $F_{B,D,n}(M)$ then there is a morphism $\theta_{B,D,n,M}:P^{(\mathcal{B})}\rightarrow M$
where $F_{B,D,n}(\theta_{B,D,n,M})$ is an isomorphism.
\end{enumerate}
\item Let $C={}^{\infty}E{}^{\,\infty}$ be $p$-periodic, $V$ be a $k[T,T^{-1}]$-module and $P=P(C,V)[-n]$. 
\begin{enumerate}
\item\emph{\cite[Lemma 2.3.21]{Ben2018}} If $C'=C[m]$ and $n-n'=\mu_{C}(m)$ then $F_{B',D',n'}(P)\simeq V$ as $k[T,T^{-1}]$-modules. If $(B,D,n)\nsim(B',D',n')$ then $F_{B',D',n'}(P)=0$.
\item\emph{\cite[Lemma 2.5.4]{Ben2018}} If the object $(e_{v}M^{n},E_{M}(n))$ of \emph{$k\text{-}\mathbf{Rel}$} is split then there is a morphism $\theta_{B,D,n,M}:P\rightarrow M$ where $F_{B,D,n}(\theta_{B,D,n,M})$ is an isomorphism.
\end{enumerate}
\end{enumerate}
\end{lemma}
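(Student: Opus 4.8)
The plan is to recall that every part of the statement is established in the author's thesis \cite{Ben2018}, and to indicate the strategy behind each; the unifying observation is that the refined functors $F_{B,D,n}$ and $G_{B,D,n}$ are assembled from the one-sided functors $B^{\pm}$ and $D^{\pm}$ of Definition \ref{definonesidedfunctors}, whose values are the nested subspaces $CM$ governed by the combinatorial inclusions recorded just before that definition. Consequently the behaviour of $F_{B,D,n}$ under the relation $\sim$ of Definition \ref{definition.7.14} is dictated by how relocating the axis $a_{B,D}$ and inverting a homotopy word permute those subspaces.

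For the invariance statement (i), the point is that $C'=C^{\pm1}[t]$ means $(B',D')$ and $(B,D)$ factor one and the same (bi-infinite) homotopy word, at possibly different axes. First I would check that relocating the axis merely reindexes the witnessing sequences appearing in the definitions of $B^{\pm}(M)$ and $D^{\pm}(M)$, which yields the natural isomorphisms in cases (a) and (b). For case (c) I would observe that inverting a periodic word interchanges the forward witnessing sequences $(m_{i})_{i\geq0}$ with the backward ones; by Remark \ref{remreffun} this is exactly the interchange of the $T$- and $T^{-1}$-actions on $E_{M}(n)^{\sharp}/E_{M}(n)^{\flat}$, so the isomorphism acquires the involution $\mathrm{res}$.

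For the evaluation statements (ii)(a) and (iii)(a) I would compute $F_{B',D',n'}$ directly on the explicitly presented complex $P(C)$ or $P(C,V)$. Using Remark \ref{remreffun}, which in the periodic case identifies $F^{+}_{B,D,n}$ with $E_{M}(n)^{\sharp}$ and $F^{-}_{B,D,n}$ with $E_{M}(n)^{\flat}$, the quotient $F^{+}/F^{-}$ collapses to $k$ (respectively to $V$) precisely when the defining word of $P$ is $\sim$-equivalent to $(B')^{-1}D'$, while otherwise the relevant one-sided subspaces of $P$ are forced to coincide and the quotient vanishes. The realisation statements (ii)(b) and (iii)(b) are then the crucial step: a class in $F^{+}_{B,D,n}(M)$ comes equipped with the very sequences of elements of $M$ that, exactly as in the construction of Lemma \ref{noice}, assemble into a chain map out of $P(C)$ (respectively $P(C,V)$); ranging over a basis $\mathcal{B}$ produces $\theta_{B,D,n,M}$, and one verifies that $F_{B,D,n}(\theta_{B,D,n,M})$ is an isomorphism by comparison with part (a).

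I expect the band realisation (iii)(b) to be the main obstacle. There the naive assignment only records the action of the relation on its $\sharp/\flat$ quotient, whereas to map the band complex $P(C,V)$ into $M$ one needs an actual subspace carrying the $k[T,T^{-1}]$-action. This is precisely what the splitting hypothesis on $(e_{v}M^{n},E_{M}(n))$ supplies: an automorphic complement $W$ with $E_{M}(n)^{\sharp}=E_{M}(n)^{\flat}\oplus W$ as in Definition \ref{defsplit}, on which the $T$-action is realised by a genuine automorphism. This allows $V$ to be lifted and the band complex itself, rather than merely a string approximation, to be realised as a summand mapping to $M$.
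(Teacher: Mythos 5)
The paper gives no proof of this lemma: each item is imported verbatim from the author's thesis (\cite[Corollary 2.2.24, Lemmas 2.3.20, 2.5.2, 2.3.21 and 2.5.4]{Ben2018}), the only added content being the remark that the hypothesis of \cite[Lemma 2.5.4]{Ben2018} is equivalent to the splitting hypothesis in (iii)(b). Your sketch is a faithful and correct reconstruction of the intended arguments and is consistent with the machinery the paper sets up: the invariance in (i) by reindexing witnessing sequences, with (i)(c) following because inverting the word replaces $E_{M}(n)$ by the inverse relation and so swaps the $T$- and $T^{-1}$-actions on $E_{M}(n)^{\sharp}/E_{M}(n)^{\flat}$; the evaluations (ii)(a) and (iii)(a) by direct computation on the explicitly presented complexes; and the realisations (ii)(b) and (iii)(b) by assembling witnessing sequences into chain maps as in Lemma \ref{noice}, with the splitting hypothesis supplying the automorphic complement needed to lift $V$ off the quotient in the band case. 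The one point worth making explicit in (ii)(b) is that upgrading surjectivity of $F_{B,D,n}(\theta_{B,D,n,M})$ to an isomorphism uses that $F_{B,D,n}$ preserves the coproduct over $\mathcal{B}$ (Remark \ref{rem6.10}) together with the computation $F_{B,D,n}(P)\simeq k$ from (ii)(a), so that the source of $F_{B,D,n}(\theta_{B,D,n,M})$ is $k^{(\mathcal{B})}$ mapping bijectively onto the span of $\mathcal{B}$; with that caveat your outline matches what the cited thesis results do.
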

Note that the statements of \cite[Lemma 2.5.4]{Ben2018} and Lemma \ref{usefulprops}(iiib) are equivalent: any relation $(V,C)$ that \textit{admits a reduction which meets in} $0$ must have been split by \cite[Corollary 1.4.33]{Ben2018}.
\section{Covering properties.}\label{seccover}
In \S\ref{seccover} we gather some consequences of what has been developed so far. These consequences will be seen to be vital to the proof of Theorem \ref{characterisation}.
\begin{assumption}Throughout \S\ref{seccover} fix an
object $M$ of $\mathcal{K}_{\mathrm{min}}(\Lambda\text{-}\boldsymbol{\mathrm{Proj}})$.
\end{assumption}
\begin{lemma}
\label{lemma.5.3}\emph{\cite[Lemma 2.4.8]{Ben2018} (}see also \emph{\cite[Lemma 10.3]{Cra2018})}. Fix an integer
$r$ and some $\delta\in\{\pm1\}$. For any non-empty subset $S$
of $e_{v}M^{r}$ which does not meet \emph{$\rad(M)$} there
is a homotopy word $C\in\mathcal{W}_{v,\delta}$ such that either:
\begin{enumerate}
\item $C$ is finite and $S$ meets $C^{+}(M)$ but not $C^{-}(M)$; or
\item $C$ is a homotopy $\mathbb{N}$-word and $S$ meets $C_{\leq n}M$
but not $C_{\leq n}\rad(M)$ for each $n\geq0$.
\end{enumerate}
\end{lemma}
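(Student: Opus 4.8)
The plan is to construct $C$ by an inductive, syllable-by-syllable procedure, maintaining the invariant that $S$ meets $C_{\leq n}M$ but not $C_{\leq n}\rad(M)$. The base case is the trivial word $1_{v,\delta}$: since $C_{\leq 0}M = e_v M \supseteq S \neq \emptyset$ while $C_{\leq 0}\rad(M) = e_v\rad(M) \subseteq \rad(M)$, the hypothesis $S \cap \rad(M) = \emptyset$ gives the invariant at stage $0$. The procedure either never halts, producing a homotopy $\mathbb{N}$-word whose stagewise invariant is precisely assertion (ii), or it halts at a finite word, which I would show satisfies (i). Throughout, the head of the word stays equal to $v$ and letters are only appended at the growing (large-index) end, so the output indeed lies in $\mathcal{W}_{v,\delta}$.

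For the inductive step, suppose $C_{\leq n}$ has been built, and consider a witness $s \in S \cap C_{\leq n}M$ together with a realising chain $m_0 = s, m_1, \dots, m_n$ along the syllables, with frontier $m_n \in e_w M$ where $w = v_C(n)$. Since $s \notin C_{\leq n}\rad(M)$, the frontier cannot be forced into $\rad(M)$. The gentle hypotheses leave at most two admissible one-syllable continuations of $C_{\leq n}$, of the forms $\gamma^{-1}d_{\mathrm{l}(\gamma)}$ and $d_{\mathrm{l}(\gamma)}^{-1}\gamma$; using the local description of $d_{w,M}$ in Definition \ref{dmaps} and the subspace inclusions recorded in the Remark preceding Definition \ref{definonesidedfunctors}, I would establish a covering statement to the effect that, modulo $C_{\leq n}\rad(M)$, the subspace $C_{\leq n}M$ is exhausted by the extended subspaces $C_{\leq n+1}M$ attached to these continuations. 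The witness then survives into one such extension; I take that extension to be $C_{\leq n+1}$, re-establishing the invariant, and continue. If instead no continuation preserves the invariant, the procedure halts.

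It then remains to verify that a finite halting word $C$ satisfies (i). Here I would compare the halting condition with the two branches of each of $C^+$ and $C^-$ in Definition \ref{definonesidedfunctors}, according to whether the relevant continuing arrow exists: when no arrow $a$ makes $Cd_a^{-1}a$ a homotopy word one has $C^+(M) = CM$, so $S$ meets $C^+(M)$ directly, while if such $a$ exists the inability to extend forces the surviving witness into the stabilised intersection $\bigcap_\beta Cd_a^{-1}\beta\rad(M) = C^+(M)$. Dually, the obstruction to an inverse-type continuation $b^{-1}d_b$ shows that $S$ avoids $C^-(M)$, whether $C^-(M)$ is the union $\bigcup_\alpha C\alpha^{-1}d_b M$ or the radical expression $C(\sum d_{c(+)}M + \sum c(-)M)$. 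This yields the two displayed subspaces in (i).

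The main obstacle is the covering statement in the inductive step: making precise, and proving, that an element of $C_{\leq n}M$ lying outside $C_{\leq n}\rad(M)$ must lie in the extension attached to one of the at most two admissible continuations. This is exactly where the gentle combinatorics and the inclusions among $CM$, $C\rad(M)$, $C\gamma^{-1}d_{\mathrm{l}(\gamma)}M$ and $Cd_{\mathrm{l}(\gamma)}^{-1}\gamma M$ must be used in full force, with the two syllable types and the degenerate cases (one or neither continuation available) treated separately. By comparison the base case, the identification of the nonterminating branch with (ii), and the translation of the halting condition into the definitions of $C^\pm$ are routine once the covering is in hand.
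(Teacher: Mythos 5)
The paper does not actually prove Lemma \ref{lemma.5.3}: it is imported wholesale from \cite[Lemma 2.4.8]{Ben2018} (see also \cite[Lemma 10.3]{Cra2018}), so there is no in-paper argument to measure you against. Your overall strategy --- grow the word one syllable at a time from $1_{v,\delta}$, maintain the invariant that $S$ meets $C_{\leq n}M$ but not $C_{\leq n}\rad(M)$, read off case (ii) from non-termination and case (i) from termination --- is exactly the functorial-filtrations argument of those sources, and the skeleton is sound.

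The genuine gap is the one you flag yourself: the covering statement in the inductive step is asserted, not proved, and it is the entire mathematical content of the lemma. To close it you need the specific identities, not just the inclusions quoted before Definition \ref{definonesidedfunctors}. Concretely: (a) $CM=Cd_{a}^{-1}aM$, because $d_{a,M}(m)\in aM$ for every $m$ by construction of $d_{a,M}$, so the shortest direct-type extension always captures all of $CM$; (b) the interlacing identity $Cd_{a}^{-1}\beta\rad(M)=Cd_{a}^{-1}(\beta z)M$ where $z$ is the unique arrow with $\beta z\in\mathbf{P}$ (and $=Cd_{a}^{-1}0$ when no such $z$ exists), which uses gentleness condition (iii) to see that $\beta\rad(M)=\beta zM$; and (c) $CD\rad(M)\subseteq C\rad(M)$ for an inverse-type extension $D=\alpha^{-1}d_{\mathrm{l}(\alpha)}$, which is what propagates the ``does not meet'' half of the invariant --- a point your phrase ``the witness then survives into one such extension, re-establishing the invariant'' quietly skips. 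With (a) and (b) the direct-type extensions of $C$ form a single \emph{finite} descending chain from $CM$ down to $C^{+}(M)$ --- finite because $\mathcal{J}$ is admissible, so paths in $\mathbf{P}$ have bounded length --- and only this finiteness lets you pass from ``$S$ meets every term of the chain'' to ``$S$ meets the intersection $C^{+}(M)$'' in your halting analysis; for an arbitrary non-empty set $S$ with no linear structure that implication fails for infinite chains, so the admissibility of $\mathcal{J}$ is load-bearing and must be invoked. Your treatment of the degenerate branches of $C^{\pm}$ is fine once one notes $\sum d_{c(+)}M+\sum c(-)M\subseteq\rad(M)$ and the monotonicity of $C(-)$. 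In short: right architecture, but the proof is incomplete until the covering identities (a)--(c) and the finiteness of the direct-type chain are actually established.
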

\begin{lemma}
\label{lemma.5.2-1}\emph{(}See \emph{\cite[Lemma 2.4.1]{Ben2018}} and \emph{\cite[Lemma 10.5]{Cra2018})}. Let $M$ be $\Sigma$-pure-injective, $r\in\mathbb{Z}$, $\delta=\pm1$ and $U$ be an
$k$-subspace of $e_{v}M^{r}$ with $e_{v}\mathrm{rad}(M^{r})\subseteq U$. If $m\in e_{v}M^{r}\setminus U$ then
there exists $B\in\mathcal{W}_{v,\delta}$ and $D\in\mathcal{W}_{v,-\delta}$
such that $U+m$ meets $G_{B,D,r}^{+}(M)$ but not $G_{B,D,r}^{-}(M)$. 
\end{lemma}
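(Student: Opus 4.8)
The plan is to deduce this two-sided covering statement from the one-sided Lemma \ref{lemma.5.3}, applied once in each direction, with $\Sigma$-pure-injectivity entering only through Corollary \ref{coolbeans} to tame infinite words.

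First I would set $S = m + U$. Since $e_v\mathrm{rad}(M^r) \subseteq U$ and $m \notin U$, the coset $S$ is a non-empty subset of $e_v M^r$ disjoint from $\mathrm{rad}(M)$: any $m + u \in \mathrm{rad}(M)$ would lie in $e_v\mathrm{rad}(M^r) \subseteq U$, forcing $m \in U$. Applying Lemma \ref{lemma.5.3} to $S$ with the sign $\delta$ produces a word $B \in \mathcal{W}_{v,\delta}$. In case (i) of that lemma, $S$ meets $B^+(M)$ but not $B^-(M)$ directly. In case (ii), where $B$ is an $\mathbb{N}$-word, I would invoke Corollary \ref{coolbeans}: $\Sigma$-pure-injectivity gives $l$ with $e_v M^r \cap B^+(M) = e_v M^r \cap B_{\leq l}M$, so that $S$ meeting $B_{\leq l}M$ forces $S$ to meet $B^+(M)$; and since any eventually-zero sequence witnessing membership in $B^-(M)$ places its source in some $B_{\leq n}\mathrm{rad}(M)$, the hypothesis that $S$ avoids each $B_{\leq n}\mathrm{rad}(M)$ shows $S$ avoids $B^-(M)$. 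Either way I obtain $B \in \mathcal{W}_{v,\delta}$ with $S$ meeting $B^+(M)$ but not $B^-(M)$; fix $m' \in S \cap B^+(M)$, and note $m' \notin U + B^-(M)$, as otherwise $S$ would meet $B^-(M)$.

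Next I would run the same machine in the other direction, relative to $B$. Set $U_1 = (U \cap B^+(M)) + (B^-(M) \cap e_v M^r)$ and $S_1 = m' + U_1$. Because $B^-(M) \subseteq B^+(M)$ one checks $(m+U) \cap B^+(M) = m' + (U \cap B^+(M)) \subseteq S_1$, and the observation $m' \notin U + B^-(M)$ shows $S_1$ is again disjoint from $\mathrm{rad}(M)$. Applying Lemma \ref{lemma.5.3} to $S_1$ with the opposite sign $-\delta$ (and reducing the infinite case exactly as above) yields $D \in \mathcal{W}_{v,-\delta}$ with $S_1$ meeting $D^+(M)$ but not $D^-(M)$. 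Since $B$ and $D$ have head $v$ and signs $\delta$ and $-\delta$, the composite $C = B^{-1}D$ is automatically a homotopy word, so $(B,D,r)$ is a legitimate triple.

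Finally I would verify the two conclusions, the point being that $G^+_{B,D,r}(M)$ and $G^-_{B,D,r}(M)$ both contain $B^-(M) \cap e_v M^r$, so the $B^-$-component can be freely added or removed. For the positive part, take $m_2 \in S_1 \cap D^+(M)$; then $m_2 \in B^+(M) \cap D^+(M) \subseteq G^+_{B,D,r}(M)$, and subtracting its $B^-(M)$-component produces an element of $(m + U) \cap G^+_{B,D,r}(M)$. For the negative part, suppose some $m + u \in G^-_{B,D,r}(M)$, written $m + u = b + d$ with $b \in B^-(M)$ and $d \in D^-(M) \cap B^+(M)$, taking homogeneous degree-$r$ parts via Corollary \ref{corgradedspace}; then $m + u \in B^+(M)$ places it in $S_1$, and since $b \in U_1$ the element $d = (m+u) - b$ again lies in $S_1$, so $S_1$ meets $D^-(M)$, contradicting the choice of $D$. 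I expect the main obstacle to be pinning down the subspace $U_1$: it must be large enough (containing $B^-(M) \cap e_v M^r$) for the $B^-$-absorption arguments in both directions to go through, yet small enough that disjointness of $S_1$ from $\mathrm{rad}(M)$ and the inclusion $(m+U)\cap B^+(M) \subseteq S_1$ survive; the infinite-word reductions via Corollary \ref{coolbeans} are precisely where $\Sigma$-pure-injectivity is genuinely used.
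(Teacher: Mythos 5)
Your proposal is correct and follows essentially the same route as the paper: reduce to the one-sided statement via Lemma \ref{lemma.5.3}, use Corollary \ref{coolbeans} (where $\Sigma$-pure-injectivity enters) to replace an infinite word by a finite truncation, and then run the Crawley-Boevey double-application argument to produce the pair $(B,D)$. The paper simply defers the second application and the $G^{\pm}$ bookkeeping to the cited proof of \cite[Lemma 10.5]{Cra2018}, whereas you spell those steps out explicitly.
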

\begin{proof}
By adapting the argument in the proof of \cite[Lemma 10.5]{Cra2018} (with few complications), it suffices to show that there is a homotopy word $C\in\mathcal{W}_{v,\delta}$
such that $U+m$ meets $C^{+}(M)$ but not $C^{-}(M)$. Let $S=U+m$. Note $S\cap\rad(M)=\emptyset$ since $e_{v}\rad(M^{r})\subseteq U$ and $m\notin U$. So by Lemma \ref{lemma.5.3} there is a homotopy word $C$
such that either $C$ is finite and $S\cap C^{+}(M)\neq\emptyset=S\cap C^{-}(M)$,
or $C$ is a homotopy $\mathbb{N}$-word and for all $n\geq0$ we
have $S\cap C_{\leq n}M\neq\emptyset=S\cap C_{\leq n}\rad(M)$.
We may assume $I_{C}=\mathbb{N}$, and so, by Corollary \ref{coolbeans}, for some $l>0$ we have
\[S\cap C^{+}(M)=S\cap e_{v}M^{r}\cap C^{+}(M)=S\cap e_{v}M^{r}\cap C_{\leq l}M\neq\emptyset
\]
as required.
\end{proof}
Recall, from Definition \ref{definition.7.14}, that: $\Sigma(s)$ (respectively $\Sigma(b)$) is the set of triples $(B,D,n)\in\Sigma$ where $B^{-1}D$ is aperiodic  (respectively periodic); $\mathcal{I}(s)$ (respectively $\mathcal{I}(b)$) is a collection of representatives $(B,D,n)$; and $\mathcal{I}=\mathcal{I}(s)\sqcup\mathcal{I}(b)$.
\begin{lemma}
\label{lemma.7.5}\emph{(}See \emph{\cite[Lemma 2.5.5]{Ben2018}}, \emph{\cite[Lemma 10.5]{Cra2018}} and \emph{\cite[p. 163]{ButRin1987})}. Let $\theta:P\rightarrow M$ be a morphism in \emph{$\mathcal{K}_{\mathrm{min}}(\Lambda\text{-}\boldsymbol{\mathrm{Proj}})$} where $M$ is $\Sigma$-pure-injective. If $F_{B,D,n}(\theta)$ is surjective for each $(B,D,n)\in\Sigma$ then $\theta^{i}$
is surjective for each $i$.
\end{lemma}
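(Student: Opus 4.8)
The plan is to deduce surjectivity of each component $\theta^{i}$ from surjectivity modulo the radical, and then to detect any hypothetical element outside the image by a triple in $\Sigma$ using the covering Lemma~\ref{lemma.5.2-1} and the natural isomorphism $F_{B,D,n}\simeq G_{B,D,n}$ of Remark~\ref{remreffun}. First I would record that, since $\Lambda$ is finite-dimensional and hence (left) perfect (as noted in the proof of Corollary~\ref{isosreflect}), the radical $\rad(M^{i})$ is superfluous in the projective module $M^{i}$; thus $\theta^{i}$ is surjective as soon as $\im(\theta^{i})+\rad(M^{i})=M^{i}$. Splitting along the vertex idempotents, it suffices to prove $e_{v}M^{r}=e_{v}\im(\theta^{r})+e_{v}\rad(M^{r})$ for every vertex $v$ and every $r$. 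Assume this fails, so there are $v$, $r$ and $m\in e_{v}M^{r}$ with $m\notin U$, where $U=e_{v}\im(\theta^{r})+e_{v}\rad(M^{r})$; by construction $e_{v}\rad(M^{r})\subseteq U$, which is exactly the hypothesis needed to invoke the covering lemma.

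Next I would fix $\delta=1$ and apply Lemma~\ref{lemma.5.2-1} to $U$ and $m$ (legitimate since $M$ is $\Sigma$-pure-injective), obtaining $B\in\mathcal{W}_{v,\delta}$ and $D\in\mathcal{W}_{v,-\delta}$ such that $U+m$ meets $G_{B,D,r}^{+}(M)$ but \emph{not} $G_{B,D,r}^{-}(M)$. Choosing $y\in(U+m)\cap G_{B,D,r}^{+}(M)$, its class $\bar{y}$ in $G_{B,D,r}(M)$ is defined. By Remark~\ref{remreffun} the functors $F_{B,D,r}$ and $G_{B,D,r}$ are naturally isomorphic, so the assumption that $F_{B,D,r}(\theta)$ is surjective (applicable since $(B,D,r)\in\Sigma$) forces $G_{B,D,r}(\theta)$ to be surjective as well. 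Hence there is $z\in G_{B,D,r}^{+}(P)$ with $\bar{y}=G_{B,D,r}(\theta)(\bar{z})$. Because $G_{B,D,r}$ is a subquotient of the forgetful functor, the map $G_{B,D,r}(\theta)$ is induced by restriction of $\theta^{r}$, so this equality unwinds to $y-\theta^{r}(z)\in G_{B,D,r}^{-}(M)$.

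The contradiction is then immediate: as $z\in G_{B,D,r}^{+}(P)\subseteq e_{v}P^{r}$ we have $\theta^{r}(z)\in e_{v}\im(\theta^{r})\subseteq U$, and since $U$ is a $k$-subspace with $y\in U+m$, it follows that $y-\theta^{r}(z)\in U+m$; thus $U+m$ meets $G_{B,D,r}^{-}(M)$, contradicting Lemma~\ref{lemma.5.2-1}. So no such $m$ exists, each $\theta^{r}$ is surjective modulo the radical, and the opening reduction finishes the proof. I expect the genuine obstacle to be conceptual rather than computational: one is tempted to try to show $G_{B,D,r}^{-}(M)\subseteq U$ directly, which is false in general because the ``$-$''-functors need not land in $\rad(M)$. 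The correct move is to exploit the \emph{full} strength of Lemma~\ref{lemma.5.2-1} --- that the entire coset $U+m$ avoids $G_{B,D,r}^{-}(M)$ --- together with the two bookkeeping facts that $\theta^{r}(z)\in U$ and that the induced map on the subquotient $G_{B,D,r}$ is literally restriction of $\theta^{r}$; once these are in place the argument closes at once.
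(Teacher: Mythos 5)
Your proposal is correct and follows essentially the same route as the paper: reduce to surjectivity modulo the radical via perfectness of $\Lambda$, apply Lemma~\ref{lemma.5.2-1} to a subspace $U$ containing $e_{v}\mathrm{im}(\theta^{r})+e_{v}\mathrm{rad}(M^{r})$, and contradict the avoidance of $G_{B,D,r}^{-}(M)$ by the coset $U+m$. The only difference is that you spell out explicitly the final step (lifting $\bar{y}$ through the surjection $G_{B,D,r}(\theta)$ and noting $\theta^{r}(z)\in U$), which the paper instead defers to an adaptation of \cite[Lemma 10.6]{Cra2018}; your completion of that step is sound.
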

\begin{proof}
For a contradiction suppose that $\theta^{i}$ is not surjective for
some $i\in\mathbb{Z}$. Since $\Lambda$ is perfect $M^{i}$ is a projective cover of $M/\mathrm{rad}(M^{i})$, and so $\mbox{rad}(M^{i})$ is a superfluous
submodule of $M^{i}$. This means $e_{v}\mbox{im}(\theta^{i})+e_{v}\mbox{rad}(M^{i})$ is contained in a maximal $k$-subspace $U$ of $e_{v}M^{i}$. Since
$e_{v}\mbox{rad}(M^{i})\subseteq U$ and $U\neq e_{v}M^{i}$, by Lemma \ref{lemma.5.2-1}(ii)
for some element $m\in e_{v}M^{i}\setminus U$ there are homotopy words $B\in\mathcal{W}_{v,\delta}$
and $D\in\mathcal{W}_{v,-\delta}$ for which ($B^{-1}D$ is a homotopy word)
and $U+m$ meets $G_{B,D,i}^{+}(M)$ but not $G_{B,D,i}^{-}(M)$. From here one can show $F_{B,D,i}(\theta)$ is not surjective by adapting the argument from the proof of \cite[Lemma 10.6]{Cra2018}.
\end{proof}

\begin{lemma}\emph{\cite[Lemma 2.5.6]{Ben2018} (}see also \emph{\cite[Lemma 9.4]{Cra2018})}.\label{lemma.7.1-1} Let $N$ be a coproduct of shifts of string and band complexes. Let $\theta:N\rightarrow M$ be a morphism in $\mathcal{K}_{\mathrm{min}}(\Lambda\text{-}\boldsymbol{\mathrm{Proj}})$
where $\bar{F}_{B,D,n}(\theta)$ is injective for all $(B,D,n)\in\mathcal{I}$.
Then each $\theta^{i}$ is injective.
\end{lemma}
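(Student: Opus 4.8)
The plan is to prove the contrapositive: assuming some $\theta^i$ fails to be injective, I will produce a triple $(B,D,n)\in\mathcal I$ for which $\bar F_{B,D,n}(\theta)$ is not injective. By Corollary \ref{isosreflect} I may replace $\theta$ by an honest chain map in $\mathcal C_{\mathrm{min}}(\Lambda\text{-}\mathbf{Proj})$, so each $\theta^i\colon N^i\to M^i$ is a genuine $\Lambda$-module map and $\ker\theta^i$ is a submodule. If $\ker\theta^i\neq 0$ then, applying the idempotents $e_v$, there is a vertex $v$ and a nonzero homogeneous $x\in e_v N^i\cap\ker\theta^i$. Writing $N=\bigoplus_s N_s$ as a coproduct of shifts of string and band complexes, I use that $F_{B,D,n}$ distributes over this coproduct: the pieces $B^{\pm}(-),D^{\pm}(-)$ preserve coproducts by Remark \ref{rem6.10}(iii), and since for aligned direct-sum decompositions intersections and sums of subspaces distribute, one gets $F^{\pm}_{B,D,n}(N)=\bigoplus_s F^{\pm}_{B,D,n}(N_s)$, hence $F_{B,D,n}(N)=\bigoplus_s F_{B,D,n}(N_s)$ compatibly with $\theta$. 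This reduces the detection of $x$ to the summand level.

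The heart of the argument is a covering statement for $N$: every nonzero homogeneous $x\in e_vN^i$ determines a nonzero class in $F_{B',D',n'}(N)$ for some triple with $n'=i$. For this I would invoke the explicit evaluations of Lemma \ref{usefulprops}(iia),(iiia): on a shift of a string complex the refined quotient $F_{B',D',n'}$ is one-dimensional, spanned by the class of the corresponding canonical basis vector $b_{j}$ and zero for all non-matching triples, while on a shift of a band complex $P(C,V)$ it is a copy of $V$. Thus the refined functors are triangular with respect to the canonical bases of the summands and the natural preorder on detecting triples, and their associated graded exhausts $e_vN^i$. Choosing, among the finitely many basis vectors in the support of $x$, one whose detecting triple $(B',D',n')$ is maximal in this preorder, all remaining terms of $x$ lie in $F^{-}_{B',D',n'}(N)$ whereas $x\in F^{+}_{B',D',n'}(N)$, so the class $[x]$ is nonzero in $F_{B',D',n'}(N)$ (with nonzero leading coefficient in $V$ in the band case). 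Passing to the representative $(B,D,n)\in\mathcal I$ of $(B',D',n')$ under $\sim$ and applying the natural isomorphisms of Lemma \ref{usefulprops}(i) (inserting $\mathrm{res}$ when $C'=C^{-1}[t]$) yields a nonzero element of $\bar F_{B,D,n}(N)$. Finally, because $\theta$ is a natural transformation it carries $F^{\pm}$ into $F^{\pm}$, and $\theta(x)=0$ forces $\bar F_{B,D,n}(\theta)([x])=[\theta(x)]=0$; as $[x]\neq 0$ this contradicts injectivity of $\bar F_{B,D,n}(\theta)$.

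The main obstacle is the covering/triangularity step, namely verifying that the leading term genuinely survives in the quotient $F^{+}_{B',D',n'}(N)/F^{-}_{B',D',n'}(N)$: one must show that the maximal detecting triple places $x$ inside $F^{+}$ while every strictly smaller term falls into $F^{-}$, which rests on the lattice inclusions recorded just before Definition \ref{definonesidedfunctors} together with the coproduct-compatible computation above. A subtlety is that the covering (cf.\ Lemma \ref{lemma.5.3}(ii)) may produce an infinite $\mathbb N$- or $\mathbb Z$-word rather than a finite one, so the surviving triple need not be finite; this is harmless since $\mathcal I$ already contains representatives indexed by infinite aperiodic and periodic words, but one must check the detecting triple does land in the expected equivalence class and, for band summands, that the leading coefficient is tracked correctly through the $k[T,T^{-1}]$-action on $E_M(n)^{\sharp}/E_M(n)^{\flat}$ from Definition \ref{Polaroid}. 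Modulo these compatibilities—which are exactly what is adapted from \cite[Lemma 9.4]{Cra2018} and \cite[Lemma 2.5.6]{Ben2018}—the argument closes. Note that, in contrast to the dual Lemma \ref{lemma.7.5}, no pure-injectivity hypothesis on $M$ is needed here, since it is the source $N$ that is covered, and $N$ is built from string and band complexes whose refined functors are known explicitly.
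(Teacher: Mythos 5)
There is a genuine gap, and it sits exactly at the step you yourself flag as the heart of the argument. Your covering claim --- that every nonzero homogeneous $x\in e_vN^i$ has a nonzero class in some $F_{B',D',n'}(N)$, i.e.\ that the associated graded of the refined functors exhausts $e_vN^i$ --- is false in this setting. Unlike the module case of \cite[Lemma 9.4]{Cra2018}, where a string module has $k$-basis $\{b_j\}$ and every basis vector is detected by the triple splitting the word at $j$, here the homogeneous components of a string or band complex are \emph{free} modules $\bigoplus_j\Lambda e_{v_C(j)}$. By Lemma \ref{usefulprops}(iia),(iiia) the refined functors contribute exactly one copy of $k$ (resp.\ of $V$) per position $j$, so $\sum_{(B',D',n')}\dim_kF_{B',D',n'}(P(C))$ equals the number of positions, whereas $\dim_ke_vP^n(C)$ also counts all the radical elements $\gamma b_j$ with $\gamma$ a nontrivial path. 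Such an element lies in $F^{-}_{B',D',n'}(N)$ for the matching triple and has zero image under every other refined functor, so it has \emph{no} nonzero leading term anywhere. Consequently, if the nonzero kernel element $x$ of $\theta^i$ lies in $\mathrm{rad}(N^i)$ --- which is entirely possible, since $\theta^i(\gamma b_j)=\gamma\theta^i(b_j)=0$ does not force $\theta^i(b_j)=0$ --- your mechanism produces no contradiction at all.

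The repair is to change where the detection happens. Either (a) show that the hypothesis forces the induced map on tops $N^i/\mathrm{rad}(N^i)\to M^i/\mathrm{rad}(M^i)$ to be injective --- the generators $b_j$ \emph{do} exhaust the top, and one must then relate $e_v\mathrm{rad}(M^i)$ to the subspaces $F^{-}_{B,D,i}(M)$ --- and conclude that $\theta^i$ is a split monomorphism of projectives because $\Lambda$ is semiperfect; or (b) transport the vanishing into the \emph{target}: from $\gamma\theta(b_j)=0$ one deduces that $\theta(b_j)$ falls into a strictly deeper filtration subspace of $M$ (e.g.\ into $D^{-}(M)$ via the inclusions preceding Definition \ref{definonesidedfunctors}), so $[\theta(b_j)]=0$ in $F_{B,D,i}(M)$ while $[b_j]\neq0$ in $F_{B,D,i}(N)$, contradicting injectivity of $F_{B,D,i}(\theta)$. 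This is the mechanism of \cite[Lemma 2.5.6]{Ben2018}; the rest of your outline (reduction to a genuine chain map via Corollary \ref{isosreflect}, distribution of the functors over the coproduct via Remark \ref{rem6.10}, passage to representatives in $\mathcal{I}$ via Lemma \ref{usefulprops}(i), and the observation that no pure-injectivity of $M$ is needed) is fine.
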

\section{\label{sec:Proofs-of-the}Completing the proof of the main theorem.}
We now combine what we have found so far. The proof of Theorem \ref{maincor} below shows how the functorial filtrations method works.
\begin{proof}[of Theorem \ref{maincor}]
Let $M$ be a $\Sigma$-pure-injective object of $\mathcal{K}_{\mathrm{min}}(\Lambda\text{-}\boldsymbol{\mathrm{Proj}})$. Recall, from Definition \ref{definition.7.14}, the equivalence relation on the triples $(B,D,n)$ where $B^{-1}D$ is a homotopy word and $n\in\mathbb{Z}$. Recall that $\mathcal{I}=\mathcal{I}(s)\sqcup \mathcal{I}(b)$ where $\mathcal{I}(s)$ (respectively $\mathcal{I}(b)$) denotes a chosen set of representatives $(B,D,n)$ such that $B^{-1}D$ is aperiodic (respectively periodic). Recall that if $(B,D,n)$ lies in $\mathcal{I}(s)$ (respectively $\mathcal{I}(b)$) then the functor $F_{B,D,n}$ has the form $\mathcal{K}_{\mathrm{min}}(\Lambda\text{-}\mathbf{Proj})\rightarrow k\text{-}\mathbf{Mod}$ (respectively $\mathcal{K}_{\mathrm{min}}(\Lambda\text{-}\mathbf{Proj})\rightarrow k[T,T^{-1}]\text{-}\mathbf{Mod}$).

By Lemma \ref{usefulprops}(iib), if $(B,D,n)\in\mathcal{I}(s)$ then for any basis $\mathcal{B}$ of $F_{B,D,n}(M)$ there is a morphism $\theta^{s}_{B,D,n,M}:\bigoplus P(C)[\mu_{C}(a_{B,D})-n]\to M$ where (the coproduct runs through $\mathcal{B}$ and) $F_{B,D,n}(\theta_{B,D,n,M})$ is an isomorphism. Now suppose $(B,D,n)\in\mathcal{I}(b)$ with $B^{-1}D={}^{\infty}E{}^{\,\infty}$ in the notation of Definition \ref{Polaroid}. By Lemma \ref{cmpctgen} the object $(e_{v}M^n, E_{M}(n))$ of $k\text{-}\textbf{Rel}$ is $\Sigma$-pure-injective. By Corollary \ref{meandbill} (and Remark \ref{remreffun}) this means $(e_{v}M^n, E_{M}(n))$ is split and $V=F_{B,D,n}(M)$ is a $\Sigma$-pure-injective $k[T,T^{-1}]$-module. By Lemma \ref{usefulprops}(iic) this means there is a morphism $\theta^{b}_{B,D,n,M}:P(C,V)[-n]\to M$ where $F_{B,D,n}(\theta_{B,D,n,M})$ is an isomorphism. 

By the universal property of the coproduct, the morphisms $\theta^{s}_{B,D,n,M}$ and $\theta^{b}_{B,D,n,M}$ together define a morphism $\theta:N\to M$ where $N$ is a coproduct of shifts of string complexes and shifts of band complexes which are paramenterised by $\Sigma$-pure-injective $k[T,T^{-1}]$-modules. Furthermore, by construction we have that $F_{B,D,n}(\theta)$ is an isomorphism for each $(B,D,n)\in\mathcal{I}$. By Lemma \ref{usefulprops}(i), (iia) and (iiia) this means that $F_{B,D,n}(\theta)$ is an isomorphism for each $(B,D,n)\in\Sigma$. 

Since $N$ is a coproduct of string and band complexes, by Lemma \ref{lemma.7.1-1} this means $\theta^{i}$ is injective for each $i\in\mathbb{Z}$. Similarly, since $M$ is a $\Sigma$-pure-injective object of $\mathcal{K}_{\mathrm{min}}(\Lambda\text{-}\boldsymbol{\mathrm{Proj}})$, by Lemma \ref{lemma.7.5} $\theta^{i}$ is surjective for each $i\in\mathbb{Z}$. 
\end{proof}
\begin{conjecture}The author conjectures that Theorem \ref{maincor} can be generalised by extending the scope of the classification, from $\Sigma$-pure-injective objects to all pure-injectives. 
\end{conjecture}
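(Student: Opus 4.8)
The plan is to re-run the functorial filtrations machinery of \S\ref{6}--\S\ref{seccover} with $\Sigma$-pure-injectivity of $M$ weakened to pure-injectivity throughout, and to locate precisely the two points in the proof of Theorem \ref{maincor} where $\Sigma$-pure-injectivity was used essentially rather than cosmetically. Those two points are: \textbf{(a)} Corollary \ref{coolbeans}, where the descending chain condition on pp-definable subgroups (the equivalence of (i) and (iii) in Theorem \ref{characterisation}) collapses the one-sided functor $C^{+}(M)$ attached to an infinite homotopy word to a finite truncation $C_{\leq l}M$; and \textbf{(b)} the implication Corollary \ref{meandbill} followed by Lemma \ref{cmpctgen}, where $\Sigma$-pure-injectivity of $M$ is transported to $\Sigma$-pure-injectivity of the Kronecker module $(e_{v}M^{n},E_{M}(n))$, forcing the relation to split and the band parameter $F_{B,D,n}(M)$ to be a $\Sigma$-pure-injective $k[T,T^{-1}]$-module. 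Everything else (the construction of $\theta\colon N\to M$ and the surjectivity/injectivity conclusions of Lemmas \ref{lemma.7.5} and \ref{lemma.7.1-1}) is insensitive to the distinction once these two inputs are replaced.

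For \textbf{(a)} I would replace the chain-stabilisation argument by \emph{algebraic compactness}: a pure-injective object is exactly one for which every finitely solvable pp-system with parameters has a solution, so rather than truncating $C^{+}(M)$ to some $C_{\leq l}M$ I would realise an element of $C^{+}(M)$ directly as a solution of the infinite, finitely-solvable system whose $l$-th truncation is met by $C_{\leq l}M$. Concretely this means re-proving the covering Lemmas \ref{lemma.5.3} and \ref{lemma.5.2-1} for pure-injective $M$, deducing that $U+m$ meets $G^{+}_{B,D,r}(M)$ but not $G^{-}_{B,D,r}(M)$ from finite solvability at each stage together with pure-injectivity, thereby bypassing the use of Corollary \ref{coolbeans} altogether.

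For \textbf{(b)}, which is the genuinely hard part, I would need the pure-injective analogue of Corollary \ref{meandbill}: that a pure-injective object of $k\text{-}\mathbf{Rel}$ (equivalently a pure-injective Kronecker module lying in $k\Gamma\text{-}\mathbf{Mod}_{\mathrm{rel}}$) is \emph{split} in the sense of Definition \ref{defsplit}, with $\mathscr{R}^{\sharp}/\mathscr{R}^{\flat}$ a pure-injective (no longer $\Sigma$-pure-injective) $k[T,T^{-1}]$-module. Here one must engage the full classification of pure-injective modules over the tame hereditary Kronecker algebra, including the adic, Pr\"ufer and generic points of its Ziegler spectrum that have no $\Sigma$-pure-injective counterpart; the resulting band complexes would be parameterised by arbitrary indecomposable pure-injective $k[T,T^{-1}]$-modules, and one must verify that each such $P(C,V)$ remains pure-injective and indecomposable in $\mathcal{K}(\Lambda\text{-}\mathbf{Proj})$. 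The splitting statement itself is of the type handled by Crawley-Boevey \cite{Cra2018} for pure-injective string modules, so I expect the reduction theory of relations there to supply, after adaptation, the splitting needed in Lemma \ref{usefulprops}(iiib).

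The step I expect to be the main obstacle is the transfer of pure-injectivity from $M$ to the relation $(e_{v}M^{n},E_{M}(n))$. The proof of Lemma \ref{cmpctgen} worked by bounding the cardinality of the indecomposable summands of the product $M^{I}$ (via Corollary \ref{sigmacardy} and Lemma \ref{cardylemma}) and invoking the characterisation of $\Sigma$-pure-injectivity by such a uniform cardinality bound; this is exactly the feature that $\Sigma$-pure-injectivity guarantees and that bare pure-injectivity lacks. Thus one cannot simply read off pure-injectivity of $E_{M}(n)$ from pure-injectivity of $M$, and I would instead try to argue functorially: show that the relation-valued functor $M\mapsto(e_{v}M^{n},E_{M}(n))$ sends pure monomorphisms in $\mathcal{K}_{\mathrm{min}}(\Lambda\text{-}\mathbf{Proj})$ to pure monomorphisms in $k\text{-}\mathbf{Rel}$ (using Remark \ref{rem6.10}), and is suitably continuous for products, so that pure-injectivity of $M$ passes to $E_{M}(n)$ directly. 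Establishing this continuity, together with the pure-injective splitting theorem for relations, is where the substantive new work lies.
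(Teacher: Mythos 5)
The statement you are addressing is presented in the paper as an open conjecture: the paper contains no proof of it, so there is nothing to compare your submission against, and what you have written is a research programme rather than a proof. You have correctly located the two places where $\Sigma$-pure-injectivity enters essentially (Corollary \ref{coolbeans}, via the descending chain condition of Theorem \ref{characterisation}(iii), and the passage Lemma \ref{cmpctgen} together with Corollary \ref{meandbill} for the band data), but neither replacement you propose is carried out, and both conceal genuine obstructions.

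For your step (a): algebraic compactness produces solutions of finitely solvable pp-systems, but the covering argument needs more than the nonemptiness of $C^{+}(M)=\bigcap_{l}C_{\leq l}M$; it needs the coset $U+m$ to meet this intersection given only that it meets each $C_{\leq l}M$, i.e.\ the realisation of a pp-type inside a prescribed coset. Without the chain condition this is a nontrivial compactness argument that you have not supplied. For your step (b): the functorial transfer you sketch does not work as stated, because preservation of pure monomorphisms by $M\mapsto(e_{v}M^{n},E_{M}(n))$ would not transfer pure-injectivity — not every pure monomorphism out of $(e_{v}M^{n},E_{M}(n))$ arises from one out of $M$. The natural alternative is the summation/product factorisation characterisation, but $M\mapsto(e_{v}M^{n},E_{M}(n))$ is a functor on $\mathcal{C}_{\mathrm{min}}(\Lambda\text{-}\mathbf{Proj})$ and not obviously on $\mathcal{K}_{\mathrm{min}}(\Lambda\text{-}\mathbf{Proj})$, where the factorisation of $\sigma_{I}$ through $\iota_{I}$ lives, so one must control the effect of the functor on null-homotopic maps. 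Worse, even granting the transfer, the splitting theorem you invoke — that every \emph{pure-injective} object of $k\text{-}\mathbf{Rel}$ is split with $\mathscr{R}^{\sharp}/\mathscr{R}^{\flat}$ pure-injective — is precisely what is not known and is essentially the content of the conjecture: \cite{BenCra2018} proves it only in the $\Sigma$-pure-injective case, and in the plain pure-injective case new Ziegler-spectrum points (adic, generic and limit-type objects, as in \cite{PrePun2016}) appear. This also forces a reformulation of the target statement itself, since a general pure-injective object need not decompose as a coproduct of indecomposables at all; compare Theorem \ref{characterisation}(iv), which is special to the $\Sigma$-pure-injective case. Your proposal is a reasonable map of the difficulties, but it does not constitute a proof.
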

\begin{remark}\label{lastremark}
Recall that (shifts of) string and band complexes are indecomposable objects in $\mathcal{K}_{\mathrm{min}}(\Lambda\text{-}\boldsymbol{\mathrm{Proj}})$ by Theorem \ref{theorem.1.1}(ii). Hence, by Corollary \ref{decompcorr}, if two coproducts of shifts of string and band complexes are both isomorphic and $\Sigma$-pure-injective, then there is an isoclass preserving  bijection between the summands.
\end{remark}
By combining Theorems \ref{maincor} and \ref{theorem.1.2} with Remark \ref{lastremark}, one can determine exactly when two $\Sigma$-pure-injective objects in the homotopy category are isomorphic. Note that any indecomposable $\Sigma$-pure-injective $k[T,T^{-1}]$-module is isomorphic to: an indecomposable finite-dimensional module;
a \textit{Pr\"ufer} module (an injective envelope of a simple); or the function field $k(T)$. We now note how the decomposition of any $\Sigma$-pure-injective object of $\mathcal{K}_{\mathrm{min}}(\Lambda\text{-}\boldsymbol{\mathrm{Proj}})$, as in Theorem \ref{maincor}, is unique up to reordering and isomorphism. 
\begin{theorem}
\label{theorem.1.2}\emph{\cite[Theorem 1.2]{Ben2016}} Let $C$ and $E$ be homotopy words, let $V$ and $W$ be $k[T,T^{-1}]$-modules  and let $n\in\mathbb{Z}$. 
\begin{enumerate}
\item If $C$ and $E$ are aperiodic, then $P(C)[n]\simeq P(E)$ in $\mathcal{K}(\Lambda\text{-}\boldsymbol{\mathrm{Proj}})$ if and only if:
\begin{enumerate}
\item we have  $I_{C}=\{0,\dots,m\}$ and $(I_{E},E,n)=(I_{C},C,0)\text{ or } (I_{C},C^{-1},\mu_{C}(m))$; or
\item we have $I_{C}=\pm\mathbb{N}$ and $(I_{E},E,n)=(\pm\mathbb{N},C,0)\text{ or }(\mp\mathbb{N},C^{-1},0)$; or
\item we have $I_{C}=\mathbb{Z}$ and  $(I_{E},E,n)=(\mathbb{Z}, C^{\pm1}[t],\mu_{C}(\pm t))$ for some $t\in\mathbb{Z}$.
\end{enumerate}
\item If $C$ and $E$ are periodic, then $P(C,V)[n]\simeq P(E,W)$ in $\mathcal{K}(\Lambda\text{-}\boldsymbol{\mathrm{Proj}})$ if and only if:
\begin{enumerate}
\item we have $E=C[t]$, $V\simeq W$ and $n=\mu_{C}(t)$ for some $t\in\mathbb{Z}$; or
\item we have $E=C^{-1}[t]$, $V\simeq \mathrm{res}\,W$ and $n=\mu_{C}(-t)$ for some $t\in\mathbb{Z}$.
\end{enumerate}
\item If $C$ is aperiodic and $E$ is periodic, then $P(C)[n]\not\simeq P(E,V)$ in $\mathcal{K}(\Lambda\text{-}\boldsymbol{\mathrm{Proj}})$.
\end{enumerate}
\end{theorem}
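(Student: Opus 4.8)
The plan is to treat the refined functors $F_{B,D,n}$ from Definition \ref{refined} as a complete system of isomorphism invariants, exactly as they drive the proof of Theorem \ref{maincor}. By Remark \ref{remreffun} each $F_{B,D,n}$ is an additive functor out of $\mathcal{K}_{\mathrm{min}}(\Lambda\text{-}\mathbf{Proj})$, landing in $k\text{-}\mathbf{Mod}$ when $B^{-1}D$ is aperiodic and in $k[T,T^{-1}]\text{-}\mathbf{Mod}$ when it is periodic. Since string and band complexes are homotopically minimal, any isomorphism $P(C)[n]\simeq P(E)$ lives in $\mathcal{K}_{\mathrm{min}}(\Lambda\text{-}\mathbf{Proj})$, so by Corollary \ref{isosreflect} it may be represented by minimal complexes and hence induces isomorphisms $F_{B,D,m}(P(C)[n])\cong F_{B,D,m}(P(E))$ for every triple $(B,D,m)$. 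First I would record how these functors interact with the suspension, so that evaluating on the shift $P(C)[n]$ reduces to the computations of Lemma \ref{usefulprops}(iia) and (iiia) on $P(C)$ itself, via the degree correction $\mu_{C}(a_{B,D})-n$ already built into those statements.

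The forward implications then follow from the ``support'' computations. By Lemma \ref{usefulprops}(iia), for aperiodic $C$ the value $F_{B',D',n'}(P(C)[\mu_{C}(a_{B,D})-n])$ is one-dimensional when $(B,D,n)\sim(B',D',n')$ and zero otherwise; dually Lemma \ref{usefulprops}(iiia) shows that for periodic $C={}^{\infty}E{}^{\,\infty}$ it is $V$ on the matching class and zero elsewhere. Thus the family $\{F_{B,D,m}\}$ detects exactly one $\sim$-class per indecomposable, and an isomorphism forces the two detected classes to coincide. It remains to unwind the equivalence relation of Definition \ref{definition.7.14}: using the axis formulas $a_{D,B}=m-a_{B,D}$, $a_{D,B}=-a_{B,D}$ and $a_{B,D}=0$ for $I_{C}=\{0,\dots,m\}$, $\pm\mathbb{N}$ and $\mathbb{Z}$ respectively, the condition $C\sim C'$ together with the degree-matching equation unpacks into precisely cases (a), (b), (c) of part (i) and cases (a), (b) of part (ii), the involution $\mathrm{res}$ appearing exactly when $C'=C^{-1}[t]$ by Lemma \ref{usefulprops}(ic).

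For the converse I would exhibit explicit isomorphisms of complexes witnessing each listed condition. Translating a periodic or bi-infinite word by $t$ while shifting the homogeneous grading by $\mu_{C}(t)$ yields a canonical isomorphism $P(C)[\mu_{C}(t)]\simeq P(C[t])$ straight from the definition of $P(C)$; reversing a word via $[C]\mapsto[C^{-1}]$ and shifting by the appropriate $\mu_{C}$-value produces the isomorphism for the inverse cases, and in the periodic case this reversal is what replaces $V$ by $\mathrm{res}\,V$, since it swaps the roles of the two projections defining the $T$-action in Definition \ref{Polaroid}. Part (iii) I would obtain by observing that the periodic-type functors vanish on aperiodic complexes and conversely, so no single $\sim$-class can be detected by invariants of both types; equivalently the two families are separated by their target category, $k\text{-}\mathbf{Mod}$ versus $k[T,T^{-1}]\text{-}\mathbf{Mod}$.

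The hard part will be the careful bookkeeping in the converse direction rather than any conceptual difficulty: one must check that the degree shifts $\mu_{C}(t)$ and the axis corrections match across the three shapes of $I_{C}$, and that the word-reversal isomorphism induces precisely the $\mathrm{res}$ involution on the parametrising $k[T,T^{-1}]$-module with no stray shift. Pinning down the vanishing of the cross-type functors needed for part (iii) is the other delicate point, although I expect it to drop out of the same evaluation lemmas once the support computations above are in place.
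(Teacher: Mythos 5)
The paper does not actually prove Theorem \ref{theorem.1.2}; it imports it verbatim from \cite[Theorem 1.2]{Ben2016}, and the machinery you invoke --- the support and value computations of Lemma \ref{usefulprops}(iia) and (iiia), the equivalence relation $\sim$ of Definition \ref{definition.7.14}, and the explicit translation/reversal isomorphisms $P(C[t])\simeq P(C)[\mu_{C}(t)]$ and $P(C^{-1})\simeq P(C)[\mu_{C}(m)]$ with $\mathrm{res}$ arising from reversing the $T$-translation --- is precisely the functorial-filtrations apparatus by which that cited source establishes the result. Your proposal is therefore correct and takes essentially the same approach as the (cited) proof, with only the acknowledged degree-shift bookkeeping left to carry out.
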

\bibliography{biblio}
\bibliographystyle{plain}

\end{document}